\documentclass[a4, 11pt]{amsart}
\usepackage[T1]{fontenc}
\usepackage{amssymb,latexsym}
\usepackage{color}
\usepackage{graphicx}
\usepackage{enumitem}
\usepackage[left=35mm, right=35mm, top=30mm, bottom=30mm, includefoot, includehead]{geometry}
\theoremstyle{plain}
\newtheorem{theorem}{Theorem}[section]
\newtheorem{corollary}[theorem]{Corollary}
\newtheorem{proposition}[theorem]{Proposition}
\newtheorem{lemma}[theorem]{Lemma}
\theoremstyle{definition}
\newtheorem{definition}[theorem]{Definition}
\newtheorem{example}[theorem]{Example}

\newtheorem{remark}[theorem]{Remark}
\newtheorem{question}[theorem]{Question}
\newtheorem*{theo}{Theorem}


\newcommand{\enm}[1]{\ensuremath{#1}}          %

\newcommand{\CC}{\enm{\mathbb{C}}}

\newcommand{\NN}{\enm{\mathbb{N}}}

\newcommand{\PP}{\enm{\mathbb{P}}}

\newcommand{\Spec}{\mathrm{Spec}}

\newcommand{\DS}{\displaystyle}

\renewcommand{\phi}{\varphi}
\renewcommand{\theta}{\vartheta}
\renewcommand{\epsilon}{\varepsilon}


      %

\renewcommand{\to}[1][]{\xrightarrow{\ #1\ }}







\newcommand{\old}[1]{}


\date{}
\title[]{Vanishing Hessian, wild forms and their border VSP}
\author{Hang (Amy) Huang, Mateusz Micha\l{}ek, and Emanuele Ventura}

\address{Dept. of Mathematics, Texas A\&M University,
College Station, TX 77843-3368, USA}
\email{hhuang@math.tamu.edu}

\address{Max Planck Institute for Mathematics in the Sciences, Leipzig, Germany}
\email{mateusz.michalek@mis.mpg.de}

\address{University of Bern, Mathematical Institute, Sidlerstrasse 5, 3012 Bern, Switzerland}
\email{emanueleventura.sw@gmail.com, emanuele.ventura@math.unibe.ch}

\keywords{Wild forms, Vanishing Hessian, Zero-dimensional schemes, Border VSP, Smoothable algebras, Gorenstein algebras, Structure tensors.}
\subjclass[2010]{(Primary) 14C05; (Secondary) 15A69.}

\begin{document}

\maketitle

\begin{abstract}
Wild forms are homogeneous polynomials whose smoothable rank is strictly larger than their border rank. The discrepancy between these two ranks is caused by the difference between the limit of spans of a family of zero-dimensional schemes  and the span of their flat limit. For concise forms of minimal border rank, we show that the condition of vanishing Hessian is equivalent to being wild. This is proven by making a detour through structure tensors of smoothable and Gorenstein algebras. The equivalence fails in the non-minimal border rank regime. We exhibit an infinite series of minimal border rank wild forms of every degree $d\geq 3$ as well as an infinite series of wild cubics. Inspired by recent work on border apolarity of Buczy\'nska and Buczy\'nski, we study the border varieties of sums of powers $\underline{\mathrm{VSP}}$ of these forms in the corresponding multigraded Hilbert schemes. 
\end{abstract}

\section{Introduction}

Notions of ranks abound in the literature, perhaps because of their natural appearance in the realms of algebra and geometry, and in numerous applications thereof;
see \cite{ik, Lands} and references therein for an introduction to the subject.

These ranks vastly generalize matrix rank and yet they are very classical, dating back to the pioneering work of Sylvester. His work featured Waring ranks of binary forms; see \cite{ik} for a historical account on the subject. Since then, ever growing research efforts have been devoted to understanding ranks with respect to some special projective varieties $X$ of interest. Last decades have witnessed steady progress on tensor and Waring ranks, i.e. the cases when
the projective varieties are the classical Segre and Veronese varieties. 

These results have been developed in parallel in their geometric and algebraic aspects. The first are naturally related to secant varieties of $X$ \cite[Chapter 1]{russo}, whereas the second to Macaulay's theory of apolarity and inverse systems \cite[\S 1.1]{ik}. 

Interestingly, scheme-theoretic versions of $X$-ranks have been introduced and studied as well. These latter ones take into account more general zero-dimensional schemes, besides the reduced zero-dimensional ones featured in the $X$-ranks. This more general framework naturally leads to new notions of $X$-rank: the {\it smoothable $X$-rank}, and the {\it cactus $X$-rank}; the latter was originally called {\it scheme length} \cite[Definition 5.1]{ik}. We recall their definitions in \S\ref{prelim}. 

One subtle phenomenon is that, for special points, perhaps unexpectedly smoothable ranks may be larger than border ranks. This discrepancy is caused by the difference between the limit of spans of a family of zero-dimensional schemes and the span of their flat limit. The difference between smoothable and border ranks does not appear for general points (forms) of fixed border rank. Therefore, it is a natural and interesting problem to investigate the structure of the instances where these two differ.  

As far as we know, Buczy\'nska and Buczy\'nski \cite{bb15} were the first authors to bring the difference between these ranks to attention. They introduced the notion of {\it wild forms}, i.e. those whose smoothable rank is strictly larger than their border rank. They gave one such a form \cite[\S 4]{bb15}, up to concise minimal border rank direct summands. 

On another direction, in recent groundbreaking work, Buczy\'nska and Buczy\'nski \cite{bb19} expanded the apolarity theory of $X$-ranks to {\it border apolarity}, which is devised to provide information about border $X$-ranks. Along the way, they introduced the {\it border varieties of sums of powers} $\underline{\mathrm{VSP}}$, mirroring the classical varieties of sums of powers \cite{Mukai1, rs00}. 

Inspired by \cite[\S 5.3]{bb19}, we establish a new result on wild forms. To state it, let $V$ be a complex finite-dimensional vector space; given a form $F\in S^dV^{*}$, let $\mathrm{Hess}(F)$ denote the determinant of its Hessian matrix. Forms with identically vanishing Hessian have many remarkable geometric and algebraic properties; see \cite[Chapter 7]{russo} for a detailed and updated exposition. 

Forms with vanishing Hessian were originally studied by Hesse in two classical papers \cite{Hesse1859, Hesse1851}, where the author tried to prove that these homogeneous polynomials are necessarily not concise (or, in more geometric terms, that the hypersurfaces they define 
are cones). Thereafter, in their important work \cite{GN1876}, Gordan and Noether showed that Hesse's claim is true in the regime of at most four variables, whereas there exist infinitely many counterexamples afterwords. The easiest counterexample is perhaps the {\it Perazzo cubic hypersurface} \cite{perazzo} (in Perazzo's words ``{\it un esempio semplicissimo}'', i.e. ``a very easy example''), which appears in \cite[\S 4]{bb15} as an instance of wild cubic form: that is the point of departure of our article. Our main result is Theorem \ref{hessvswild}, which connects wild forms and vanishing Hessian, following the lines paved by Ottaviani's remark \cite[Remark 5.1]{bb19}. (For definitions of the ranks involved in the statement, see \S\ref{prelim}.)

\begin{theo}\emph{
Let $d\geq 3$ and $F\in S^d V^{*}$ be a concise form of minimal border rank. Then:
\[
\mathrm{Hess}(F) = 0 \ \ \Longleftrightarrow \mathrm{cr}(F) > \underline{{\bf r}}(F)  \ \ \Longleftrightarrow  \mathrm{sr}(F) > \underline{{\bf r}}(F) \ \ \Longleftrightarrow F  \mbox{ is wild}.
\]
}
\end{theo}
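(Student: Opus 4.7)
Write $n=\dim V$; by conciseness and minimal border rank, $\underline{{\bf r}}(F)=n$. The equivalence $\mathrm{sr}(F)>n \Longleftrightarrow F$ wild is the definition, and $\mathrm{cr}(F)>n \Rightarrow \mathrm{sr}(F)>n$ is immediate from $\mathrm{cr}\leq \mathrm{sr}$. Hence the content to establish is (a) $\mathrm{sr}(F)>n \Rightarrow \mathrm{cr}(F)>n$, equivalently $\mathrm{cr}(F)=n \Rightarrow \mathrm{sr}(F)=n$; and (b) the equivalence $\mathrm{Hess}(F)=0 \Longleftrightarrow \mathrm{cr}(F)>n$.

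The whole argument is run through the apolar algebra $A_F=S/\mathrm{Ann}(F)$, a graded Artinian Gorenstein algebra of socle degree $d$ with $h_{A_F}(1)=h_{A_F}(d-1)=n$. By the Hessian--Lefschetz criterion of Watanabe--Maeno--Ikeda (see \cite[Ch.~7]{russo}), up to a nonzero scalar one has
\[
\mathrm{Hess}(F)(\ell)\ =\ \det\bigl(\ell^{d-2}\cdot\,:\,(A_F)_1 \longrightarrow (A_F)_{d-1}\bigr),
\]
the determinant being computed against Gorenstein-dual bases. Thus $\mathrm{Hess}(F)\equiv 0$ is equivalent to failure of the strong Lefschetz property of $A_F$ in degree one, which recasts (b) as the statement that \emph{a length-$n$ apolar subscheme of $F$ exists if and only if $A_F$ satisfies SLP at degree one}.

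This is where the detour through structure tensors of smoothable and Gorenstein algebras does the work. Any length-$n$ apolar scheme $\Gamma\subset \PP(V^*)$ yields an $n$-dimensional Artinian algebra $B_\Gamma=\CC[\Gamma]$ whose structure tensor $\mu_\Gamma\in B_\Gamma^*\otimes B_\Gamma^*\otimes B_\Gamma$ has border rank at most $n$, with smoothability of $\Gamma$ being precisely that $\mu_\Gamma$ lies in the closure of the semisimple (diagonal) locus. Conciseness of $F$ forces the natural map $V \hookrightarrow B_\Gamma$ to be an isomorphism, and $F$ is recovered from $\mu_\Gamma$ by $d$-th powering composed with the Gorenstein/socle contraction. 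Following Ottaviani's observation \cite[Remark 5.1]{bb19}, the transported multiplication realises the Lefschetz pairing on $(A_F)_1$: nondegeneracy of the Gorenstein pairing on $B_\Gamma$, manifest in the semisimple case and preserved under any deformation witnessing smoothability, descends to SLP on $A_F$, hence $\mathrm{Hess}(F)\neq 0$. Conversely, starting from SLP at degree one I would reverse this construction, assembling the nondegenerate Lefschetz data into an $n$-dimensional commutative algebra structure on $V$ and an associated length-$n$ apolar scheme of $F$; by the same structure-tensor analysis this scheme is forced into the smoothable locus, which simultaneously settles (a) and the missing direction of (b).

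\textbf{Main obstacle.} The delicate step is this converse: upgrading the abstract linear-algebraic SLP condition on $A_F$ to a genuine zero-dimensional \emph{scheme} apolar to $F$, together with its smoothability certificate. Producing only a length-$n$ quotient \emph{algebra} is insufficient; the structure-tensor technology for smoothable versus Gorenstein algebras, announced in the abstract, is precisely the tool needed to place the resulting scheme in the smoothable component of the appropriate Hilbert scheme, and to certify that the limits of spans along its smoothing match the span of its flat limit.
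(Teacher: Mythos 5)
Your reduction of the four-way equivalence to (a) $\mathrm{cr}(F)=n\Rightarrow\mathrm{sr}(F)=n$ and (b) $\mathrm{Hess}(F)=0\Leftrightarrow\mathrm{cr}(F)>n$, and your translation of $\mathrm{Hess}(F)\neq 0$ into the strong Lefschetz property of $A_F$ at degree one, match the paper's architecture. But two steps do not hold as written. For the forward direction $\mathrm{cr}(F)=n\Rightarrow\mathrm{Hess}(F)\neq 0$, you invoke ``the Gorenstein/socle contraction'' and ``nondegeneracy of the Gorenstein pairing on $B_\Gamma$'', and speak of preservation ``under any deformation witnessing smoothability''. None of this is available: a length-$n$ apolar scheme $\Gamma$ witnessing $\mathrm{cr}(F)=n$ is a priori neither Gorenstein nor smoothable, so the pairing you want to descend does not exist to begin with. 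The paper (Theorem \ref{hessvswild}) proves this implication with no such hypotheses: via Proposition \ref{transcdeg} it shows that if the saturation of $\langle\mathrm{Ann}(F)_{d-1}\rangle$ contains no linear form then the partials of $F$ are algebraically independent, and then a Hilbert-function comparison forces a linear form into any saturated apolar ideal of degree at most $n$, contradicting conciseness. (A shorter route closer to your spirit also works and still needs nothing Gorenstein: since $\mathcal{I}_\Gamma$ is saturated, a generic $\ell\in T_1$ acts injectively on $T/\mathcal{I}_\Gamma$, hence bijectively in each degree, and the graded ring surjection $T/\mathcal{I}_\Gamma\twoheadrightarrow A_F$ is an isomorphism in degrees $1$ and $d-1$; therefore $\ell^{d-2}:(A_F)_1\to(A_F)_{d-1}$ is an isomorphism, i.e.\ $\mathrm{Hess}(F)\neq 0$.)

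The larger gap is the converse $\mathrm{Hess}(F)\neq 0\Rightarrow\mathrm{sr}(F)=n$, which you correctly flag as the delicate step but for which you offer only a description of what should happen, not an argument. The substance here is Proposition \ref{prop:Gor}: one must show that the $d$-way tensor $T_F$ of a concise, minimal-border-rank form with nonvanishing Hessian is the structure tensor of a smoothable Gorenstein algebra. This requires choosing a contraction that yields a full-rank symmetric matrix (Lemma \ref{lem:Hes1gen}), constructing the matrices $M_i$ in suitable dual bases, and verifying commutativity, closure under composition, and the structure property --- the last of which hinges on Strassen's commutator equations and the Landsberg--Micha\l{}ek results for $3$-way tensors of minimal border rank. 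Only then does the Bl\"aser--Lysikov theorem (Theorem \ref{blthm}) certify smoothability, and only then does the explicit span computation in the proof of Theorem \ref{maincubics} place $T_F$ in $\langle\nu_d(\Spec A)\rangle$ to conclude $\mathrm{sr}(F)=n$. As written, none of this chain is carried out, so the key implication of the theorem is missing from the proposal.
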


The equivalence fails when assuming non-minimal border rank: there exist concise wild forms of non-minimal border rank whose Hessian is not vanishing, see Example \ref{wildwithnozerohess}. 

Let $T = \bigoplus_{d\geq 0} S^d V$. Given $F\in S^d V^{*}$, let $\mathrm{Ann}(F)\subset T$ be the annihilator or apolar ideal of $F$, see \S\ref{prelim}. 
Let $A = T/\mathrm{Ann}(F)$ be the Artinian Gorenstein $\CC$-algebra of (a concise) $F$. Let $\lbrace \alpha_i^{(k)}\rbrace$ be a basis of $A_k$. Then the determinant
\[
\mathrm{Hess}^k(F) = \mathrm{det}\left(\alpha_i^{(k)}\alpha_j^{(k)} F\right)
\]
is the {\it $k$-th Hessian} of $F$. By definition, $\mathrm{Hess}^1(F) = \mathrm{Hess}(F)$. 

The {\it Strong Lefschetz Property} (SLP) of $A$ is characterized in terms of these higher Hessians of $F$: $A$ has $\mathrm{SLP}$ if and only if $\mathrm{Hess}^k(F)\neq 0$ for every $k=1,\ldots, \lfloor d/2\rfloor$ \cite[Theorem 3.1]{MW}.  Then the result above reads: 

\begin{theo}\emph{
Let $d\geq 3$ and let $F\in S^d V^{*}$ be a concise form of minimal border rank. Then:
\[
F  \mbox{ is wild} \ \ \Longrightarrow \ \ T/\mathrm{Ann}(F) \mbox{ does not have }  \mathrm{SLP}.
\]
For $d=3$ and $d=4$, this is an equivalence.
}
\end{theo}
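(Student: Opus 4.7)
The plan is to combine Theorem~\ref{hessvswild} with the higher-Hessian characterisation of the Strong Lefschetz Property recalled above, namely that $A = T/\mathrm{Ann}(F)$ has SLP if and only if $\mathrm{Hess}^k(F) \neq 0$ for every $k = 1, \ldots, \lfloor d/2 \rfloor$. The forward implication is essentially automatic: if $F$ is wild then Theorem~\ref{hessvswild} gives $\mathrm{Hess}^1(F) = \mathrm{Hess}(F) = 0$, which already violates the Maeno--Watanabe condition at $k=1$ and therefore forces the failure of SLP. So the real content is the reverse implications in degrees $3$ and $4$.

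For $d = 3$, one simply observes that $\lfloor d/2 \rfloor = 1$, so the SLP criterion collapses to the single condition $\mathrm{Hess}^1(F) \neq 0$; failure of SLP is literally the vanishing of the classical Hessian, and the converse implication is then immediate from Theorem~\ref{hessvswild}. The delicate case is $d = 4$, where $\lfloor d/2 \rfloor = 2$, so SLP in addition demands $\mathrm{Hess}^2(F) \neq 0$. The heart of the argument is the claim that for any concise $F \in S^4 V^*$ the second Hessian $\mathrm{Hess}^2(F)$ is automatically non-zero. To see this, I would note that the entries $\alpha_i^{(2)} \alpha_j^{(2)} F$ lie in $S^0 V^* = \CC$, and, after identifying $A_4 \cong \CC$ via the socle generator attached to $F$, the matrix $(\alpha_i^{(2)} \alpha_j^{(2)} F)_{i,j}$ is precisely the Gram matrix of the Gorenstein pairing $A_2 \times A_2 \to A_4$. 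By Macaulay duality for the Artinian Gorenstein algebra $A$, this pairing is non-degenerate, hence its determinant is non-zero. Consequently, for $d=4$ the failure of SLP is equivalent to $\mathrm{Hess}^1(F) = 0$, and Theorem~\ref{hessvswild} closes the loop.

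The main obstacle is the $d = 4$ step: one has to identify $\mathrm{Hess}^{d/2}(F)$ in even degree with the determinant of the Gorenstein duality pairing on the middle graded piece. This is a classical but essential bridge between the Hessian matrices and the algebra structure of $A$, and it is precisely what neutralises the otherwise independent $k=2$ condition in the MW criterion. For $d \geq 5$ this strategy breaks down, since the intermediate Hessians $\mathrm{Hess}^k(F) \in S^{d-2k} V^*$ with $1 < k < \lfloor d/2 \rfloor$ are non-constant polynomials and can vanish without $\mathrm{Hess}^1(F)$ vanishing; SLP may then fail without $F$ being wild. This structurally explains why the equivalence is confined to degrees $3$ and $4$.
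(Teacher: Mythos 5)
Your argument is correct and is precisely the (implicit) derivation the paper intends: the theorem appears in the introduction immediately after the Maeno--Watanabe characterisation of SLP and is presented as an immediate reformulation of Theorem~\ref{hessvswild}, so your job is exactly to fill in the routine step, which you do. The forward implication is, as you say, automatic since wildness forces $\mathrm{Hess}^1(F)=0$, and the $d=3$ equivalence is trivial from $\lfloor 3/2\rfloor =1$. The genuinely non-obvious point is that for $d=4$ the extra condition $\mathrm{Hess}^2(F)\neq 0$ is vacuous, and your identification of the matrix $\bigl(\alpha_i^{(2)}\alpha_j^{(2)}F\bigr)_{i,j}$ with the Gram matrix of the Gorenstein duality pairing $A_2\times A_2\to A_4\cong\CC$ (via the socle isomorphism $\bar\gamma\mapsto\gamma\circ F$) is exactly the right observation; non-degeneracy of that pairing for the Artinian Gorenstein algebra $T/\mathrm{Ann}(F)$ gives $\mathrm{Hess}^2(F)\neq 0$ unconditionally, so SLP for $d=4$ collapses to the single $k=1$ condition. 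Your closing remark about why this breaks for $d\geq 5$ --- that intermediate Hessians $\mathrm{Hess}^k(F)$ with $1<k<\lfloor d/2\rfloor$ are non-constant forms that can vanish independently of $\mathrm{Hess}^1$ --- correctly locates the boundary of the equivalence and is consistent with the paper's wording.
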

To prove our main result we rely on the interplay between tensors and algebras. In particular, we prove the following: 

\begin{theo}
A finite-dimensional $\CC$-algebra $A$ is Gorenstein if and only if its $3$-way structure tensor (i.e.~the tensor associated to the multiplication map $A\times A\rightarrow A$) is symmetric if and only if its $d$-way structure tensor for some $d>2$ (equivalently, for every $d$) is symmetric.
\end{theo}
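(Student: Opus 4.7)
The plan is to identify the symmetry of the structure tensor with the existence of a Frobenius functional, which for a finite-dimensional commutative $\CC$-algebra is classically equivalent to being Gorenstein.

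First, we fix what is meant by the symmetry of the $3$-way structure tensor. The multiplication $m\colon A\otimes A\rightarrow A$ is naturally a tensor $T\in A^{*}\otimes A^{*}\otimes A$; we declare $T$ symmetric when there exists an isomorphism $\Phi\colon A\rightarrow A^{*}$ such that $(\mathrm{id}\otimes \mathrm{id}\otimes \Phi)(T)\in (A^{*})^{\otimes 3}$ is $S_{3}$-invariant. Equivalently, setting $\phi(a,b):=\Phi(a)(b)$, the trilinear form $(a,b,c)\mapsto \phi(ab,c)$ is fully symmetric.

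The implication ``$A$ Gorenstein $\Rightarrow$ symmetric tensor'' is then almost immediate: $A$ is commutative and admits a Frobenius functional $\lambda\in A^{*}$ so that $\phi(a,b):=\lambda(ab)$ is non-degenerate, and the tensor $T(a,b,c)=\lambda(abc)$ is manifestly $S_{3}$-invariant by commutativity and associativity. For the converse, transposing the first two slots yields $\phi(ab,c)=\phi(ba,c)$ for every $c$, so non-degeneracy of $\phi$ forces $ab=ba$ and hence $A$ is commutative. Setting $\lambda(x):=\phi(x,1_{A})$ and swapping the last two slots we find
\[
\phi(a,b)\;=\;T(a,1,b)\;=\;T(a,b,1)\;=\;\phi(ab,1)\;=\;\lambda(ab),
\]
so $\Phi$ coincides with the map $a\mapsto \lambda(a\cdot -)$. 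Non-degeneracy of $\Phi$ then means $\lambda$ is a Frobenius functional, and a finite-dimensional commutative $\CC$-algebra with such a $\lambda$ is Gorenstein.

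For the $d$-way statement, interpret the $d$-way structure tensor as the iterated multiplication $A^{\otimes(d-1)}\rightarrow A$, symmetrised via $\Phi$ into $(A^{*})^{\otimes d}$; associativity makes this independent of the bracketing. If $A$ is Gorenstein, $(a_{1},\ldots,a_{d})\mapsto \lambda(a_{1}\cdots a_{d})$ is visibly $S_{d}$-invariant. Conversely, if the $d$-way tensor is $S_{d}$-symmetric for some $d>2$, specialising $a_{4}=\cdots=a_{d}=1_{A}$ (vacuous when $d=3$) recovers a symmetric $3$-way structure tensor, and the previous case applies.

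The main obstacle is the formalisation step: making precise how the three (resp.~$d$) slots of $A^{*}\otimes A^{*}\otimes A$ (resp.~$(A^{*})^{\otimes(d-1)}\otimes A$) are identified in order to phrase the symmetry condition, and extracting from that symmetry both commutativity of $A$ and the existence of a Frobenius functional. Once this bookkeeping is settled, the statement reduces to the classical equivalence between Frobenius and Gorenstein in the finite-dimensional commutative setting.
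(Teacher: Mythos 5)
Your proof is correct and takes essentially the same approach as the paper: both proofs hinge on the classical equivalence between Gorenstein and Frobenius for finite-dimensional commutative $\CC$-algebras, i.e., the existence of a linear form $e\colon A\to\CC$ for which $(a,b)\mapsto e(ab)$ is a perfect pairing, and both extract such an $e$ from the symmetry of the structure tensor by exploiting the unit $1_A$.

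The only noteworthy variations: in the direction "symmetric $\Rightarrow$ Gorenstein" the paper argues for general $d$ directly, contracting the $d-2$ rightmost factors of $T$ against rank-one tensors and showing by symmetry that the result can be made a perfect pairing of the form $e(ab)$; you instead reduce to $d=3$ by specializing $a_4=\cdots=a_d=1_A$, which is equally valid once one checks (as the $S_d$-symmetry and $\phi(a_1a_2a_3,1)=\phi(a_1a_2,a_3)$ confirm) that this specialization genuinely yields the $3$-way structure tensor with the same identification $\Phi$. You also observe that the $(1,2)$-transposition invariance of $\phi(ab,c)$ together with non-degeneracy of $\phi$ forces commutativity of $A$; the paper instead restricts to commutative algebras from the outset, since its definition of the $d$-way structure tensor already requires a symmetric multiplication. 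Your remark is a mild bonus rather than a divergence, and it usefully clarifies that "finite-dimensional" in the statement can be read without the implicit commutativity hypothesis.
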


We also provide a possible extension of the results of Bl\"{a}ser and Lysikov \cite{BL} to $d$-way tensors: 

\begin{theo}
Let $T$ be a symmetric $d$-way tensor of minimal border rank. Suppose there exists a contraction $T(\ell^{\otimes d-2})$ which is a full-rank symmetric matrix. Then $T$ is the structure tensor of a smoothable Gorenstein algebra.
\end{theo}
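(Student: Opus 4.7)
The strategy is to reduce to the 3-way result of Bl\"aser and Lysikov by flattening, and then to invoke the Gorenstein characterization proved earlier in this paper. Set $n = \dim V$.

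First, I would flatten $T \in S^d V^*$ into a 3-way tensor $\widetilde{T} \in V^* \otimes V^* \otimes S^{d-2}V^*$ by symmetrically grouping the last $d-2$ slots. The hypothesis that $M := T(\ell^{\otimes d-2})$ is a full-rank symmetric matrix is exactly the assertion that the slice of $\widetilde{T}$ at $\ell^{\otimes d-2} \in S^{d-2}V$ has rank $n$; in particular, $\widetilde{T}$ is $1_*$-generic in its third factor. Any symmetric border rank decomposition of $T$ of length $n$ flattens to a 3-way decomposition of $\widetilde{T}$ of the same length, so $\underline{\mathbf{r}}(\widetilde{T}) \le n$; the full-rank slice forces the reverse inequality, yielding $\underline{\mathbf{r}}(\widetilde{T}) = n$. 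After restricting the third factor to the span of the 3-direction slices, $\widetilde{T}$ becomes a concise, $1_*$-generic, minimal border rank 3-tensor.

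Next, I would apply the Bl\"aser-Lysikov theorem to $\widetilde{T}$, producing an $n$-dimensional smoothable associative $\CC$-algebra $A$, together with an identification $V \cong A$ (using the pivot $M$), such that the 3-way structure tensor of $A$ equals $\widetilde{T}$. The $S_d$-symmetry of $T$ in particular makes $\widetilde{T}$ invariant under swapping its first two factors, so $A$ is commutative. Unwinding the construction then yields a canonical linear functional $\tau_A : A \to \CC$ such that $T(a_1, \ldots, a_d) = \tau_A(a_1 a_2 \cdots a_d)$ for all $a_i \in A$; that is, $T$ is realized as the $d$-way structure tensor of $A$. The theorem from earlier in this paper then forces $A$ to be Gorenstein, since its $d$-way structure tensor $T$ is symmetric by hypothesis. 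Combined with smoothability from Bl\"aser-Lysikov, this gives the desired conclusion.

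The main obstacle lies in the last displayed identity. Bl\"aser-Lysikov only yields an algebra whose \emph{3-way} structure tensor is $\widetilde{T}$; to conclude that $T$ itself is the $d$-way structure tensor of $A$, one must propagate the multiplication through all $d$ slots of $T$ and establish compatibility with the socle functional determined by $M$. This uses the full $S_d$-symmetry of $T$: different groupings of the $d$ slots into three should produce compatible 3-flattenings, forcing the iterated associative products to agree with the multilinear data of $T$. The invertibility of $T(\ell^{\otimes d-2})$ is precisely what anchors this identification and pins down the trace $\tau_A$.
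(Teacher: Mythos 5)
You have the right overall skeleton: flatten, relate to Bl\"aser--Lysikov, deduce Gorenstein from the symmetry result elsewhere in the paper. However, the gap you flag in the last paragraph is not a loose end to be ``propagated'' --- it is the technical heart of the proof, and your gesture at ``compatibility of different flattenings'' does not constitute an argument. Concretely, Bl\"aser--Lysikov applied to the restricted $3$-flattening gives three independent linear isomorphisms identifying the three factors with $A^*, A^*, A$ respectively, and identifying $\widetilde{T}$ with $T_A$; it says nothing about how the last isomorphism (on the $S^{d-2}V^*$ side) interacts with the first two. To conclude $T(a_1,\dots,a_d)=\tau_A(a_1\cdots a_d)$ you need the identity $M_{i_1}M_{i_2}\cdots M_{i_{d-1}}=\sum_j T^{i_1\dots i_{d-1}j}M_j$ for \emph{all} index tuples, which the paper calls the ``Structure property'' and proves by a nontrivial index computation combining the full $S_d$-symmetry, the normalization $T^{1\cdots1kl}=\delta_{kl}$ coming from the pivot $Q=T(\ell^{\otimes d-2})$, Strassen's commutator equations, and the ``closed under composition'' identity from Landsberg--Micha\l{}ek. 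Symmetry alone does not force this: a symmetric tensor is strictly more data than its $(2,d-2)$-flattening, and the equality of flattenings obtained from different groupings is automatic from $S_d$-symmetry and therefore carries no extra information.

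Two smaller points. First, you verify only $1_{V_3}$-genericity of $\widetilde{T}$ (the pivot is a contraction in the symmetric-power slot), whereas Bl\"aser--Lysikov needs $1_{V_1}$- and $1_{V_2}$-genericity; these do follow here because $\times\ell^{i}:A_1\to A_{1+i}$ is injective for $0\le i\le d-2$ (being a factor of the isomorphism $\times\ell^{d-2}$), but that needs to be said. Second, and for the same reason, restricting the third factor to a space of dimension $n+1$ requires $\dim A_{d-2}=n+1$; again this follows from the injectivity just mentioned together with the minimal-border-rank bound $\dim A_i\le n+1$, but it is a hypothesis you are silently using. Note that the paper sidesteps both issues by \emph{not} collapsing to a cube tensor: it keeps the rectangular flattening $T_F\in(V^{\otimes d-2})\otimes V\otimes V$, extracts the matrices $M_i$ directly, proves the three algebra axioms by hand, constructs $A=\CC[x_1,\dots,x_{n+1}]/\ker\phi$, and only then invokes Bl\"aser--Lysikov on the $3$-way structure tensor of the already-built $A$ to conclude smoothability. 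The two orders of operation are not equivalent until the Structure property is established, and that is exactly what your proposal leaves unproved.
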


As an application of our main result, we exhibit two infinite series of (concise) wild forms. 
In \S\ref{higherdeg}, we give a series of wild forms $G_d$ of every degree $d\geq 3$, and in \S\ref{wildcubics} a series of wild cubics $F_n$. Both
of them are of minimal border rank. In particular, this shows the next 

\begin{theo}\emph{
There exist concise minimal border rank wild forms of any degree $d\geq 3$.
}
\end{theo}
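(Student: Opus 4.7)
The plan is to exhibit, for each $d\geq 3$, an explicit concise form $G_d\in S^d V^{*}$ of minimal border rank whose Hessian vanishes identically; Theorem \ref{hessvswild} will then upgrade the vanishing of the Hessian to wildness. Hence only three items need to be verified for each member of the family: conciseness, vanishing of $\mathrm{Hess}(G_d)$, and the equality $\underline{{\bf r}}(G_d)=\dim V$.

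The construction follows the classical Perazzo template. For the base case $d=3$, the series $F_n$ of \S\ref{wildcubics} supplies an infinite family of concise minimal border rank cubics with vanishing Hessian. For $d\geq 4$, we partition the variables of $V^{*}$ into two groups $\{x_0,\ldots,x_s\}$ and $\{y_0,\ldots,y_t\}$ and take
\[
G_d=\sum_{i=0}^{s} x_i\, f_i(y_0,\ldots,y_t),
\]
for carefully chosen forms $f_i$ of degree $d-1$ in the $y_j$'s that are linearly independent but satisfy a nontrivial polynomial relation; explicit choices are made in \S\ref{higherdeg}.

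Conciseness will be a direct inspection of the support of $G_d$: each $x_i$ and each $y_j$ appears nontrivially, so no nonzero linear derivation annihilates $G_d$. The vanishing of $\mathrm{Hess}(G_d)$ is the Jacobian-rank computation standard for Perazzo-type forms: the algebraic dependence among the $f_i$'s produces a nontrivial polynomial relation among the columns of the Hessian matrix, forcing its determinant to vanish identically. Both checks are essentially formal once the $f_i$'s are chosen with the appropriate dependencies.

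The main obstacle is establishing the upper bound $\underline{{\bf r}}(G_d)\leq \dim V$; the matching lower bound will follow from conciseness. The intended route is via apolarity: construct an explicit flat one-parameter family of apolar Gorenstein subschemes of length $\dim V$ whose special fibre is the apolar scheme of $G_d$ and whose generic fibre is reduced. The Perazzo layout of $G_d$ suggests natural curvilinear degenerations realizing such a family; alternatively, one may invoke the structure-tensor criterion proved above to recognize $G_d$ as the structure tensor of a smoothable Gorenstein $\CC$-algebra of dimension $\dim V$, which likewise forces $\underline{{\bf r}}(G_d) = \dim V$. With minimal border rank secured, Theorem \ref{hessvswild} upgrades the vanishing of the Hessian to wildness, producing the required infinite family in each degree $d\geq 3$.
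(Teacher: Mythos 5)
Your overarching strategy---exhibit a concise minimal border rank form with vanishing Hessian and invoke Theorem~\ref{hessvswild}---is exactly the one the paper follows (via the explicit family $G_d=\sum_{i=0}^{d-1}v_iu_0^iu_1^{d-1-i}$ of \S\ref{higherdeg}), and your checks of conciseness and vanishing Hessian are routine and correct. The gap is in how you propose to secure $\underline{{\bf r}}(G_d)\leq\dim V$, and both alternatives you offer are incompatible with wildness and so cannot succeed.

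First, there is no ``apolar Gorenstein subscheme of length $\dim V$'' whose generic fibre is reduced and which one can degenerate to: the whole content of wildness (which we are trying to prove) is that $\mathrm{cr}(G_d)>\dim V$, hence \emph{no} zero-dimensional scheme of length $\dim V$ is apolar to $G_d$. Any flat family of length-$(n+1)$ schemes whose ideals lie in $\mathrm{Ann}(G_d)$ would, in the limit, give an apolar scheme of that length and contradict Theorem~\ref{hessvswild}. Second, the structure-tensor alternative is directly ruled out by Theorem~\ref{maincubics}: if $T_{G_d}$ were the structure tensor of a smoothable Gorenstein algebra, then $\mathrm{sr}(G_d)=\dim V$, which is equivalent to $\mathrm{Hess}(G_d)\neq 0$---contradicting the vanishing Hessian you just established. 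The hypothesis of Proposition~\ref{prop:Gor} is precisely $\mathrm{Hess}(F)\neq 0$, so this tool is unavailable for Perazzo-type forms.

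What actually works, and what the paper does in Lemma~\ref{borderankGd}, is to bound border rank by Proposition~\ref{tangentcones}: one writes $G_d=\sum v_i\ell_i^{d-1}$ so that $G_d$ lies in the span of the affine tangent cones to $\nu_d(\PP^n)$ at the $d$ points $\ell_i^d$, then throws in two more points on the rational normal curve $\nu_d(\PP^1)$ so that the resulting $d+2$ points of $X$ are linearly \emph{dependent}. Proposition~\ref{tangentcones} then places $G_d$ inside $\sigma_{d+2}(X)$, giving the upper bound, and conciseness gives the matching lower bound. The resulting limiting scheme lies on a $\PP^d\subsetneq\PP^{d+1}$, so it is degenerate and its span does \emph{not} contain $G_d$; that discrepancy is exactly the wildness, rather than something to be avoided. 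You should replace your two proposed routes with this one.
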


Employing Buczy\'nska-Buczy\'nski's border apolarity theory, we offer a study of border varieties of sums of powers $\underline{\mathrm{VSP}}$'s
of these forms in the corresponding multigraded Hilbert schemes. To our knowledge, this is the first attempt to describe such varieties for some forms. 
For the first series $G_d$, we show that they are projective spaces; see Theorem \ref{vspprojectivespaces}. 

For the second series $F_n$ and $n\geq 10$, in Theorem \ref{vspreducible} we prove that they are reducible. This is achieved by relying on the (usual) Hilbert schemes of zero-dimensional schemes on a {\it chain of lines}, see \S\ref{wildcubics}. We point out that this result on reducibility is also motivated by the fact that establishing this property is usually a delicate and interesting issue even in the context of the classical varieties of sums of powers $\mathrm{VSP}$'s.

\vspace{2mm}

\noindent {\bf Structure of the paper.}
\vspace{1mm}

In \S\ref{prelim}, we introduce notation and recall the definitions of ranks we need throughout the article. 
\S\ref{mainsec} is devoted to the proof of the first part of our main result, i.e., vanishing Hessian implies wild for concise forms of minimal border rank, Theorem \ref{hessvswild}. 

In \S\ref{cubicssmoothablealgebras}, we make a detour through structure tensors of smoothable and Gorenstein algebras to 
establish Theorem \ref{maincubics}. The latter gives the remaining part of our main result.

In \S\ref{seclimsch}, we give the definition of a limiting scheme of a border rank decomposition and its relations with $\underline{\mathrm{VSP}}$. Theorem \ref{satidealsvsp} recalls that the saturation of an ideal in $\underline{\mathrm{VSP}}$ is the ideal of a limiting scheme of a border rank decomposition. 
Theorem \ref{correspondence} shows the correspondence between ideals and border decompositions, in the regime of minimal border rank. 

In \S\ref{higherdeg}, we introduce the infinite series of concise degree $d$ forms $G_d$. We show that they are wild in Corollary \ref{infiniteseriesdeg}. Moreover, 
we prove that their $\underline{\mathrm{VSP}}$'s are isomorphic to projective spaces; this is achieved in Theorem \ref{vspprojectivespaces}. 

In \S\ref{wildcubics}, we introduce the infinite series of concise cubics $F_n$. Corollary \ref{infiniteseries} states that they are wild. We show that when $n\geq 10$, 
their $\underline{\mathrm{VSP}}$'s are reducible; see Theorem \ref{vspreducible}. 

\vspace{2mm}

\noindent {\bf Acknowledgements.}
\vspace{1mm}

We thank Edoardo Ballico, Weronika Buczy\'nska, Jaros\l{}aw Buczy\'nski, J.M. Landsberg, Giorgio Ottaviani, and Francesco Russo for very useful discussions. 
We thank Giorgio Ottaviani for his remark \cite[Remark 5.1]{bb19}: this stimulated the curiosity that led to the present article. 
We thank an anonymous referee for useful comments and for posing interesting and stimulating questions. The third author is supported by the grant NWO Den Haag no. 38-573 of Jan Draisma. 

\section{Preliminaries}\label{prelim}

Here we introduce notation and definitions we use throughout the paper. 
We work over the complex numbers. Let $V \cong \CC^{n+1}$ and $V^* = \langle x_0, \ldots, x_n\rangle$. Let $\PP^n=\PP(V)$ denote the projectivization of $V$. 
Let $S = S^{\bullet} V^{*} \cong \CC[x_0,\ldots, x_n]$ be its homogeneous coordinate ring, and $T = \CC[y_0,\ldots, y_n]$ be its dual ring, i.e. $T$ acts by 
differentiation on $S$ with $y_i \circ x_j = \delta_{i,j}$. 

For a homogeneous ideal $\mathcal J\subset T$, let $\mathcal J_d$ denote its degree $d$ homogeneous component. For $\mathcal J\subset T$,
let $\mathcal J^{sat}$ denote its saturation. The {\it Hilbert function}
of $\mathcal J$ is the numerical function
\[
\mathrm{HF}(T/\mathcal J, d) = \dim S^d V - \dim \mathcal{J}_d = \dim\left(T/\mathcal J\right)_{d}. 
\]

Given a form $F\in S^d V^*$, $\mathrm{Ann}(F)\subset T$ denotes its annihilator or apolar ideal 
\[
\mathrm{Ann}(F) = \left\lbrace h \in T \ | \ h\circ F = 0\right\rbrace.
\]
The algebra $T/\mathrm{Ann}(F)$ is a graded Artinian Gorenstein $\CC$-algebra; see \cite[\S 2.3]{ik} or \cite[Theorem 7.2.15]{russo}. 

Let $N_d = \binom{n+d}{d}-1$ and $X = \nu_d(\PP^n)\subset \PP^{N_d}$ be the $d$-th Veronese embedding of $\PP^n$. We only consider ranks with respect to the Veronese variety, although the ensuing definitions may be more generally introduced for any projective variety. 

Let $R$ be a zero-dimensional scheme over $\CC$; then $R = \mathrm{Spec}(A)$ for some finite-dimensional $\CC$-algebra $A$. The {\it length} of $R$ is $\mathrm{length}(R) = \dim_{\CC} A$. 

For a zero-dimensional scheme $R\subset \PP^n=\PP(V)$, let $\langle R\rangle$ denote its span, i.e. 
\[
\langle R\rangle = \PP\left(\left(V^*/\mathcal I_1\right)^{*}\right)\subset \PP^{n},
\] 
where $\mathcal I$ is the saturated ideal defining $R$ in $\PP^{n}$. 

\begin{definition}[{\bf Border rank}]
For a point $F\in \PP^{N_d}$, the {\it border rank} of $F$ is the minimal integer $r$ such that $F\in \sigma_r(X)$, the $r$-th secant variety of $X$. The border rank of $F$ is denoted $\underline{{\bf r}}(F)$. 
\end{definition}

\begin{definition}
A form $F\in S^d V^{*}$ is {\it concise} if its annihilator $\mathrm{Ann}(F)$ does not contain linear forms.\end{definition}

\begin{definition}\label{minbr}
A form $F\in S^d V^{*}$ is said to be of {\it minimal border rank} when 
\[
\underline{{\bf r}}(F) = \dim V.
\]
\end{definition}

\begin{remark}
The border rank of a concise form $F\in S^d V^{*}$ satisfies $\underline{{\bf r}}(F)\geq \dim V$. This explains the adjective {\it minimal}
in Definition \ref{minbr}. 
\end{remark}

In general, given $F\in S^d V^{*}$, it might be challenging to produce a border rank decomposition $F = \lim_{t\rightarrow 0}\frac{1}{t^s}\left(L_1(t)^d + \ldots  +L_{\underline{{\bf r}}(F)}(t)^d\right)$. To determine border ranks for our infinite series of forms $G_d$ and $F_n$, we employ a useful criterion:

\begin{proposition}[{\bf \cite[Proposition 2.6]{bb15}}]\label{tangentcones}
Let $X$ be as above. 
Suppose there exist points $z_1,\ldots, z_r\in X$ such that $\dim \langle z_1,\ldots, z_r\rangle< r-1$. Then 
the span of the affine cones of Zariski tangent spaces at these points is contained in the $r$-th secant variety $\sigma_r(X)$, i.e. 
\[
\langle \PP \widehat{T}_{z_1}, \ldots, \PP \widehat{T}_{z_r}\rangle\subset \sigma_r(X).  
\]
\end{proposition}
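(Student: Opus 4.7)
The plan is to show, for every $v \in \widehat{T}_{z_1} + \cdots + \widehat{T}_{z_r}$, that $v$ lies in the affine cone over $\sigma_r(X)$; the projective statement then follows. The strategy is to produce an explicit one-parameter family of $r$-tuples of points on the affine cone $\hat X$ whose coordinate sum converges to $v$, since the affine cone over $\sigma_r(X)$ is by definition the Zariski closure of the image of the summation morphism $\Phi \colon \hat X^{\times r} \to \CC^{N+1}$, $(y_1,\dots,y_r) \mapsto y_1+\cdots+y_r$.

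First I would use the hypothesis $\dim \langle z_1,\dots,z_r\rangle < r-1$ to extract a nontrivial linear relation $\sum_i c_i \tilde z_i = 0$ among affine lifts $\tilde z_i \in \hat X \subset \CC^{N+1}$, and reduce to the case in which every $c_i \neq 0$. Granting this, I would rescale $\tilde z_i \leftarrow c_i \tilde z_i \in \hat X$: since $\widehat T_{z_i}$ is a linear subspace of $\CC^{N+1}$ through the origin, and the tangent spaces to the cone $\hat X$ at proportional points coincide, $\widehat T_{z_i}$ is unchanged, and now $\sum_i \tilde z_i = 0$.

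Given $v = \sum v_i$ with $v_i \in \widehat T_{z_i}$, I would pick analytic curves $\gamma_i \colon (\CC, 0) \to \hat X$ with $\gamma_i(0) = \tilde z_i$ and $\gamma_i'(0) = v_i$, which exist because $X = \nu_d(\PP^n)$ is smooth. Since $\hat X$ is a cone, the rescaled curve $\tfrac{1}{t}\gamma_i(t)$ also lies in $\hat X$ and expands as $\tfrac{1}{t}\tilde z_i + v_i + O(t)$. Summing and using $\sum_i \tilde z_i = 0$ yields
\[
\sum_{i=1}^{r} \tfrac{1}{t}\gamma_i(t) \;=\; \tfrac{1}{t}\sum_i \tilde z_i + v + O(t) \;=\; v + O(t),
\]
an explicit curve in $\Phi(\hat X^{\times r})$ with limit $v$; hence $v \in \overline{\Phi(\hat X^{\times r})}$, as desired.

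The step I expect to be the main obstacle is the initial reduction to a relation with all $c_i \neq 0$. This fails precisely when the space of relations lies in a coordinate hyperplane $\{c_j = 0\}$, i.e.\ when $\tilde z_j$ is linearly independent of the remaining lifts. I would handle this by induction on $r$: the remaining $r-1$ lifts already satisfy the hypothesis and inductively $\sum_{i \neq j}\widehat T_{z_i}$ lies in the affine cone over $\sigma_{r-1}(X)$, after which a supplementary Veronese-specific curve argument (using the identity $L^{d-1}M = \lim_{t\to 0}\tfrac{1}{dt}((L+tM)^d - L^d)$ to approximate any element of $\widehat T_{z_j}$ while merging one of the two required curves into the preexisting $\sigma_{r-1}$-decomposition) keeps the total summand count at $r$.
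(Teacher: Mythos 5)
Your core argument matches exactly the mechanism the paper uses, visible in its proof of Corollary \ref{tangentcone1} (which extracts the key step of \cite[Proposition 2.6]{bb15}): rescale affine lifts so that $\sum_i \tilde z_i = 0$, pick analytic curves $\gamma_i(t)$ on $\widehat X$ through $\tilde z_i$ with $\gamma_i'(0) = v_i$, and note that $\sum_i \tfrac{1}{t}\gamma_i(t) \to v$, which exhibits $v$ as a limit of sums of $r$ points of $\widehat X$. That is precisely the paper's curve-and-rescale argument, and your justification that $\widehat{\sigma_r}(X)$ is the closure of the image of the summation morphism is the correct framing.

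Where your proposal runs into trouble is exactly the step you flagged: reducing to a linear relation $\sum_i c_i\tilde z_i = 0$ with \emph{all} $c_i\neq 0$. The paper's Corollary \ref{tangentcone1} sidesteps this by simply \emph{assuming} lifts with $\sum_i \hat z_i = 0$ exist, which is equivalent to assuming the relation has no zero coefficients; this holds in all the applications in \S\ref{higherdeg} and \S\ref{wildcubics}, since there any $r-1$ of the chosen points are linearly independent. But your inductive fix does not close the gap. After removing a $z_j$ not appearing in any relation and applying the inductive hypothesis, you are in the situation of adding a tangent vector $v_j = L^{d-1}M \in \widehat T_{z_j}$ (with $L^d = \tilde z_j$) to a point already expressed as $\lim_{t\to 0}\sum_{i\neq j}\tfrac{1}{t}\gamma_i(t)$, a limit of $r-1$ summands. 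Writing $v_j = \lim_{t\to 0}\tfrac{1}{dt}\bigl((L+tM)^d - L^d\bigr)$ then gives $r+1$ summands in total, and the term $-\tfrac{1}{dt}L^d$ has nothing to merge with: none of the $\tfrac{1}{t}\gamma_i(t)$ converges to a multiple of $L^d$, since $\gamma_i(t)\to\tilde z_i$ and $z_i\neq z_j$. So the ``merging'' as described does not produce an $r$-term limit. To make this rigorous you would instead need to choose, from the outset, a \emph{minimal} linearly dependent subset of $\{z_1,\ldots,z_r\}$ (which by minimality has a relation with all coefficients nonzero) and restrict the claim accordingly, or otherwise strengthen the hypotheses; without this, the induction does not close.
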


We now recall the scheme-theoretic ranks attached to $X$. 

\begin{definition}[{\bf Smoothable rank}]
The {\it smoothable rank} of $F\in \PP^{N_d}$ is the minimal integer $r$ such that there exists a finite scheme $R\subset X$ of length $r$ which is {\it smoothable}
(in $X$) and $F\in \langle R\rangle$. Equivalently, there exists a finite smoothable scheme $R\subset X$ of length $r$ whose saturated ideal satisfies $\mathcal J_R\subset \mathrm{Ann}(F)$. (This is in analogy with the classical {\it Apolarity lemma} \cite[Lemma 1.15]{ik}.) The smoothable rank of $F$ is denoted $\mathrm{sr}(F)$. 
\end{definition}

\begin{remark}
Smoothable and border ranks satisfy $\underline{{\bf r}}(F)\leq \mathrm{sr}(F)$; see \cite[\S 2.1]{bb15}. Equality holds
for most of the points on a secant variety. The difference arises in general from the {\it failure} of the equality 
\[
\langle \lim_{t\rightarrow 0} R(t)\rangle = \lim_{t\rightarrow 0} \langle R(t) \rangle,
\]
where $R(t)$ is a family of zero-dimensional schemes over the base $\mathrm{Spec}(\CC[t^{\pm 1}])$ and $\lim_{t\rightarrow 0} R(t)$ denotes its
flat limit; see for instance \cite[II.3.4]{eh}.
\end{remark}

\begin{definition}[{\bf Cactus rank}]
The {\it cactus rank} of $F\in \PP^{N_d}$ is the minimal integer $r$ such that there exists a finite scheme $R\subset X$ of length $r$  such that $F\in \langle R\rangle$. 
Equivalently, there exists a finite scheme $R\subset X$ of length $r$ whose saturated ideal satisfies $\mathcal J_R\subset \mathrm{Ann}(F)$. (This is in analogy with the classical {\it Apolarity lemma} \cite[Lemma 1.15]{ik}.) The cactus rank of $F$ is denoted $\mathrm{cr}(F)$. 
\end{definition}

The cactus rank was originally called {\it scheme length} \cite[Definition 5.1]{ik}. 

\begin{remark}
From definitions, one has $\mathrm{cr}(F)\leq \mathrm{sr}(F)$. Moreover, $\mathrm{cr}(F)$ and \underline{{\bf r}}(F) are incomparable. 
Theorem \ref{hessvswild} produces infinitely many examples where $\mathrm{cr}(F) > \underline{{\bf r}}(F)$; however one has $\mathrm{cr}(F) < \underline{{\bf r}}(F)$ as well in several instances; see e.g. \cite{br13}. 
\end{remark}

\begin{definition}
A form $F\in S^d V^*$ is a {\it form with vanishing Hessian} if the determinant of its Hessian matrix 
$\mathrm{Hess}(F) = \mathrm{det}\left(\left[\frac{\partial^2 F}{\partial x_i\partial x_j}\right]\right)$ vanishes identically. 
\end{definition}
See \cite[Chapter 7]{russo} and references therein for a complete introduction to several remarkable algebraic and geometric properties that forms with vanishing Hessian possess.

For any form $F\in S^d V^{*}$, Buczy\'nska and Buczy\'nski \cite[\S 4.1]{bb19} introduced the {\it border variety of sums of powers} $\underline{\mathrm{VSP}}(F,\underline{{\bf r}}(F))$. These varieties live in {\it multigraded Hilbert schemes}, which were introduced by Haiman and Sturmfels \cite{hs}. We now recall
the definition of an irreducible component of the multigraded Hilbert scheme we are concerned with; see \cite[\S 3]{bb19} for a detailed discussion.

\begin{definition}
An ideal $\mathcal J\subset T$, whose Hilbert polynomial is equal to $r\in \NN$, is said to have a {\it generic Hilbert function} if its Hilbert function 
satisfies 
\[
\mathrm{HF}(T/\mathcal J, d) = \min\lbrace r, \dim S^d V \rbrace, \mbox{ for } d\geq 0.
\]
Let $\mathrm{Slip}_{r,\PP^n}$ be the irreducible component of the multigraded Hilbert scheme $\mathrm{Hilb}^{h_r,\PP^n}_T$ containing the radical ideals of $r$ distinct points with a generic Hilbert function. Therefore, every ideal in $\mathrm{Slip}_{r,\PP^n}$ has a generic Hilbert function being a flat limit of such ideals. \end{definition}

\begin{definition}
Let $F\in S^d V^{*}$. The {\it border variety of sums of powers}, or {\it border VSP}, of $F$ is 
\[
\underline{\mathrm{VSP}}(F , r) = \left\lbrace \mathcal J\in \mathrm{Slip}_{r,\PP^n} \ | \ \mathcal J\subset \mathrm{Ann}(F)\subset T \right\rbrace.
\]
One case of interest is when $r= \underline{{\bf r}}(F)$.
\end{definition}

\section{Wildness}\label{mainsec}

\begin{definition}
A form $F\in S^d V^{*}$ is {\it wild} if $sr(F) > \underline{{\bf r}}(F)$. 
\end{definition}

\begin{remark}
A consequence of wildness of a form $F$ is that all the ideals in $\underline{\mathrm{VSP}}(F, \underline{{\bf r}}(F))$ are {\it not saturated}. 
\end{remark}

Before proving the next result, we introduce another piece of notation. Let $W\subset U$ be finite-dimensional vector spaces. Then $W^{\perp} = \lbrace h\in U^{*} \ | \ h(w) = 0 \mbox{ for all } w\in W\rbrace$ is the  {\it annihilator} of $W$. 

\begin{proposition}\label{transcdeg}

Let $R\subset \PP^n$ be a projective scheme defined by the saturated ideal $\mathcal J$ such that $\langle R\rangle = \PP^n$.  
Then, for any $d\geq 1$, the affine variety $\CC[\left(\mathcal{J}_d\right)^{\perp}]$ has dimension $n+1$, i.e. the linear space $\left(\mathcal{J}_d\right)^{\perp}$ is spanned by at least $n+1$ algebraically independent forms. 

\begin{proof}
Choose a linear form $z$ such that $\mathcal J:z = \mathcal J$, i.e. $\mathcal J$ is saturated with respect to $z$, or equivalently,
there is no associated prime containing $z$. Such a linear form exists by assumption. 

Let  $S= \CC[z,x_1,\ldots,x_n]$ be the homogeneous coordinate ring of the ambient projective space. Up to change of basis, we can present the homogeneous saturated ideal of $R$ as follows: 
\[
\mathcal J = \langle z^{d_1-1}\ell_1 + h_1, \ldots, z^{d_k-1}\ell_k + h_k\rangle,
\]
where the $\ell_i$ are (possibly zero) linear forms and the $h_j$ are forms of degree $d_j$ that are quadratic in the variables $x_i$. 

Let $W = \mathcal{J}_d$. Note that $z^{d}\in W^{\perp}$ and let $V = \langle z^{d-1}x_1,\ldots, z^{d-1}x_n\rangle$. By assumption $\mathcal{J}_1=0$. 
Notice that $V\cap W = 0$. Indeed, if $z^{d-1}l(x_1,\ldots, x_n) \in W\subset \mathcal{J}$ with $l$ a linear form, then $l(x_1,\ldots,x_n)\in \mathcal{J}$, which is in contradiction with our assumption.

The condition $V\cap W = 0$ implies that we have a surjection
\[
S_d^{*}\supset W^{\perp} \twoheadrightarrow S_d^{*}/V^{\perp} = V^{*}\subset S_d^{*}. 
\]
Thus we can lift the basis of $V^{*}\subset S_d^{*}$ consisting of the vectors $z^{d-1}x_1,\ldots, z^{d-1}x_n$ (by abuse of notation, the duals of $z^{d-1}x_i$ are denoted in the same way) to an independent set in $W^{\perp}$. Therefore: 
\[
\left(\mathcal{J}_d\right)^{\perp} \supseteq \langle z^d, z^{d-1}x_1+g_1, \ldots, z^{d-1}x_n + g_n\rangle. 
\]

To show that the forms on the right-hand side are algebraically independent, we dehomogenize them and look at the affine map they induce: 
\[
\phi: \CC^n \longrightarrow \CC^n,
\]
\[
(x_1,\ldots, x_n)\longmapsto (x_1 + \tilde{g_1}, \ldots, x_n + \tilde{g_n}).
\]

Notice that the differential of $\phi$ at the origin ${\bf 0}\in \CC^n$ is the identity. Thus there is a local isomorphism between tangent spaces and so the dimension 
of the image is $n$. Therefore the dimension of the affine variety  $\CC[\left(\mathcal{J}_d\right)^{\perp}]$ is $n+1$. This is equivalent to 
the fact that the linear space $\left(\mathcal{J}_d\right)^{\perp}$ is spanned by at least $n+1$ algebraically independent degree $d$ forms. 
\end{proof}
\end{proposition}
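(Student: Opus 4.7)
The plan is to exhibit $n+1$ algebraically independent degree $d$ forms inside $(\mathcal{J}_d)^\perp$, with the construction driven by the saturation hypothesis on $\mathcal{J}$. First, I would invoke saturation to pick a linear form $z$ that is a non-zero-divisor modulo $\mathcal{J}$ (equivalently, no associated prime of $\mathcal{J}$ contains $z$, which amounts to choosing a sufficiently generic hyperplane). After completing $z$ to a basis $z, x_1, \ldots, x_n$ of the linear forms, the candidate forms will be $z^d$ together with $n$ perturbations of the shape $z^{d-1}x_i + g_i$.

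The main linear-algebra input is the transversality $V \cap \mathcal{J}_d = 0$, where $V = \langle z^{d-1}x_1, \ldots, z^{d-1}x_n\rangle$. Indeed, if $z^{d-1}\ell$ lay in $\mathcal{J}_d$ for a nonzero linear form $\ell$, iterated saturation against $z$ would force $\ell \in \mathcal{J}_1$, contradicting $\langle R\rangle = \PP^n$. Dualizing this transversality yields a surjection $(\mathcal{J}_d)^\perp \twoheadrightarrow V^*$, so the dual basis of $V$ lifts to elements $z^{d-1}x_i + g_i$ in $(\mathcal{J}_d)^\perp$ whose corrections $g_i$ can be arranged to be free of the leading monomials $z^{d-1}x_j$. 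That $z^d$ itself lies in $(\mathcal{J}_d)^\perp$ is immediate from the choice of $z$.

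To conclude algebraic independence of the $n+1$ forms so produced, I would dehomogenize by setting $z = 1$, converting the perturbations into polynomials $x_i + \tilde{g}_i$ on $\AA^n$. The Jacobian of the resulting endomorphism of $\AA^n$ at the origin is the identity, so the map is étale there and in particular dominant, yielding algebraic independence of the $n$ dehomogenized forms. Reinstating the grading variable $z$ contributes the missing transcendence degree, giving $n+1$ altogether. The only delicate point is ensuring that the lifts $z^{d-1}x_i + g_i$ can be chosen with no linear part in the $x_j$-directions—this is precisely what the splitting $V \cap \mathcal{J}_d = 0$ secures, after which the rest is a routine unwinding of the apolarity pairing.
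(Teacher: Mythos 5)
Your proposal retraces the paper's proof step by step: pick $z$ a nonzerodivisor modulo $\mathcal J$, use $\mathcal J:z^{d-1}=\mathcal J$ together with $\mathcal J_1=0$ to see $\langle z^{d-1}x_1,\dots,z^{d-1}x_n\rangle\cap\mathcal J_d=0$, dualize to lift $z^{d-1}x_i+g_i$ into $(\mathcal J_d)^\perp$, then dehomogenize and run the Jacobian-at-the-origin argument. This is the same decomposition and the same key computations.

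The one place where your write-up is weaker than the paper's is the sentence claiming that $z^d\in(\mathcal J_d)^\perp$ is ``immediate from the choice of $z$.'' It is not: the condition $\mathcal J:z=\mathcal J$ is a statement about the hyperplane $\{z=0\}$, whereas $z^d\in(\mathcal J_d)^\perp$ (the functional picking out the $z^d$-coefficient annihilates $\mathcal J_d$) is a statement about the point $[e_0]\in\PP^n$ dual to $z$ with respect to the chosen basis $z,x_1,\dots,x_n$, and this depends on how the basis is completed. For instance, with $\mathcal J=\langle z^2-x_1x_2\rangle$ one has $\mathcal J:z=\mathcal J$ but the dual of $z^2$ does \emph{not} annihilate $\mathcal J_2$. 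The paper handles this by the normal-form presentation of $\mathcal J$, which amounts to choosing the complementary coordinates $x_1,\dots,x_n$ so that the point $[1:0:\cdots:0]$ lies on $R$ (such a point exists because $R\not\subset\{z=0\}$). A cleaner fix that avoids any coordinate adjustment is to enlarge $V$ to $V'=\langle z^d,z^{d-1}x_1,\dots,z^{d-1}x_n\rangle$: the same saturation argument gives $V'\cap\mathcal J_d=0$, so one lifts all $n+1$ dual vectors to $z^d+g_0,\ z^{d-1}x_1+g_1,\dots,z^{d-1}x_n+g_n$ in $(\mathcal J_d)^\perp$, and the Jacobian of $(z,x)\mapsto(z^d+g_0,\dots,z^{d-1}x_n+g_n)$ at $(1,0,\dots,0)$ is still $\operatorname{diag}(d,1,\dots,1)$, so these $n+1$ forms are algebraically independent. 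With that one sentence repaired, your argument is the paper's.
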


\begin{theorem}\label{hessvswild}
Let $d\geq 3$ and $F\in S^d V^{*}$ be a concise form of minimal border rank. Then:
\[
\mathrm{Hess}(F) = 0 \ \ \Longrightarrow \ \ \mathrm{cr}(F)>\underline{{\bf r}}(F)=n+1  \ \ \Longrightarrow \ \ F  \mbox{ is wild}. 
\]
\begin{proof}
Let $W = \langle \frac{\partial}{\partial x_i} F\rangle\subset \PP(S^{d-1} V^{*})$ be the linear space spanned by the first 
derivatives of $F$. Let $\mathcal I =  \langle\mathrm{Ann}(F)_{d-1}\rangle$ be the homogeneous ideal generated by the degree $(d-1)$
homogeneous piece of the annihilator of $F$. 

Note that $W^{\perp} = \mathcal I_{d-1}$. Indeed, the inclusion $\mathcal I_{d-1}\subset W^{\perp}$ is clear by definition. To see
the converse, let $h\in W^{\perp}$. Hence $h\circ \left(\frac{\partial}{\partial x_i} F\right)=0$ for every $i$; the latter implies $\frac{\partial}{\partial x_i}\left(h\circ F\right)=0$
for every $i$, where $h\circ F$ is a linear form. Thus $h\circ F=0$ and so $h\in \mathcal I_{d-1}$. 

Let $R^{W}$ be the projective scheme in $\PP^n$ defined by $\mathcal I^{sat}$, which is a priori possibly empty.  As $\left(\mathcal I^{sat}\right)_{d-1} \supseteq \mathcal I_{d-1}$, we have: 
\[
W = W^{\perp\perp} = \left(\mathcal I_{d-1}\right)^{\perp} \supseteq \left((\mathcal I^{sat})_{d-1}\right)^{\perp}. 
\]

Assume the saturated ideal $\mathcal I^{sat}$ does not contain any linear form. By definition, this is equivalent to the subscheme $R^{W}\subset \PP^n$ spanning the whole $\PP^n$.
By Proposition \ref{transcdeg}, $\left((\mathcal I^{sat})_{d-1}\right)^{\perp}$ is spanned by at least $n+1$ algebraically independent forms. However, $W$ has dimension exactly $n+1$, so $W = \left((\mathcal I^{sat})_{d-1}\right)^{\perp}$ and a basis of $W$ consists of $n+1$ algebraically independent forms. Therefore, the derivatives of $F$ must be algebraically independent and so $\mathrm{Hess}(F) \neq 0$; see \cite[\S 7.2]{russo}. This shows that whenever $F$ is concise and has vanishing Hessian, $\mathcal I^{sat}$ must contain a linear form.

Suppose $F$ is concise with $\mathrm{Hess}(F) = 0$ and of minimal border rank. Then the Hilbert function of $\mathrm{Ann}(F)$ is as follows: 
\[
\mathrm{HF}(T/\mathrm{Ann}(F)): 1 \ \ (n+1) \ \ \ldots \ \ (n+1) \ \ 1
\]

Now, we show by contradiction the first implication in the statement. Suppose the cactus rank of $F$ satisfies $\mathrm{cr}(F)\leq n+1$.

Let $\mathcal J\subset \mathrm{Ann}(F)$ be any saturated ideal evincing the cactus rank of $F$, i.e. the zero-dimensional scheme defined by $\mathcal J$ has degree $\mathrm{cr}(F)$. Since its Hilbert function $\mathrm{HF}(T/\mathcal J)$ is non-decreasing until it stabilizes to the constant polynomial $\mathrm{cr}(F)\in \NN$ \cite[Theorem 1.69]{ik}, one has
\[
\dim\left(T/\mathcal J\right)_{d-1}\leq n+1 = \dim\left(T/\mathcal I\right)_{d-1}. 
\]
On the other hand, $\mathcal J_{d-1}\subset \mathcal I$ and so 
\[
\dim\left(T/\mathcal J\right)_{d-1}\geq \dim\left(T/\mathcal I\right)_{d-1}. 
\]
The inequalities imply $\mathcal J_{d-1} = \mathcal I_{d-1}$. 

Now, $\mathcal I^{sat}\subset \mathcal J^{sat} = \mathcal J$. Hence $\mathcal J$ contains a linear form, i.e. 
\[
\dim\left(T/\mathcal J\right)_1\leq n. 
\]
On the other hand, since $\mathcal J\subset \mathrm{Ann}(F)$, one has:
\[
n+1 = \dim\left(T/\mathrm{Ann}(F) \right)_1\leq \dim\left(T/\mathcal J\right)_1\leq n,
\]
which is a contradiction. Therefore $\mathrm{sr}(F)\geq \mathrm{cr}(F) > n+1 = \underline{{\bf r}}(F)$. Hence $F$ is wild. 
\end{proof}

\end{theorem}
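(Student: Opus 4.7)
The statement splits into two implications; the second, namely $\mathrm{cr}(F) > \underline{{\bf r}}(F) \Rightarrow F$ wild, is essentially cost-free from the inequality $\mathrm{sr}(F) \geq \mathrm{cr}(F)$ recorded in \S\ref{prelim}. So the real content is the first implication: under the concise minimal border rank hypothesis, $\mathrm{Hess}(F)=0$ forces $\mathrm{cr}(F) > n+1$.

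The plan is to exploit the classical link between vanishing Hessian and algebraic dependence of first partial derivatives. Write $W = \langle \partial F/\partial x_0, \ldots, \partial F/\partial x_n\rangle \subset S^{d-1}V^*$. By conciseness, $\dim W = n+1$ (a linear relation among partials would produce a linear form in $\mathrm{Ann}(F)$). Introduce the truncated ideal $\mathcal I = \langle \mathrm{Ann}(F)_{d-1}\rangle$. The first step is to verify that $W = (\mathcal I_{d-1})^\perp$ using apolarity: $h \in \mathrm{Ann}(F)_{d-1}$ iff $h \circ (\partial F/\partial x_i) = 0$ for every $i$, which is exactly the orthogonality condition.

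Now the main lever is Proposition \ref{transcdeg} applied to the subscheme $R^W \subset \PP^n$ cut out by $\mathcal I^{sat}$. I will argue by contradiction: suppose $\mathcal I^{sat}$ contains no linear form, i.e.\ $R^W$ spans $\PP^n$. Then the proposition gives $n+1$ algebraically independent degree $(d-1)$ forms inside $(\mathcal I^{sat}_{d-1})^\perp \subseteq W$; since $\dim W = n+1$ exactly, these forms must be a basis of $W$ itself. That means the partials of $F$ are algebraically independent, which is equivalent to $\mathrm{Hess}(F) \neq 0$ (Gordan--Noether/standard Jacobian criterion, cf.\ \cite[\S 7.2]{russo}). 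Contrapositively, $\mathrm{Hess}(F) = 0$ forces $\mathcal I^{sat}$ to contain a linear form.

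The second step is to translate this into a statement about the cactus rank. Assume for contradiction that $\mathrm{cr}(F) \leq n+1$, and pick a saturated ideal $\mathcal J \subset \mathrm{Ann}(F)$ defining a scheme of length $\mathrm{cr}(F)$. Because $F$ has minimal border rank and is concise, a catalecticant count (combined with Gorenstein symmetry) shows $\dim (T/\mathrm{Ann}(F))_{d-1} = n+1$. On the other hand the Hilbert function of $T/\mathcal J$ is non-decreasing up to the constant value $\mathrm{cr}(F)$ (Iarrobino--Kanev \cite[Theorem~1.69]{ik}), so $\dim(T/\mathcal J)_{d-1} \leq n+1$. Combined with the inclusion $\mathcal J_{d-1} \subset \mathcal I_{d-1}$, this pins down $\mathcal J_{d-1} = \mathcal I_{d-1}$. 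Since $\mathcal I$ is generated in degree $d-1$, we then have $\mathcal I \subset \mathcal J$, hence $\mathcal I^{sat} \subset \mathcal J^{sat} = \mathcal J$. The linear form produced in the previous paragraph lands in $\mathcal J \subset \mathrm{Ann}(F)$, contradicting conciseness. I expect the delicate point to be the bookkeeping around saturations and the Hilbert function bound at degree $d-1$ (the symmetry argument forcing the value $n+1$ there); the Gordan--Noether translation of vanishing Hessian into algebraic dependence of partials is classical and goes through immediately.
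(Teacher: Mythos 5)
Your proposal is correct and follows essentially the same route as the paper: the apolarity identification $W = (\mathcal I_{d-1})^{\perp}$, the invocation of Proposition~\ref{transcdeg} applied to $R^W$ to force a linear form into $\mathcal I^{sat}$ when $\mathrm{Hess}(F)=0$, the use of Gorenstein symmetry to pin $\dim(T/\mathrm{Ann}(F))_{d-1}=n+1$, and the Hilbert function bound on $T/\mathcal J$ to get $\mathcal J_{d-1}=\mathcal I_{d-1}$, hence $\mathcal I^{sat}\subset\mathcal J$, contradicting conciseness. If anything you make explicit one step the paper leaves implicit, namely that $\mathcal I$ being generated in degree $d-1$ is what upgrades $\mathcal J_{d-1}=\mathcal I_{d-1}$ to $\mathcal I\subset\mathcal J$.
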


\begin{remark}
Keep the notation from Theorem \ref{hessvswild}. It is clear that for $d=2$ and any $n\geq 1$, the condition $\mathrm{Hess}(F)=0$ is equivalent to $F$ being not concise. Also, for $d=3$ and $n\leq 3$, 
$\mathrm{Hess}(F)=0$ is equivalent to $F$ being not concise \cite[Theorem 7.1.4]{russo}. A complete classification is known up to $n\leq 6$; see \cite[\S 7.6]{russo}
for a detailed discussion. 
\end{remark}

\begin{remark}[{\bf Limits of catalecticants and vector bundles}]\label{limits}

Let $F\in S^d V^{*}$ be a form with annihilator $\mathcal I = \mathrm{Ann}(F)\subset T$. For each $0\leq i\leq d$, $F$ induces 
a linear map
\[
\mathrm{Cat}_{i, d-i}: S^{i} V \longrightarrow S^{d-i} V^{*},\ \ h\mapsto h\circ F. 
\]
The map $\mathrm{Cat}_{i,d-i}$ is the $i$-th {\it catalecticant}. Its kernel satisfies the equality $\mathrm{Ker}(\mathrm{Cat}_{i, d-i}) = \mathcal I_{i}$,
and therefore $\mathrm{HF}(T/\mathcal I, i)$ is the rank of the matrix representing the $i$-th catalecticant. 

The rank of any catalecticant is a lower bound to  $\underline{{\bf r}}(F)$, the border rank of $F$. Ga\l{}\k{a}zka \cite{Ga17} showed
that degeneracy conditions of vector bundles vanish on cactus varieties. In our context, this means that the rank of any catalecticant (and so any value $\mathrm{HF}(T/\mathcal I, i)$) gives a lower bound to $\mathrm{cr}(F)$, the cactus rank of $F$. It is inherently difficult to detect the true cactus rank when it is not witnessed by the rank of some catalecticant (which is the case, in the situation of Theorem \ref{hessvswild}), because
one has to choose suitable (strictly contained) linear subspaces of the vector spaces $\mathcal I_i$, which is very hard in practice. Furthermore, one has to construct linear subspaces so that the resulting ideal of a zero-dimensional scheme has the minimal degree allowed. 
In conclusion, for explicit forms (and, even more so, for a sequence of forms), calculating cactus rank in several variables is usually a daunting task. 

Moreover, the smoothable rank of a form is a priori even more difficult to compute explicitly: the general obstacle to overcome is being able to recognize ideals of smoothable schemes of some length $r$ in a given projective space $\PP^n$ (whenever the Hilbert scheme $\mathrm{Hilb}_r(\PP^n)$ is reducible). However, these are largely unknown. 
\end{remark}

\begin{example}
The assumption of minimal border rank is independent from the vanishing Hessian condition. Let $F = v_0u_0^3 + v_1u_0^2u_1 + v_2u_0u_1^2\in S^4 \CC^{5*}$. So $\mathrm{Hess}(F) = 0$, as the derivatives with respect to $v_i$ are algebraically dependent. One has 
\[
\mathrm{HF}(T/\mathrm{Ann}(F)) = 1 \ \ 5 \ \ 6 \ \ 5 \ \ 1. 
\]
By Remark \ref{limits}, we have $\underline{{\bf r}}(F)\geq 6$ and so $F$ is not of minimal border rank. 

For a cubic example, let $n\geq 8$. Consider $F = x_0x_{n-1}^2 + x_1x_{n-1}x_n + x_2x_n^2 + G(x_3,x_4,\ldots,x_n)\in S^3 \CC^{(n+1)*}$, 
with $G = G(x_3,x_4,\ldots, x_n)\in S^3\CC^{(n-2)*}$ being a general cubic form. By an algebraic relation among the derivatives of $F$ with respect to $x_0, x_1, x_2$, one has $\mathrm{Hess}(F) = 0$. Moreover, we have $\underline{{\bf r}}(F)\geq  \underline{{\bf r}}(G)$, for $G$ is a degeneration of $F$. Whence the border rank of $F$ is generically higher than minimal. 
\end{example}

In the {\it non-minimal} border rank regime, we give the following examples. 

\begin{example}[{\bf Wild + Vanishing Hessian}]\label{wildnonminbr}
Let $H_5 = v_0u_0^4 + v_1u_0^2u_1^2 + v_2u_1^4\in S^5 (\CC^5)^{*}$. Since the partial derivatives with respect to the $v_i$ are algebraically dependent, one has $\mathrm{Hess}(H_5)=0$. A computer algebra 
calculation reveals 
\[
\mathrm{HF}(T/\mathrm{Ann}(H_5)): 1 \ \ 5 \ \ 7 \ \ 7 \ \ 5 \ \ 1. 
\]
By Lemma \ref{borderankGd}, the form $G_5$ satisfies $\underline{{\bf r}}(G_5) = 7$. Since $H_5$ is a degeneration of $G_5$ (i.e. some of the $v_i$ appearing in $G_5$ are sent to zero), $\underline{{\bf r}}(H_5) \leq \underline{{\bf r}}(G_5) = 7$. On the other hand, since the values of the Hilbert function of $\mathrm{Ann}(F)$ give a lower bound for $\underline{{\bf r}}(F)$ (see Remark \ref{limits}), one derives the equality $\underline{{\bf r}}(H_5)=7$. 

As recalled in Remark \ref{limits}, the values of the Hilbert function of $\mathrm{Ann}(F)$ give a lower bound for the cactus rank $\mathrm{cr}(F)$ as well. Therefore $\mathrm{cr}(H_5)\geq 7$. We now show that $\mathrm{cr}(H_5) > 7 = \underline{{\bf r}}(H_5)$, thus proving the wildness of $H_5$. 

For the sake of contradiction, assume $\mathrm{cr}(H_5) = 7$ and let $\mathcal J$ be the saturated ideal of a zero-dimensional scheme of length $7$ such that $\mathcal J\subset \mathrm{Ann}(H_5)$. 

Since $J\subset \mathrm{Ann}(H_5)$, we have $\mathrm{HF}(T/\mathcal J, 2)\geq \mathrm{HF}(T/\mathrm{Ann}(H_5), 2)= 7$ and
$\mathrm{HF}(T/\mathcal J, 3)\geq \mathrm{HF}(T/\mathrm{Ann}(H_5), 3)= 7$. Since $\mathcal J$ is the ideal of a zero-dimensional scheme of length $7$,
by the stabilization of its Hilbert function, we thus conclude $\mathrm{HF}(T/\mathcal J, 2)= \mathrm{HF}(T/\mathcal J, 3) = 7$. Therefore we have the equalities $\mathcal J_2 = \mathrm{Ann}(H_5)_2$ and $\mathcal J_3 = \mathrm{Ann}(H_5)_3$, and hence $\mathcal I = \mathrm{Ann}(H_5)_{\leq 3}\subset \mathcal J$.

Using a computer algebra system, one checks that the saturation of $\mathcal I$ satisfies the equality $\mathcal I^{sat} = (x_0, x_1,x_2)$, where $x_i$ is the dual form to $v_i$. Since $\mathcal I^{sat}\subset \mathcal J^{sat} = \mathcal J$, in particular the ideal $\mathcal J$ contains a linear form. As in the very last part of the proof of Theorem \ref{hessvswild}, this leads to a contradiction. In conclusion, $\mathrm{sr}(H_5)\geq \mathrm{cr}(H_5) > 7=\underline{{\bf r}}(H_5)$ and $H_5$ is wild.
\end{example}

\begin{example}[{\bf Wild + Non-vanishing Hessian}]\label{wildwithnozerohess}
Let $F=v_0u_0^3u_1 + v_1u_0u_1^3+v_0^3v_1^2\in S^5\CC^{4*}$. This satisfies $\mathrm{Hess}(F)\neq 0$. Let $A = T/\mathrm{Ann}(F)$ and let  $\lbrace \alpha_i^{(2)}\rbrace$ be a basis of $A_2$. Then the second Hessian $\mathrm{det}(\alpha_i^{(2)}\alpha_j^{(2)} F) = \mathrm{Hess}^2(F) = 0$. This is a classical example due to Ikeda \cite{Ikeda}, and further revisited by Maeno and Watanabe \cite[Example 5.3]{MW}. This example was independently found by Dias and Gondim \cite[Example 3.15]{DG}, along with other interesting families of instances with the property of being wild and yet having non-vanishing Hessian. 

Now, turning to details, a computer algebra system computation reveals: 
\[
\mathrm{HF}(T/\mathrm{Ann}(F)) = 1 \ \ 4 \ \ 10 \ \ 10 \ \ 4 \ \ 1.
\]
Note that $\mathrm{Ann}(F)_2 = 0$. We first show that $\underline{{\bf r}}(F)= 10$. From the Hilbert function values and by Remark \ref{limits}, we have $\underline{{\bf r}}(F)\geq 10$. To see the upper bound, let us divide the monomials of $F$ as follows: 
\[
F = L_5 + v_0^3v_1^2.
\]
Since border rank is subadditive, we have $\underline{{\bf r}}(F)\leq \underline{{\bf r}}(L_5) + \underline{{\bf r}}(v_0^3v_1^2)$. 
A straightforward computation gives $\underline{{\bf r}}(v_0^3v_1^2)=3$. Now, Lemma \ref{borderankGd} below shows that
$\underline{{\bf r}}(G_5) = 7$. Note that $L_5$ is a degeneration of $G_5$ (i.e. some of the $v_i$ appearing in $G_5$ are sent to zero). 
Thus $\underline{{\bf r}}(L_5)\leq 7$. In fact, we see that 
\[
\mathrm{HF}(T/\mathrm{Ann}(L_5)) = 1 \ \ 4 \ \ 7 \ \ 7 \ \ 4 \ \ 1,
\]
and hence $\underline{{\bf r}}(L_5)= 7$. In conclusion, the equality  $\underline{{\bf r}}(F)= 10$ follows. 

For the sake of contradiction, suppose $\mathrm{cr}(F)=10$ and let $\mathcal J\subset \mathrm{Ann}(F)$ be the saturated
ideal of a zero-dimensional scheme of length $10$. Recall that $\mathrm{Ann}(F)_2=0$ and so is $\mathcal J_2$. 
Since $\mathcal J\subset \mathrm{Ann}(F)$, one has $\mathrm{HF}(T/\mathcal J, 3)\geq \mathrm{HF}(T/\mathrm{Ann}(F), 3)=10$. 
By the stabilization property of the Hilbert function, it follows that $\mathrm{HF}(T/\mathcal J, 3)=10$ and so $\mathcal I = \mathrm{Ann}(F)_{\leq 3}\subset \mathcal J$. Therefore $\mathcal I^{sat} \subset \mathcal J^{sat} = \mathcal J$. 

It is a direct calculation to show that $\mathcal I^{sat}\subset T$ contains a quadric and so does $\mathcal J$. This leads to a contradiction as $\mathcal J_2=0$. In conclusion, we must have $\mathrm{sr}(F)\geq \mathrm{cr}(F)>10 = \underline{{\bf r}}(F)$, and hence $F$ is wild. 
\end{example}

\begin{example}[{\bf Non-wild + Vanishing Hessian}]
Let $F= v_0u_0^3 + v_1u_1^3 + v_2(u_0+u_1)^3 \in S^4 (\CC^5)^{*}$. 
Then it is straightforward to check that $\mathrm{Hess}(F) = 0$ and $\underline{{\bf r}}(F) = 6$. Note that $F$ is in the span of 
a scheme $R$ consisting of three $2$-jets and so smoothable. Thus $\mathrm{sr}(F) = 6$.
Therefore $F$ is not wild. 
\end{example}

\begin{proposition}
Keep the notation from Theorem \ref{hessvswild}. Let $F$ be a concise and minimal border rank cubic. If $F$ is not a wild cubic and $R^W$ is reduced, then $F$ is a Fermat
cubic (up to scaling variables) $F = x_0^3+\cdots + x_n^3$, and $\underline{\mathrm{VSP}}(F, n+1)$ is a single point. 
\begin{proof}
Since $F$ is concise, of minimal border rank and it is not wild, the proof of Theorem \ref{hessvswild} yields that $\mathcal{I}^{sat}$ does not contain any linear form. This is equivalent to $\langle R^W \rangle = \mathbb{P}^n$, where $W =  \langle \frac{\partial}{\partial x_i} F\rangle\subset \PP(S^{2} V^{*})$. 
Since $R^{W}$ is a reduced scheme by assumption, we may find a reduced zero-dimensional subscheme $Z\subset R^{W}$ of length $n+1$ such that $\langle Z\rangle = \PP^n$. Up to change of basis, we have:
\[
\left((\mathcal I_Z)_2\right)^{\perp} = \langle x_0^{2},\ldots, x_n^{2}\rangle \subseteq \left((\mathcal{I}^{sat})_2\right)^{\perp} \subseteq W.
\]
Since $\dim W = n+1$, we have $W = \langle x_0^{2},\ldots, x_n^{2}\rangle$ and
\[
\mathcal I = \mathrm{Ann}(F)_{2} = W^{\perp} = \langle x_0^{2},\ldots, x_n^{2}\rangle^{\perp}.
\]
Thus $\mathrm{Ann}(F)_{3} = \langle x_0^3, \ldots, x_n^3\rangle^{\perp}$. Then, since every cubic monomial divisible by two distinct 
variables is in $\mathrm{Ann}(F)_{3}$, $F$ is a Fermat cubic up to the action of a diagonal matrix. Now, the Hilbert function of $\mathcal I$ is:
\[
\mathrm{HF}(T/\mathcal I): 1 \ \ (n+1) \ \ (n+1) \ \ (n+1) \ \ \cdots
\]
For $k=0,1,2$, the dimension of $\mathcal I_k$ is clear from definitions. To see the dimension of $\mathcal I_{k}$ for $k\geq 3$, note that $y_0^{k},\ldots, y_n^{k}\notin \mathcal I_{k}$ and they are the only missing monomials.  Let $\mathcal J\in \underline{\mathrm{VSP}}(F, n+1)$. Then its Hilbert function is
\[
\mathrm{HF}(T/\mathcal J): 1 \ \ (n+1) \ \ (n+1)  \ \ (n+1) \ \ \cdots, 
\]
because $\mathcal J\in \mathrm{Slip}_{n+1,\PP^n}$. Since $\mathcal J\subset \mathrm{Ann}(F)$, we have the equality $\mathcal I = \mathcal J$. \end{proof}
\end{proposition}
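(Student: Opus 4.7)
The plan is to combine the structural content of the proof of Theorem \ref{hessvswild} with the reducedness assumption on $R^W$ to pin down $F$ explicitly, and then to read the $\underline{\mathrm{VSP}}$ off the resulting apolar ideal.

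The starting point is an observation already extracted in the proof of Theorem \ref{hessvswild}: for a concise cubic $F$ of minimal border rank that is not wild, the saturated ideal $\mathcal{I}^{sat}$ cannot contain any linear form, equivalently $\langle R^W\rangle = \PP^n$. Since $R^W$ is reduced by hypothesis, I would choose a reduced zero-dimensional subscheme $Z \subseteq R^W$ of length $n+1$ spanning $\PP^n$; after a change of coordinates $Z$ is the coordinate simplex, its saturated ideal is the monomial ideal $\mathcal M = (y_iy_j : i \neq j)$, and $((\mathcal I_Z)_2)^{\perp} = \langle x_0^2,\ldots,x_n^2\rangle$. The inclusions $\mathcal I \subseteq \mathcal I^{sat} \subseteq \mathcal I_Z$ dualize in degree $2$ to $\langle x_0^2,\ldots,x_n^2\rangle \subseteq W$, and conciseness ($\dim W = n+1$) forces equality. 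Hence $\mathrm{Ann}(F)_2 = W^\perp = \langle y_iy_j : i \neq j\rangle$: every cubic monomial of $F$ that mixes two distinct variables must vanish, so $F$ is a linear combination of pure cubes $x_i^3$, and conciseness plus a rescaling of the variables yields the Fermat cubic $F = x_0^3 + \cdots + x_n^3$.

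For the $\underline{\mathrm{VSP}}$ claim, the monomial ideal $\mathcal M = (y_iy_j : i\neq j)$ is the radical ideal of the $n+1$ coordinate points, has Hilbert function $(1,n+1,n+1,\ldots)$, and is contained in $\mathrm{Ann}(F)$, so $\mathcal M \in \underline{\mathrm{VSP}}(F,n+1)$. To see it is the only point, let $\mathcal J \in \underline{\mathrm{VSP}}(F,n+1)$. In degree $2$, the generic Hilbert function condition gives $\dim \mathcal J_2 = \binom{n+2}{2} - (n+1) = \binom{n+1}{2} = \dim \mathrm{Ann}(F)_2$, and combined with $\mathcal J_2 \subseteq \mathrm{Ann}(F)_2$ this forces $\mathcal J_2 = \mathrm{Ann}(F)_2 = \mathcal M_2$. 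Hence $\mathcal M = (\mathcal J_2) \subseteq \mathcal J$, and since $T/\mathcal J$ and $T/\mathcal M$ share the same Hilbert function in every degree, $\mathcal J = \mathcal M$.

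The main technical step is the identification $W = \langle x_0^2,\ldots,x_n^2\rangle$, for which the reducedness of $R^W$ is exactly what is needed to produce a concrete reduced spanning subscheme $Z$ and thus pin down $W$ up to a coordinate change; the rest of the argument is then nearly formal, driven by the extreme rigidity of the apolar ideal of a Fermat cubic.
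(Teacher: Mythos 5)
Your proof is correct and follows essentially the same route as the paper: extract $\langle R^W\rangle = \PP^n$ from the non-wildness argument, use reducedness to pick a spanning simplex $Z$, dualize to force $W = \langle x_0^2,\ldots,x_n^2\rangle$, conclude $F$ is Fermat, and then compare Hilbert functions to show the monomial ideal $(y_iy_j : i\neq j)$ is the unique element of $\underline{\mathrm{VSP}}(F,n+1)$. The only (cosmetic) difference is that you spell out the degree-two dimension count and the step $(\mathcal J_2) = \mathcal M \subseteq \mathcal J$ a bit more explicitly than the paper does, which compresses this into the final sentence.
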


Repeating part of the proof above, one shows: 

\begin{proposition}\label{uniquenesswhensaturated}
Keep the notation from Theorem \ref{hessvswild}. Let $F$ be a concise and minimal border rank cubic. If $F$ is not a wild cubic and $\mathcal I =\mathrm{Ann}(F)_2$ is saturated of degree $n+1$, then $\underline{\mathrm{VSP}}(F, n+1) = \lbrace \mathcal I\rbrace$.  
\end{proposition}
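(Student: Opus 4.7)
The plan is to mimic the argument sketched in the preceding proposition, but to replace the reducedness hypothesis on $R^W$ by the direct saturation hypothesis on $\mathcal I$. Given any $\mathcal J\in \underline{\mathrm{VSP}}(F, n+1)$, I want to show $\mathcal J = \mathcal I$, and the strategy is to first force equality in degree two from a Hilbert-function squeeze, and then upgrade to full ideal equality via the saturation assumption.

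First I would record the Hilbert function of $T/\mathrm{Ann}(F)$: since $F$ is a concise cubic of minimal border rank, $\dim V=n+1$, the socle degree is $3$, and Gorenstein duality forces
\[
\mathrm{HF}(T/\mathrm{Ann}(F)): \ 1 \ \ (n+1) \ \ (n+1) \ \ 1.
\]
By definition of $\mathrm{Slip}_{n+1,\PP^n}$, any $\mathcal J\in \underline{\mathrm{VSP}}(F,n+1)$ has generic Hilbert function $\mathrm{HF}(T/\mathcal J,k)=\min\{n+1,\dim S^kV\}$, which equals $n+1$ for every $k\geq 1$. The inclusion $\mathcal J\subset \mathrm{Ann}(F)$ combined with $\dim \mathcal J_2\leq \dim \mathrm{Ann}(F)_2 = \binom{n+1}{2}$ and the matching total dimension $n+1$ in degree two forces $\mathcal J_2 = \mathrm{Ann}(F)_2 = \mathcal I_2$. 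Since $\mathcal I$ is generated in degree two by construction, this yields the ideal-theoretic inclusion $\mathcal I\subset \mathcal J$.

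Next I would pass to saturations. The ideal $\mathcal J$ has Hilbert polynomial $n+1$ (from the stabilization of the generic Hilbert function), so $\mathcal J^{sat}$ is the saturated ideal of a zero-dimensional subscheme of $\PP^n$ of length $n+1$. By hypothesis, $\mathcal I$ is already saturated and defines a zero-dimensional scheme of the same length $n+1$. The chain $\mathcal I\subset \mathcal J\subset \mathcal J^{sat}$ corresponds to a containment $V(\mathcal J^{sat})\subset V(\mathcal I)$ of two zero-dimensional schemes of equal length, which forces equality of schemes and, by saturatedness, equality $\mathcal J^{sat}=\mathcal I$ of ideals. Squeezing $\mathcal I\subset \mathcal J\subset \mathcal J^{sat}=\mathcal I$ yields $\mathcal J=\mathcal I$, as desired.

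The argument is essentially formal once the Hilbert-function bookkeeping is in place; there is no real obstacle. The only point that requires some care is the transition from the inclusion in degree two to the global inclusion of ideals: this relies on the fact that $\mathcal I=\langle \mathrm{Ann}(F)_2\rangle$ is generated in a single degree, so that matching a single graded piece is enough. All the remaining ingredients—the generic Hilbert function of ideals in $\mathrm{Slip}_{n+1,\PP^n}$, the Gorenstein symmetry of $\mathrm{HF}(T/\mathrm{Ann}(F))$, and the rigidity of zero-dimensional schemes of equal length under inclusion—are standard and have already been used in the preceding proof.
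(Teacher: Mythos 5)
Your argument is correct and follows essentially the same route as the paper's (which says only ``repeating part of the proof above''): both run the degree-two Hilbert-function squeeze to get $\mathcal J_2 = \mathcal I_2$ and then use that $\mathcal I$ is generated in degree two to conclude $\mathcal I\subset \mathcal J$. Your endgame, passing through $\mathcal J^{sat}$ and comparing zero-dimensional schemes of equal length, is a harmless variant of the paper's direct comparison of the Hilbert functions of $T/\mathcal I$ and $T/\mathcal J$.
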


\begin{example}
Let $F = x_0^2x_1\in S^3 \CC^{2*}$. In this case, $\mathcal I = \mathrm{Ann}(F)_2 = \langle y_1^2\rangle\subset T$ is the ideal of a $2$-jet on a $\PP^1$. Proposition \ref{uniquenesswhensaturated} gives $\underline{\mathrm{VSP}}(F, 2) = \lbrace \mathcal I\rbrace$. 
\end{example}

\begin{example}
Let $F_{\mathrm{tg}} = x_1(x_0^2+x_1x_2) \in S^3 \CC^{3*}$ (a conic with a tangent line) or $F_{\mathrm{cusp}} = x_1^2x_2-x_0^3\in S^3 \CC^{3*}$ (a cuspidal cubic).  In both cases, $\mathcal I = \mathrm{Ann}(F)_2$ is saturated of degree $3$. For $F_{\mathrm{tg}}$, the scheme defined by $\mathcal I$ is the $2$-fat point (of length $3$) in $\PP^2$; for $F_{\mathrm{cusp}}$, the scheme defined by $\mathcal I$ is the union of a simple point and a $2$-jet. 

In both cases, Proposition \ref{uniquenesswhensaturated} yields $\underline{\mathrm{VSP}}(F, 3) = \lbrace \mathcal I\rbrace$. 
\end{example}

\section{Smoothable algebras, structure tensors and wild forms}\label{cubicssmoothablealgebras}

In this section, for the ease of notation, we regard $F\in S^d V$ as forms (instead of using duals). A tensor in $V^{\otimes d}$ is called a {\it $d$-way tensor}. 

Recall that a form $F\in S^d V$ is a symmetric tensor $T_{F}\in V^{\otimes d}$ (the identification is defined in characteristic zero). Here, we bring tensors into the picture in order to establish the converse to Theorem \ref{hessvswild}. With this aim at hand, we start making a detour through smoothable algebras and structure tensors. We shall demonstrate Theorem \ref{maincubics}, providing  a classification of wild forms of minimal border rank, thus complementing Theorem \ref{hessvswild}. 

In the rest, a finite-dimensional $\CC$-algebra $A$ is a finite-dimensional $\CC$-vector space with an associative unital multiplication. 

\begin{definition}[{\bf Smoothable algebras}]
Let $A$ be a (commutative) $\CC$-algebra of dimension zero as a ring and $n+1$ as a $\CC$-vector space. The algebra $A$ is said to be {\it smoothable} if it is a degeneration of the standard algebra structure on $\CC^{n+1}$. Equivalently, in scheme theory terminology, $\Spec(A)$ is a smoothable zero-dimensional scheme. The dimension of $A$ as a $\CC$-vector space is the length of $\Spec(A)$. 
\end{definition}

Since we deal with smoothable algebras $A$, henceforth we restrict our discussion to commutative algebras. For details about the subsequent
material, we refer to \cite[Lecture 8]{Geramita} or \cite[\S 3 and \S 16]{Lam}. 

A finite-dimensional algebra $A$ is an Artinian ring, so $A$ has finitely many maximal ideals. Let $A$ be a local finite-dimensional $\CC$-algebra (i.e., $A$ is a local ring) with maximal ideal $\mathfrak{m}$. Its {\it socle} is the set of all ring elements $a\in A$ such that $a\in (0:\mathfrak{m})$, i.e.~$a\cdot \mathfrak{m} = 0$. 

\begin{definition}[{\bf Gorenstein algebras}]\label{gorenstein}

Let $A$ be a local finite-dimensional $\CC$-algebra. The algebra $A$ is (local) {\it Gorenstein} if one of  the following two equivalent conditions hold true: 
\begin{enumerate}

\item[(i)] its socle is a one-dimensional $\CC$-vector space; 

\item[(ii)] there exists a perfect pairing $p: A\times A \rightarrow \CC$, defined by $p(a,b)=e(ab)$ for a linear form $e: A\rightarrow \CC$; see e.g. \cite[Theorem 3.15]{Lam}.

\end{enumerate}

A finite-dimensional algebra $A$ is said to be {\it Gorenstein} if every localization $A_{\mathfrak{m}}$ at a maximal ideal $\mathfrak{m}\subset A$
is a local Gorenstein algebra. 
\end{definition}

\begin{definition}[{\bf The $d$-way structure tensor of an algebra}]
Let $A$ be a unital, commutative finite-dimensional $\CC$-algebra. The multiplication map $m_{A}: A \times \ldots \times A \rightarrow A$ given by $m_A(a_1,a_2,\ldots,a_{d-1}) = a_1 a_2 \ldots a_{d-1}$ is multilinear and symmetric. Therefore, one may regard it as a partially symmetric tensor $T_A\in \mathrm{S}^{d-1} A^* \otimes A \subseteq A^{*}\otimes  \ldots \otimes A^* \otimes A$. The tensor $T_A$ is the {\it $d$-way structure tensor} of $A$. In particular, when $d=3$, this is the usual {\it structure tensor} of $A$ \cite{BL}. 

Let $V$ be a finite-dimensional $\CC$-vector space. An (abstract) tensor $T\in V^{*}\otimes \ldots \otimes V^* \otimes V$ is the $d$-way structure tensor of an algebra $A$ if $T$ is isomorphic to $T_A$, i.e. if there exist $d$ linear isomorphisms $\phi_1,\ldots, \phi_{d}$ such that $(\phi_1\otimes \phi_2\otimes \ldots \otimes \phi_d)(T) = T_A$, where $\phi_i$ is a  linear isomorphism among the $i$-th factors.

We summarize the previously known results about ($3$-way) structure tensors.

\end{definition}

\begin{definition}[{\bf 1-generic tensors}]
Let $T\in V\otimes V\otimes V$ be a tensor. Then $T$ is $1$-{\it generic} if its contraction in {\it every} factor $T(V^{*})$ is a linear space
containing a full-rank matrix $M\in T(V^{*})\subset V\otimes V$. If $T$ is symmetric, it is enough to require that a contraction in only one factor has the desired property.
\end{definition}

\begin{remark}\label{partialonegeneric}
Let $V_1, V_2$ and $V_3$ be three finite-dimensional $\CC$-vector spaces. Suppose $T\in V_1\otimes V_2\otimes V_3$ 
is a structure tensor of some algebra $A$. Then the linear spaces $T(V_1^{*})\subset V_2\otimes V_3$ and $T(V_2^{*})\subset V_1\otimes V_3$ 
contain full rank elements. ($T$ is said to be {\it binding} \cite[Lemma 3.5]{BL}, or $1_{V_1}$- and $1_{V_2}$-generic \cite[\S 1]{LM}.)
\end{remark}

Bl\"{a}ser and Lysikov characterized $3$-way structure tensors of algebras of minimal border rank 
as the ones coming from smoothable algebras \cite[Theorem 3.2, Corollary 3.3]{BL}: 

\begin{theorem}\label{blthm}
A $3$-way tensor has minimal border rank and is $1_{V_1}$- and $1_{V_2}$-generic if and only if it is isomorphic to a structure tensor $T_A$ of a smoothable algebra $A$. 
\end{theorem}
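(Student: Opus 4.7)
Let $A$ be a smoothable algebra with $\dim_{\CC} A = n+1$. By definition there is a flat family $A_t$ with $A_t \cong \CC^{n+1}$ for $t \neq 0$ and $A_0 = A$. In an idempotent basis of $\CC^{n+1}$ the structure tensor equals $\sum_{i=1}^{n+1} e_i^{*} \otimes e_i^{*} \otimes e_i$, of tensor rank exactly $n+1$; passing to the flat limit gives $\underline{{\bf r}}(T_A) \le n+1 = \dim A$, which is minimal. For 1-genericity, any covector $\alpha^{*} \in A^{*}$ with $\alpha^{*}(1_A) \neq 0$ contracts $T_A$ in factor 1 to a nonzero scalar multiple of the identity on $A$, which is full rank; factor 2 is analogous.

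\textbf{Hard direction, rank case.} I would first treat the situation where $T$ has rank exactly $n+1$, writing $T = \sum_{i=1}^{n+1} a_i \otimes b_i \otimes c_i$. The factor-1 contraction $T(\alpha^{*}) = \sum_i \alpha^{*}(a_i)\, b_i \otimes c_i$ is a sum of $n+1$ rank-one tensors in $V_2 \otimes V_3$; for it to attain rank $n+1$, the systems $\{b_i\}$ and $\{c_i\}$ must each be bases of $V_2, V_3$, and the scalars $\alpha^{*}(a_i)$ must all be nonzero. Symmetrically, 1-genericity in factor 2 forces $\{a_i\}$ to be a basis of $V_1$. Choosing dual identifications $V_1 \cong V_2 \cong V_3^{*}$ that carry $a_i$ and $b_i$ to the basis of $V_3^{*}$ dual to $\{c_i\}$ puts $T$ in diagonal form $\sum_i e_i^{*} \otimes e_i^{*} \otimes e_i$, the structure tensor of the split algebra $\CC^{n+1}$, which is trivially smoothable.

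\textbf{Border rank case and main obstacle.} For general border rank, write $T = \lim_{t \to 0} T_t$ with $\mathbf{r}(T_t) = n+1$. 1-genericity in a given factor is an open condition (the existence of a full-rank matrix in the image of the contraction map $V_i^{*} \to V_j \otimes V_k$ is preserved under small perturbations), so 1-genericity of $T$ propagates to $T_t$ for $t$ near $0$; the rank case then gives $T_t \cong T_{\CC^{n+1}}$ for each such $t$. The subtle step, which I expect to be the main obstacle, is to promote these pointwise isomorphisms into an algebraic flat family of algebra structures on $V_3$. To do this I would fix covectors $\alpha^{*} \in V_1^{*}$, $\beta^{*} \in V_2^{*}$ witnessing 1-genericity of $T$ itself, use the fact that $T_t(\alpha^{*})$ and $T_t(\beta^{*})$ remain full rank for small $t$ (by openness) to obtain algebraic identifications $V_1 \cong V_3^{*} \cong V_2$ that extend invertibly to $t=0$, and thereby view each $T_t$ as a multiplication tensor on $V_3$ varying algebraically in $t$. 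The limit multiplication defines an algebra structure on $V_3$ whose associativity is inherited as a closed polynomial condition from the family, and whose unit is the flat limit of the units of the $T_t$-algebras; by construction this limit algebra is smoothable and its structure tensor is $T$.
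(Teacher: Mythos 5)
The paper does not actually prove Theorem \ref{blthm}: it is quoted directly from Bl\"{a}ser--Lysikov \cite[Theorem~3.2 and Corollary~3.3]{BL}, so there is no in-paper argument to compare against. Your reconstruction follows the same broad strategy as that source (normalize a binding tensor via a $1$-genericity witness, recover a commutative unital algebra, and degenerate the rank decomposition to get smoothability). The easy direction and the rank case are fine modulo one notational slip: to exhibit $1_{V_1}$-genericity of $T_A\in A^{*}\otimes A^{*}\otimes A$ you should contract the first factor by $1_A\in A=(A^{*})^{*}$, producing $\mathrm{id}_A$, rather than by a covector $\alpha^{*}\in A^{*}$; contracting the $A$-factor by such an $\alpha^{*}$ instead yields the Gorenstein pairing and is not full rank in general.

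The genuine gap is at the step you yourself flag as subtle. After fixing $\alpha^{*}\in V_1^{*}$, $\beta^{*}\in V_2^{*}$ with $T(\alpha^{*})$, $T(\beta^{*})$ full rank and using $T_t(\alpha^{*})$, $T_t(\beta^{*})$ to carry each $T_t$ to a bilinear map $\mu_t\colon V_3\times V_3\to V_3$, you assert that $\mu_t$ is an algebra multiplication whose associativity and unit pass to the limit. That $\mu_t$ is an algebra for $t\neq 0$ does \emph{not} follow merely from the rank case: the isomorphism $T_t\cong T_{\CC^{n+1}}$ you established there uses the $t$-dependent bases of the rank decomposition, whereas here you impose a $t$-independent normalization, and in general a tensor lying in the same $\mathrm{GL}(V_1)\times\mathrm{GL}(V_2)\times\mathrm{GL}(V_3)$-orbit as the unit tensor is \emph{not} a multiplication tensor of any algebra. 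What rescues your argument, and must be supplied, is the explicit computation: writing $T_t=\sum_i a_i\otimes b_i\otimes c_i$ with all three families bases and $\alpha^{*}(a_i),\beta^{*}(b_i)\neq 0$ (both guaranteed by the rank case together with openness), one finds
\[
\mu_t(c_i,c_j)=\frac{\delta_{ij}}{\alpha^{*}(a_i)\,\beta^{*}(b_i)}\,c_i,
\]
which is visibly commutative, associative, and has unit $T_t(\alpha^{*},\beta^{*})=\sum_i\alpha^{*}(a_i)\beta^{*}(b_i)c_i$, converging to the nonzero vector $T(\alpha^{*},\beta^{*})$ as $t\to 0$. With this in place, closedness of commutativity and associativity and continuity of the unit make $\mu_0$ a commutative unital $\CC$-algebra structure on $V_3$, and constancy of the fibre length over (a neighbourhood of $0$ in) $\mathrm{Spec}\,\CC[t]$ gives flatness, hence smoothability. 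You should also note, briefly, that a border rank $n+1$ tensor lies on an algebraic curve whose generic point has tensor rank exactly $n+1$; this is standard but is used without comment.
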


Now we establish a similar result for $d$-way structure tensors, for arbitrary $d$, in Proposition~\ref{prop:Gor} through the following lemma. 

\begin{lemma}\label{lem:Hes1gen}
Let $F\in S^d V$ be a concise form regarded as a symmetric concise tensor  $T_F\in V^{\otimes d}$. The following statements are equivalent:
\begin{enumerate}
\item[(i)] $\mathrm{Hess}(F)\neq 0$;
\item[(ii)] there exists $\ell \in V^*$ such that the contraction $T_F(\ell^{\otimes d-2})$ is a full-rank symmetric matrix.
\end{enumerate}
In particular, a cubic $F\in S^3 V$ has non-vanishing Hessian if and only if $T_F$ is a $1$-generic symmetric tensor. 
\begin{proof}
Let $A = T/\mathrm{Ann}(F)$ be the Artinian Gorenstein $\CC$-algebra of $F$. The condition $\mathrm{Hess}(F)\neq 0$ is equivalent to the fact that there exists a linear form $\ell\in A_1$ such that $\phi_{\ell^{d-2}}: A_1\rightarrow A_{d-1}$ is an isomorphism (the map here is multiplication by $\ell^{d-2}$); see the proof of \cite[Theorem 3.1]{MW} or \cite[Theorem 7.2.20]{russo}. The map $\phi_{\ell^{d-2}}$ is an isomorphism if and only if the quadric $Q=\ell^{d-2}(F)\in S^2 V$ is non-degenerate.\end{proof}
\end{lemma}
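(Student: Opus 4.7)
The plan is to prove (i) $\Longleftrightarrow$ (ii) by passing through the Artinian Gorenstein algebra $A$ of $F$ and its multiplication maps, in the spirit of Maeno--Watanabe, bridging the Hessian condition and the rank of the contraction via an explicit coordinate identification.

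First, I would carry out a coordinate-level identification of the contraction with an apolar object. Writing $F=F(y_0,\ldots,y_n) \in S^d V$ and $\ell=\sum a_k x_k \in V^*$, a direct computation shows that $T_F(\ell^{\otimes d-2}) \in V\otimes V$ coincides, up to a nonzero universal scalar depending only on $d$, with the symmetric bilinear form on $V^*$ whose $(i,j)$-entry is the coefficient of $y_iy_j$ in $\ell^{d-2}\circ F$. Equivalently, $T_F(\ell^{\otimes d-2})$ is (up to scalar) the matrix of the quadric $Q_\ell := \ell^{d-2}\circ F \in S^2 V$ viewed as a quadratic form on $V^*$. In particular, $T_F(\ell^{\otimes d-2})$ has full rank if and only if $Q_\ell$ is a non-degenerate quadric.

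Next, I would invoke the result cited in the statement (\cite[Theorem 3.1]{MW}; see also \cite[Theorem 7.2.20]{russo}): since $F$ is concise, $A_1 \cong V^*$, and Gorenstein duality gives $\dim A_{d-1} = \dim A_1 = n+1$; moreover $\mathrm{Hess}(F)\neq 0$ if and only if there exists $\ell \in A_1$ such that multiplication by $\ell^{d-2}$ yields an isomorphism $\phi_{\ell^{d-2}}\colon A_1 \to A_{d-1}$. Because source and target have the same dimension, this is equivalent to injectivity of $\phi_{\ell^{d-2}}$, i.e.~to the triviality of
\[
\ker \phi_{\ell^{d-2}} = \{m \in V^* : m\ell^{d-2} \in \mathrm{Ann}(F)\} = \{m \in V^* : m\circ Q_\ell = 0\},
\]
which is exactly non-degeneracy of $Q_\ell$. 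Combined with the first step, this yields (i) $\Longleftrightarrow$ (ii).

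The last assertion then follows by specialization: for $d=3$ the contraction reduces to $T_F(\ell)$, and, as recalled just before the lemma, a symmetric tensor is $1$-generic precisely when its contraction in a single factor contains a full-rank element; hence the equivalence (i) $\Longleftrightarrow$ (ii) becomes exactly the claim for cubics. The main point requiring care is the first identification: one must keep careful track of factorials and symmetrization conventions so that apolar differentiation and tensor contraction are aligned and ranks match on the nose. Once that is pinned down, the rest of the argument is essentially bookkeeping using the Maeno--Watanabe criterion and a dimension count.
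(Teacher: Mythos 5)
Your proof is correct and takes essentially the same route as the paper: invoke the Maeno--Watanabe criterion to reduce non-vanishing of the Hessian to the existence of $\ell$ with $\phi_{\ell^{d-2}}\colon A_1 \to A_{d-1}$ an isomorphism, then identify that condition with non-degeneracy of the quadric $\ell^{d-2}\circ F$, i.e., full rank of $T_F(\ell^{\otimes d-2})$. You simply make explicit the coordinate/scalar bookkeeping and the kernel computation that the paper leaves implicit.
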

We will use the previous equivalence for concise forms of minimal border rank having non-vanishing Hessian. 
Before proceeding further, notice that if $F\in S^d V$ is a concise form of minimal border rank, then its corresponding tensor $T_F$ has minimal border (tensor) rank.

\begin{proposition}\label{prop:Gor}
Let $V$ be an $(n+1)$-dimensional $\CC$-vector space.
Let $F\in S^d V$ be a concise form of minimal border rank, and such that $\mathrm{Hess}(F)\neq 0$. 
Then $T_F$ is the $d$-way structure tensor of a smoothable Gorenstein algebra $A$. 

Moreover, if $T$ is the structure tensor of an $(n+1)$-dimensional smoothable algebra $A$, then $T$ is (isomorphic to) a symmetric tensor if and only if $A$ is a Gorenstein algebra.
\end{proposition}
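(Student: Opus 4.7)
The plan is to decouple the two statements of the proposition, proving the symmetry-versus-Gorenstein dictionary (the second statement) first and then deducing the first statement from this dictionary together with Bl\"{a}ser--Lysikov's Theorem~\ref{blthm}.

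For the dictionary, if $A$ is Gorenstein I fix a Frobenius form $\lambda\in A^*$ and apply the induced isomorphism $\phi_\lambda\colon A\to A^*$, $a\mapsto\lambda(a\cdot -)$, to the $A$-factor of $T_A\in S^{d-1}A^*\otimes A$; the result is a tensor in $(A^*)^{\otimes d}$ whose value on $(a_1,\dots,a_d)$ is $\lambda(a_1\cdots a_d)$, manifestly symmetric by commutativity of $A$. Conversely, if some isomorphism $\psi\colon A\to A^*$ makes the tensor $(\mathrm{id}^{\otimes d-1}\otimes\psi)(T_A)$ symmetric, I set $B(a,b):=\psi(a)(b)$: the symmetry under transposing slots $d-1$ and $d$ with the first $d-2$ entries equal to $1_A$ yields $B(a,b)=B(b,a)$, and transposing slots $2$ and $d$ with the other middle entries equal to $1_A$ yields $B(a,b)=\lambda(ab)$ for $\lambda:=\psi(1_A)$. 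Non-degeneracy of the resulting pairing follows from $\psi$ being an isomorphism, so $\lambda$ is a Frobenius form and $A$ is Gorenstein.

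For the first statement, Lemma~\ref{lem:Hes1gen} furnishes $\ell\in V^*$ such that $Q:=T_F(\ell^{\otimes d-2})\in S^2 V$ is non-degenerate. I pass to the symmetric three-way contraction $T'_F:=T_F(\ell^{\otimes d-3})\in V^{\otimes 3}$, which equals $T_F$ itself when $d=3$. It is $1$-generic, since its slice $T'_F(\ell)=Q$ has full rank, and it has minimal border rank: any border-rank-$(n+1)$ decomposition of $T_F$ contracts to one of $T'_F$, while $\underline{r}(T'_F)\geq\mathrm{rank}(Q)=n+1$. Theorem~\ref{blthm} then identifies $T'_F$ with the 3-way structure tensor of a smoothable $(n+1)$-dimensional algebra $A$ on $V$, and the dictionary just proven, applied to the symmetric $T^{(3)}_A\cong T'_F$, forces $A$ to be Gorenstein.

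The main obstacle will be promoting the identification $T'_F\cong T^{(3)}_A$ to the full equality $T_F\cong T^{(d)}_A$. Writing $\lambda\in A^*$ for the Frobenius form corresponding to $\ell$ (the unique one with $\phi_\lambda^{-1}(\lambda)=1_A$) and $F_{A,\lambda}(v):=\lambda(v^d)$, the equality of $(d-3)$-fold $\ell$-contractions forces $G:=F-F_{A,\lambda}$ to be annihilated by the differential operator $\ell^{d-3}$. To conclude $G=0$, my plan is to repeat the whole construction with $\ell$ replaced by any $\ell'$ in the Zariski-open subset of $V^*$ where $T_F((\ell')^{\otimes d-2})$ remains non-degenerate, track how the Frobenius form $\lambda_{\ell'}$ varies, and show that the corresponding candidate $F_{A,\lambda_{\ell'}}$ still agrees with $F_{A,\lambda}$; then $(\ell')^{d-3} G=0$ for a Zariski-dense family of $\ell'\in V^*$, and polarization forces $G\equiv 0$, hence $T_F=T^{(d)}_A$.
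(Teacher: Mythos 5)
Your proof of the second statement (the Gorenstein-versus-symmetry dictionary) is correct and essentially the paper's argument, with a small stylistic variant: you normalize the isomorphism to the shape $\mathrm{id}^{\otimes d-1}\otimes\psi$ and extract the Frobenius form $\lambda=\psi(1_A)$ by two explicit transpositions, while the paper argues via the existence of a full-rank contraction in the rightmost $(d-2)$ factors. Both routes work. (If you keep your version, you should briefly justify why, up to the diagonal rescaling you perform, one may indeed assume the isomorphism acts by the identity on the first $d-1$ factors; this follows because $T_A$ is already partially symmetric there, but it deserves a sentence.)

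For the first statement, however, there is a genuine gap, and it is exactly the one you yourself flag. After passing to the single $3$-way contraction $T'_F=T_F(\ell^{\otimes d-3})\in V^{\otimes 3}$ and invoking Theorem~\ref{blthm}, you have only recovered the $3$-way multiplication; you have lost all the information that distinguishes $T_F$ from the many degree-$d$ forms sharing this contraction. To recover $T_F=T^{(d)}_A$ you propose to vary $\ell'$ and claim that ``the corresponding candidate $F_{A,\lambda_{\ell'}}$ still agrees with $F_{A,\lambda}$'' — but this is not a fill-in-the-details step, it is the heart of the matter. For each $\ell'$ the construction yields an algebra structure $*_{\ell'}$ on $V^*$ that genuinely depends on $\ell'$ (the unit is $\ell'$ itself, for instance), and the Frobenius form changes accordingly; asserting that the resulting degree-$d$ forms all coincide is logically equivalent to the conclusion you are trying to prove. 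The normalization you use to pin down $\lambda$ (``the unique one with $\phi_\lambda^{-1}(\lambda)=1_A$'') is in fact vacuous, since $\phi_\lambda(1_A)=\lambda$ holds for any Frobenius form, so you have not actually pinned anything down. The paper avoids this issue by never collapsing to a $V^{\otimes 3}$ contraction at all: it works with $T_F\in(V^{\otimes d-2})\otimes V\otimes V$ as a $3$-way tensor, keeps the minimal-border-rank hypothesis at that level, and uses Strassen's commutator equations together with the $S_{d-1}$-symmetry of $T_F$ in its first $d-1$ slots to prove directly, by the chain of identities in the ``Structure property,'' that the matrices $M_{i_1}\cdots M_{i_{d-1}}$ reproduce all the higher coefficients $T_F^{i_1\cdots i_d}$. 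That computation is where the extra information beyond the $\ell$-contraction is actually used, and I do not see how your varying-$\ell'$ plan can be carried out without reproducing it in some form.
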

\begin{proof}
By Lemma~\ref{lem:Hes1gen}, we can find $\ell \in V^*$ such that the contraction $T_F(\ell^{\otimes d-2})$ is a full-rank symmetric matrix.  We first fix a linear basis $v_1 = \ell, v_2, \ldots,v_{n+1}$ of $V^*$. We may regard $T_F(\ell^{\otimes d-2})$ as a bilinear map $Q \colon V^* \times V^* \to \CC$. From here, we fix another linear basis $v_1',v_2',\ldots,v_{n+1}'$ of $V^*$ such that $Q(v_i,v_j') = \delta_{ij}$. Now, we define $T_F^{i_1 i_2 \ldots i_d}$ to be $T_F(v_{i_1}\otimes v_{i_2}\otimes \ldots \otimes v_{i_{d-1}} \otimes v_{i_d}')$. By definition, $T_F^{i_1 i_2 \ldots i_d}$ is symmetric with respect to the indices $(i_1 i_2 \ldots i_{d-1})$ and $T_F^{11\ldots1kl} = \delta_{kl}$.

We define the matrices $M_i := T_F^{1,1,\ldots,1,\bullet,i,\bullet}$ for $1 \leq i \leq n+1$. We will prove the following identities that are the {\it defining} properties 
of the $d$-way structure tensor of an algebra. 
\begin{itemize}
\item {\bf Commutativity}: 
\[ 
M_i M_j = M_j M_i.
\]
\item {\bf Closed under composition}: 
\[
M_i M_j = \sum_k T_F^{11 \ldots 1 i j k} M_k.
\]
\item {\bf Structure property}: 
\[
M_{i_1} M_{i_2} \ldots M_{i_{d-1}} = \sum_{j = 1}^{n+1} T_F^{i_1 i_2 \ldots i_{d-1} j} M_j .
\]
\end{itemize}

\noindent {\bf Commutativity}. Regard $T_F \in (V^{\otimes d-2}) \otimes V \otimes V$ as a $3$-way tensor. Since $F$ is concise of minimal border rank, the border rank of $T_F$ as a $3$-way tensor is also $n+1 = \dim V$. The commutativity of $\{M_i\}$ then follows from \cite[Lemma 2.6]{LM}.\\

\noindent {\bf Closed under composition}. Regard $T_F \in (V^{\otimes d-2}) \otimes V \otimes V$ as a $3$-way tensor and apply \cite[Proposition 2.10]{LM}.\\

\noindent {\bf Structure property}. Regarding once again $T_F$ as a $3$-way tensor, $T_F$ satisfies {\it Strassen's commutator equations} (see \cite{Str} or \cite[\S 2.1]{LM}),
which in the given coordinates are: 
\[
\sum_{k} T_F^{i_1 i_2 \ldots i_{d-2} j_1 k} T_F^{i_1' i_2' \ldots i_{d-2}' k j_2} = \sum_{k} T_F^{i_1' i_2' \ldots i_{d-2}' j_1 k} T_F^{i_1 i_2 \ldots i_{d-2} k j_2}.
\]
Using the symmetry of $T_F$ in the first $(d-1)$ indices, the equality $T_F^{11\ldots1kl} = \delta_{kl}$, and Strassen's equations, we have the following identities:
\begin{align*}
\sum_k  T_F^{1 j_1 j_2 \ldots j_{d-2} k} T_F^{1 1 \ldots 1 k j_{d-1} j_d} & = \sum_k T_F^{j_1 j_2 \ldots j_{d-2} 1 k} T_F^{1 1 \ldots 1 j_{d-1} k j_d} \\
                                                                          & = \sum_k T_F^{1 1 \ldots 1 j_{d-1} 1 k} T_F^{j_1 j_2 \ldots j_{d-2} k j_d} \\
                                                                          & = \sum_k T_F^{1 1 \ldots 1 j_{d-1} k} T_F^{j_1 j_2 \ldots j_{d-2} k j_d} \\
                                                                          & = T_F^{j_1 j_2 \ldots j_{d-1} j_d}.
\end{align*}
Repeatedly applying the above identities, we have the following equalities:
\begin{align*}
    (M_{i_1} M_{i_2} \ldots M_{i_{d-2}})_{ab} & = \sum_{k_1,\ldots,k_{d-3}} T_F^{11\ldots1 a i_1 k_1} T_F^{11\ldots1 k_1 i_2 k_2} T_F^{11 \ldots 1 k_2 i_3 k_3} \cdots T_F^{11 \ldots 1 k_{d-3} i_{d-2} b} \\
                                              & = \sum_{k_2,\ldots,k_{d-3}} \left(\sum_{k_1} T_F^{11\ldots1 a i_1 k_1} T_F^{11 \ldots 1 k_1 i_1 k_2}\right) T_F^{11 \ldots 1 k_2 i_3 k_3} \cdots T_F^{11\ldots 1 k_{d-3} i_{d-2} b} \\
                                              & = \sum_{k_3,\ldots,k_{d-3}} \left(\sum_{k_2} T_F^{11\ldots 1 a i_1 i_2 k_2} T_F^{11 \ldots 1 k_2 i_3 k_3}\right) \cdots T_F^{11\ldots 1 k_{d-3} i_{d-2} b} \\ 
                                              & = \sum_{k_{d-3}} T_F^{1 a i_1 i_2 \ldots i_{d-3} k_{d-3}} T_F^{11 \ldots 1 k_{d-3} i_{d-2} b} \\  & = T_F^{a i_1 i_2 \ldots i_{d-2} b} \\
                                              & = T_F^{i_1 i_2 \ldots i_{d-2} a b} \\
                                              & = \sum_{k} T_F^{i_1 i_2 \ldots i_{d-2} 1 k} T_F^{11 \ldots 1 a k b} \\  
                                              & = \sum_k T_F^{1 i_1 i_2 \ldots i_{d-2} k} T_F^{11 \ldots 1 a k b} \\
                                              & = \left(\sum_k T_F^{1 i_1 i_2 \ldots i_{d-2} k} M_k\right)_{ab}.
\end{align*}
Therefore, using the last equality, we find
\begin{align*}
\left(M_{i_1} M_{i_2} \cdots M_{i_{d-2}}\right)\cdot M_{i_{d-1}} & = \left(\sum_k T_F^{1 i_1 i_2 \ldots i_{d-2} k} M_k\right)\cdot M_{i_{d-1}} \\
                                   & = \sum_j \sum_k T_F^{1 i_1 i_2 \ldots i_{d-2} k} T_F^{11\ldots1 k i_{d-1} j} M_j \\
                                   & = \sum_j T_F^{i_1 i_2 \ldots i_{d-2} i_{d-1} j} M_j,
\end{align*}
where in the second line we use the identity in the {\bf Closed under composition} property. This establishes the {\bf Structure property}. \\

\noindent Let $\mathrm{M}_n(\CC)$ be the algebra of $n\times n$ complex matrices. Consider the map 
\[
\phi \colon \CC [x_1,\ldots,x_{n+1}] \to \mathrm{M}_n(\CC),
\] 
defined by $\phi(x_i) = M_i$. Because of the three properties above of the matrices $\{ M_i \}$, this is a ring map whose image is an $(n+1)$-dimensional vector space. 

Define the algebra $A := \CC[x_1,\ldots,x_{n+1}] / \ker \phi$. Then by definition and by the three properties above, $T_F$ is the $d$-way structure tensor of $A$. As $T_F$ is of minimal border rank, so is the $3$-way structure tensor of the algebra $A$. Thus $A$ is smoothable by Theorem \ref{blthm}. This finishes the proof of the first part of the proposition (the Gorenstein property will be proven as corollary of the second part). 

For the second part, assume $T$ is a $d$-way structure tensor of an $(n+1)$-dimensional smoothable algebra $A$. Let $\mathrm{u} \in A$ be the identity element in $A$. Then $T(\mathrm{u}^{\otimes d-2}) \colon A \to A$ is the identity matrix. Hence contracting $T$ with respect to the $(d-2)$ left-most factors using only tensors of rank $1$ produces a full-rank element. 

Suppose $T$ is symmetric. Then contracting $T$ with respect to the $(d-2)$ right-most factors using only tensors of rank $1$ can produce a full-rank element. Regarding $T \in (A^*)^{\otimes d-1} \otimes A$, we may find $a_3 \otimes a_4 \otimes \ldots \otimes a_{d-1} \otimes \alpha^k \in A^{\otimes d-3} \otimes A^*$ such that $T(a_3 \otimes \ldots \otimes a_{d-1} \otimes \alpha^k) \colon A \times A \to \CC$ is a perfect pairing. Define $e \colon A \to \CC$ to be $e(a) = \alpha^k(a a_3 a_4 \cdots a_{d-1})$. Then $p = T(a_3 \otimes \ldots \otimes a_{d-1} \otimes \alpha^k)$ and the above $e$ give us the required properties in Definition~\ref{gorenstein}(ii). Hence $A$ is Gorenstein.

Suppose $A$ is a Gorenstein algebra. Then, by Definition \ref{gorenstein}(ii), there exists a perfect pairing $p:A\times A\rightarrow\CC$, where $p(a , b)=e(a b)$ for some linear form $e: A\rightarrow \CC$. 

Let us fix a nonzero vector $\alpha^k\in A^{*}$. Since $T\in (A^{*})^{\otimes d-1} \otimes A$, its coordinates are functions in the dual 
vector space, i.e. $a_{i_1}\otimes \ldots \otimes a_{i_{d-1}}\otimes \alpha^k\in A^{\otimes d-1}  \otimes A^{*}$ is a function on $T$. The $a_{i_1}\otimes \ldots \otimes a_{i_{d-1}}\otimes \alpha^k$-coordinate of $T$ is $\alpha^k(a_{i_1} \cdots a_{i_{d-1}})$ because $T$ is the $d$-way  structure tensor of $A$. Recall we have a perfect pairing $p: A\times A\rightarrow \CC$ inducing an isomorphism (which we denote with the same name) $p: A\rightarrow A^{*}$. Letting $c = p^{-1}(\alpha^k)$, by definition one has $\alpha^k(a) = e(c a)$ for all $a\in A$. Now, we choose $\alpha^k$ so that $\alpha^k(a_{i_1} \cdots a_{i_{d-1}}) = e(a_k a_{i_1} \cdots a_{i_{d-1}})$ for every $k$, i.e.~$\alpha^k=p(a_k)$. This identifies $T$ as a symmetric tensor in $(A^{*})^{\otimes d}$. 
\end{proof}
\begin{theorem}\label{maincubics}
Let $V$ be an $(n+1)$-dimensional $\CC$-vector space. Let $F\in S^d V$ be a concise form of minimal border rank. Then:
\[
\mathrm{cr}(F) = n+1 \Longleftrightarrow \mathrm{Hess}(F)\neq 0  \Longleftrightarrow  \mathrm{sr}(F) = n+1.
\] 
\end{theorem}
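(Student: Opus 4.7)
The plan is to reduce the full chain of equivalences to a single new implication, $\mathrm{Hess}(F)\neq 0 \Rightarrow \mathrm{sr}(F)\leq n+1$, and then derive that implication from Proposition \ref{prop:Gor} by realizing the smoothable Gorenstein algebra $A$ it produces as a length $n+1$ smoothable subscheme of $\PP^n$ that witnesses $F$ apolarly.

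First I record the two lower bounds. Conciseness of $F$ gives $\mathrm{HF}(T/\mathrm{Ann}(F),1) = n+1$, so by Ga\l{}\k{a}zka's catalecticant bound recalled in Remark \ref{limits} one has $\mathrm{cr}(F)\geq n+1$; combined with $\mathrm{cr}(F)\leq \mathrm{sr}(F)$ and the minimal border rank hypothesis $\underline{{\bf r}}(F) = n+1 \leq \mathrm{sr}(F)$, this yields $\mathrm{cr}(F),\mathrm{sr}(F)\geq n+1$. Next, contraposition of Theorem \ref{hessvswild} immediately yields the forward directions $\mathrm{cr}(F)=n+1\Rightarrow \mathrm{Hess}(F)\neq 0$ and $\mathrm{sr}(F)=n+1\Rightarrow \mathrm{Hess}(F)\neq 0$.

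It then suffices to prove $\mathrm{Hess}(F)\neq 0\Rightarrow \mathrm{sr}(F)\leq n+1$: together with the lower bound this forces $\mathrm{sr}(F) = n+1$, and then $\mathrm{cr}(F)\leq \mathrm{sr}(F)=n+1$ together with $\mathrm{cr}(F)\geq n+1$ forces $\mathrm{cr}(F)=n+1$, closing every equivalence. To establish this remaining implication I invoke Proposition \ref{prop:Gor}: the symmetric tensor $T_F$ is the $d$-way structure tensor of a smoothable Gorenstein $\CC$-algebra $A$ with $\dim_\CC A = n+1$. In the basis $v_1,\ldots,v_{n+1}$ of $V^*$ fixed in the proof of Proposition \ref{prop:Gor} (with $v_1$ corresponding to $1\in A$, so that $M_1$ is the identity), I embed $\Spec(A)$ as a subscheme $R\subset \PP(V)=\PP^n$ using the algebra generators $v_2,\ldots,v_{n+1}$, and verify that the saturated homogeneous ideal $\mathcal J_R\subset T$ of this embedded scheme satisfies $\mathcal J_R\subset \mathrm{Ann}(F)$. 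By apolarity, this means $F\in \langle \nu_d(R)\rangle$, and since $A$ is smoothable so is $R$, which gives $\mathrm{sr}(F)\leq n+1$ as required.

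The technical heart of the argument, and the main obstacle, is the containment $\mathcal J_R\subset \mathrm{Ann}(F)$. To verify it I translate the defining identities of $A$ furnished by the proof of Proposition \ref{prop:Gor} (commutativity $M_iM_j=M_jM_i$, closure under composition $M_iM_j = \sum_k T_F^{1\cdots 1ijk}M_k$, and the structure property) together with the Gorenstein pairing $p(a,b)=e(ab)$ into apolarity relations $h\circ F = 0$ for $h$ running over a generating set of $\mathcal J_R$; the symmetry of $T_F$ in all $d$ factors, once the last one is moved across the Gorenstein pairing, is what makes these two families of relations match. The smoothability of $R$ inside $\PP^n$ is inherited through a smoothing family $A_t$ of $A$ with $A_t\cong\CC^{n+1}$ for $t\neq 0$: under the chosen embedding, the family of $n+1$ reduced points corresponding to the idempotents of the $A_t$'s specializes flatly to $R$ at $t=0$.
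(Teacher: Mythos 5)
Your reduction to the single implication $\mathrm{Hess}(F)\neq 0\Rightarrow \mathrm{sr}(F)\leq n+1$ is exactly the paper's, and so is the overall strategy: invoke Proposition~\ref{prop:Gor} to realize $T_F$ as the $d$-way structure tensor of a smoothable Gorenstein algebra $A$ of dimension $n+1$, embed $\Spec(A)$ in $\PP^n$ via the generators, and conclude by apolarity plus smoothability of $A$. Your handling of the lower bounds, the contrapositives of Theorem~\ref{hessvswild}, and the transfer of smoothability to the embedded scheme are all as in the paper.

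The one place where you diverge, and which you rightly flag as the technical heart, is the verification of $\mathcal J_R\subset\mathrm{Ann}(F)$ (equivalently $T_F\in\langle\nu_d(\Spec A)\rangle$). You propose to enumerate generators of $\mathcal J_R$ and convert the $M_i$-identities together with the pairing into relations $h\circ F=0$; this can be made to work, but it re-introduces the structural matrices and is noticeably heavier than necessary. The paper's verification is a one-line pass through the Gorenstein functional: in the affine chart of $\PP(S^dA^*)$, a linear form $h=\sum\lambda_{i_1\cdots i_d}y_{i_1\cdots i_d}$ vanishes on $\nu_d(\Spec A)$ exactly when $\sum\lambda_{i_1\cdots i_d}\,a_{i_1}\cdots a_{i_d}=0$ in $A$, and applying the linear form $e$ of the Gorenstein pairing to this relation gives $h(T_F)=\sum\lambda_{i_1\cdots i_d}\,e(a_{i_1}\cdots a_{i_d})=0$, since the entries of $T_F$ are precisely $e(a_{i_1}\cdots a_{i_d})$. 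So the Gorenstein functional $e$ is the only thing you need, not the full package of $M_i$-identities; those were consumed already in the proof of Proposition~\ref{prop:Gor} and do not need to be re-invoked here.
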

\begin{proof}
By Theorem \ref{hessvswild}, if $\mathrm{cr}(F)=n+1$ then $\mathrm{Hess}(F)\neq 0$. We now show: 
\[
\mathrm{Hess}(F)\neq 0 \Longrightarrow \mathrm{sr}(F)=n+1. 
\]
Note that this is enough to prove the statement, as $\mathrm{sr}(F) = n+1$ implies $\mathrm{cr}(F) = n+1$.

Suppose $\mathrm{Hess}(F)\neq 0$. By Proposition~\ref{prop:Gor}, the symmetric tensor $T_F$ corresponding  to $F$ is the $d$-way structure tensor of a smoothable Gorenstein algebra $A$. Let $p(a,b)=e(ab)$ be the perfect pairing on $A$, inducing an isomorphism $p:A\rightarrow A^*$. As in the proof of Proposition \ref{prop:Gor}, we may realize $T_F$ as a symmetric tensor in $(A^*)^{\otimes d}$. We fix a linear basis $a_1=1, a_2,\dots, a_{n+1}$ of $A$. The $(i_1,\ldots,i_d)$-th entry of the tensor $T_F$ equals $e(a_{i_1}\ldots a_{i_d})$.

We embed the affine scheme $\Spec(A)$ in $\CC^n$ via the surjective ring map: 
\[
\phi_A: \CC[x_2,\dots,x_{n+1}]\rightarrow A,
\]
where $x_i\mapsto a_i$. Next, we embed $\CC^n$ in $\PP^n=\PP(A^*)$ and apply the $d$-th Veronese embedding to $\PP(S^d A^*)$.  Let $L\subset \PP(S^d A^*)$ be the affine subspace given by the complement of the zero locus of the function $1\cdot 1 \cdots 1\in S^d A$. Write $L = \Spec (\CC[y_{i_1 i_2 \ldots i_d}])$, where $\{i_1,i_2,\ldots,i_d\}$ is a cardinality $d$ multisubset of $\{1,\dots,n+1\}$ distinct from $\{1,1,\ldots,1\}$. The embedding $\nu_d(\Spec(A))$ in $\nu_d(\CC^n)\subset L\subset \PP(S^d A^*)$ is defined by the ring map $y_{i_1 i_2 \ldots i_d}\mapsto a_{i_1}a_{i_2}\ldots a_{i_d}\in A$. 

We claim that $T_F$ belongs to the linear span of $\nu_d(\Spec(A))$. To see this, consider any linear form $h =\sum \lambda_{i_1 i_2 \ldots i_d} y_{i_1 i_2 \ldots i_d}$ vanishing on $\nu_d(\Spec (A))$. This means that $\sum \lambda_{i_1 i_2 \ldots i_d} a_{i_1} a_{i_2} \ldots a_{i_d}=0\in A$. Applying the linear form $e$ we obtain: \[0= e(0)= e\left(\sum \lambda_{i_1 i_2 \ldots i_d} a_{i_1} a_{i_2} \ldots a_{i_d}\right)=\sum \lambda_{i_1 i_2 \ldots i_d} e(a_{i_1} a_{i_2} \ldots   a_{i_d})= h(T_F).\]
Hence the linear form $h$ vanishes on $T_F$. This shows that $T_F$ belongs to the linear span of $\nu_d(\Spec(A))$, which implies $\mathrm{sr}(F) = n+1$, thus finishing the proof.\end{proof}

\section{The limiting scheme}\label{seclimsch}

We start with  the definition of limiting scheme: 

\begin{definition}\label{borderdec}
Let $F\in S^d V^{*}$ be a form. Suppose we are given a border rank decomposition for $F$, i.e. 
\begin{equation}\label{brdec}
F = \lim_{t\rightarrow 0}\frac{1}{t^s}\left(L_1(t)^d + \ldots  +L_{\underline{{\bf r}}(F)}(t)^d\right), \ s\geq 0, 
\end{equation}
where $L_i(t)$ are linear forms. The reduced zero-dimensional scheme whose (closed) points are the $L_j(t)$ is denoted $R(t)$. 
 For each $t\neq 0$, the radical ideal defining $R(t)$ is denoted $\mathcal I_{R(t)}$. The flat limit $Z = \lim_{t\rightarrow 0} R(t)$ is called the {\it limiting scheme} of \eqref{brdec}. Note that $Z$ and $R(t)$ ($t\neq 0$) have the same Hilbert polynomial; see e.g. \cite[Theorem III.9.9]{h}. 
\end{definition}

We have the following corollary from the proof of \cite[Proposition 2.6]{bb15}:

\begin{corollary} \label{tangentcone1}
Keep the assumptions from Proposition~\ref{tangentcones}. Assume 
\[
F \in \langle \PP \widehat{T}_{z_1}, \ldots, \PP \widehat{T}_{z_r}\rangle\subset \sigma_r(X).  
\]
Then we can find a border rank decomposition for $F$ whose limiting scheme is the smooth scheme supported at the $r$ points $\{z_1,\ldots,z_r\}$.
\end{corollary}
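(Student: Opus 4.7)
The plan is to lift the explicit construction underlying \cite[Proposition 2.6]{bb15}. I would first write $z_i=[L_i^d]$ for $L_i\in V^*$, so that the affine tangent cone at $z_i$ equals $\widehat{T}_{z_i}=L_i^{d-1}\cdot V^*$. The hypothesis $F\in\langle\PP\widehat{T}_{z_1},\ldots,\PP\widehat{T}_{z_r}\rangle$ then translates into a presentation $F=\sum_{i=1}^r L_i^{d-1}N_i$ for suitable linear forms $N_i\in V^*$.

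Next I would use the dimension hypothesis $\dim\langle z_1,\ldots,z_r\rangle<r-1$, which forces $L_1^d,\ldots,L_r^d$ to be linearly dependent, to pick a relation $\sum_i a_i L_i^d=0$ with every $a_i\neq 0$. Fixing $d$-th roots $\alpha_i:=a_i^{1/d}$, I would then define
\[
L_i(t) \;:=\; \alpha_i L_i \,+\, t\,\alpha_i^{1-d}N_i, \qquad i=1,\ldots,r.
\]
A direct binomial expansion yields $L_i(t)^d=a_iL_i^d+dt\,L_i^{d-1}N_i+O(t^2)$, so summing gives $\sum_{i=1}^r L_i(t)^d=dt\,F+O(t^2)$, whence
\[
F \;=\; \lim_{t\to 0}\,\frac{1}{dt}\sum_{i=1}^r L_i(t)^d
\]
is a border-rank decomposition in the sense of Definition~\ref{borderdec}, with exactly $r$ terms.

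To identify the limiting scheme, I would observe that $L_i(t)\to\alpha_iL_i\neq 0$, so $[L_i(t)^d]\to[\alpha_i^d L_i^d]=[L_i^d]=z_i$ projectively. Since the $z_i$ are distinct, the reduced scheme $R(t)=\{[L_i(t)^d]:i=1,\ldots,r\}$ has $r$ distinct closed points for all small $t\neq 0$, and these limits remain distinct at $t=0$; hence the flat family stays reduced of length $r$ through $t=0$, and $\lim_{t\to 0}R(t)$ is the smooth (reduced) scheme supported on $\{z_1,\ldots,z_r\}$.

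The main obstacle is the second step: choosing the dependency with all $a_i\neq 0$. When the kernel of the evaluation map $\CC^r\to\langle L_1^d,\ldots,L_r^d\rangle$ has dimension at least $2$ this is immediate, since the union of coordinate hyperplanes is a proper subvariety of the kernel. When the kernel is one-dimensional, a vanishing coordinate $a_{i_0}=0$ would force $L_{i_0}^d$ to be linearly independent of the remaining powers while the others already satisfy a smaller dependency; handling this boundary case cleanly is the subtle part, and one either invokes the full construction from \cite{bb15} or passes to a small perturbation of the $N_i$'s to reduce to the generic situation above.
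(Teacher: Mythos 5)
Your construction is the explicit Veronese version of what the paper does: lift the $z_i$ to affine representatives $\hat z_i$ with $\sum_i\hat z_i=0$, run a curve through each lifted point with prescribed tangent vector $v_{z_i}$, and extract $F$ as $\lim_{t\rightarrow 0}\frac1t\sum_i\hat z_i(t)$. Your formula $L_i(t)=\alpha_iL_i+t\,\alpha_i^{1-d}N_i$ is one concrete choice of such a curve, and the identification of the limiting scheme as the reduced scheme at $\{z_1,\ldots,z_r\}$ is the same elementary continuity argument in both proofs.

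The obstacle you flag at the end is genuine, and it is also not addressed in the paper's own proof: the line ``let $\hat z_i\in\CC^{N_d+1}$ with $[\hat z_i]=z_i$ and $\sum_i\hat z_i=0$'' silently presupposes a linear relation $\sum a_iL_i^d=0$ with every $a_i\neq 0$, which is exactly what you worry about. Under the hypotheses as stated this can fail for $r\geq 5$: take $z_1,\ldots,z_4$ four distinct coplanar points of a Veronese variety and $z_5$ in general position, so $\dim\langle z_1,\ldots,z_5\rangle=3<4$ yet the unique syzygy has $a_5=0$. The corollary should therefore carry the implicit extra hypothesis that some syzygy among the $L_i^d$ has full support. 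This is easy to verify in the two places the paper actually invokes the corollary: in Proposition~\ref{structureidealVSPGd} the $z_i$ are $d+2$ distinct points on a rational normal curve of degree $d$, any $d+1$ of which are linearly independent, so the one-dimensional kernel has no zero coordinate; and in the proof of Theorem~\ref{vspreducible} the chained elimination carried out in Proposition~\ref{borderankFn} exhibits such a relation directly. Your suggested alternative fix of perturbing the $N_i$ does not help: the obstruction lives entirely in the base points $L_i^d$, not in the tangent directions $N_i$, so perturbing the latter cannot supply the missing relation.
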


\begin{proof}
Assume $\DS F = \sum_{i=1}^{r} v_{z_i}$ where $[v_{z_i}] \in \PP \widehat{T}_{z_i}$. Let $\hat{z}_i \in \CC^{N_d+1}$ such that $[\hat{z}_i] = z_i$ and $\DS \sum_i \hat{z}_i = 0$. We can find curves $\hat{z}_1 (t),\ldots,\hat{z}_r (t)$ in the affine cone $\widehat{X}$ over $X$ such that $\hat{z}_i (0) = \hat{z}_i$ and $\frac{\mathrm{d} \hat{z}_i}{\mathrm{d} t}(0) = v_{z_i}$. Then we have the following border rank decomposition for $F$:
\[
F = \lim_{t \rightarrow 0} \frac{1}{t} \sum_{i=1}^r \hat{z}_i (t).
\]
Since $\{z_1,\ldots,z_r\}$ are $r$ distinct points on $X$, the limiting scheme corresponding to the border rank decomposition above is the smooth scheme supported at the $r$ points $\{z_1,\ldots,z_r\}$.
\end{proof}

The next result is a consequence of Buczy\'nska-Buczy\'nski's theory: 

\begin{theorem}\label{satidealsvsp}
The saturation of any ideal in $\underline{\mathrm{VSP}}(F, \underline{{\bf r}}(F))$ coincides with the ideal of a limiting scheme of a border
rank decomposition.

\begin{proof}
Any ideal $\mathcal J\in \underline{\mathrm{VSP}}(F,\underline{{\bf r}}(F))$ comes from some border rank decomposition \eqref{brdec}; see the proof of \cite[Theorem 3.15]{bb19}. Such a border rank decomposition \eqref{brdec} determines a family of zero-dimensional schemes $R(t)$, each of length $\underline{{\bf r}}(F)$, such that their ideals $\mathcal I_{R(t)}$ have 
the generic Hilbert function and $\mathcal J =  \lim_{t \rightarrow 0} \mathcal{I}_{R(t)}$. 

By definition of limits, we have
\[
\mathcal J = \lim_{t \rightarrow 0} \mathcal{I}_{R(t)} \subset \mathcal{I}_{\lim_{t \rightarrow 0} R(t)}. 
\] 

Let $Z = \lim_{t\rightarrow 0} R(t)$ be the limiting scheme of the given border rank decomposition \eqref{brdec}. Since $Z$ has length $\underline{{\bf r}}(F)$, its ideal has the same degree (or Hilbert polynomial) as $\mathcal J$. Since $\mathcal J\subset \mathcal{I}_Z$, their saturations coincide. Since $\mathcal{I}_Z$ is saturated by definition, $\mathcal J^{sat} = \mathcal{I}_Z$. 
\end{proof}
\end{theorem}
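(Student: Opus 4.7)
The plan is to unpack the definition of $\underline{\mathrm{VSP}}(F,\underline{{\bf r}}(F))$ and realize each ideal inside as coming from an actual border rank decomposition, so that one can compare two a priori different notions of flat limit: the one in the multigraded Hilbert scheme $\mathrm{Hilb}^{h_r,\PP^n}_T$ (which preserves the generic Hilbert function) and the scheme-theoretic one in $\PP^n$ (which is saturated but need not have the generic Hilbert function). I would start by taking $\mathcal{J}\in \underline{\mathrm{VSP}}(F,\underline{{\bf r}}(F))\subseteq \mathrm{Slip}_{r,\PP^n}$ where $r=\underline{{\bf r}}(F)$. By the defining property of $\mathrm{Slip}_{r,\PP^n}$, the ideal $\mathcal{J}$ is a flat limit of ideals $\mathcal{I}_{R(t)}$ (for $t\neq 0$) of reduced $r$-tuples of points with generic Hilbert function. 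The first nontrivial step, which I would attribute to the Buczy\'nska--Buczy\'nski theory of border apolarity, is that the condition $\mathcal{J}\subseteq \mathrm{Ann}(F)$ forces the linear forms underlying $R(t)$ to give rise to a border rank decomposition of $F$ in the sense of Definition~\ref{borderdec}.

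Once this is in place, let $Z=\lim_{t\to 0} R(t)$ be the scheme-theoretic flat limit in $\PP^n$; by construction this is the limiting scheme of the border rank decomposition and $\mathcal{I}_Z$ is saturated with $\mathrm{length}(Z)=r$. The next step is the inclusion $\mathcal{J}\subseteq \mathcal{I}_Z$. For this I would argue degree by degree: if $f\in \mathcal{J}_d$, then there exist $f_t\in (\mathcal{I}_{R(t)})_d$ with $f=\lim_{t\to 0}f_t$; since $f_t$ vanishes on $R(t)$ for $t\neq 0$, the flatness of the family ensures that $f$ vanishes on the flat limit $Z$, and hence $f\in (\mathcal{I}_Z)_d$.

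The conclusion $\mathcal{J}^{sat}=\mathcal{I}_Z$ then follows from a simple length comparison: saturation does not change the Hilbert polynomial, so $\mathcal{J}^{sat}$ defines a zero-dimensional scheme of length $r$ in $\PP^n$; since $\mathcal{I}_Z$ is saturated, $\mathcal{J}^{sat}\subseteq \mathcal{I}_Z$, and because both saturated ideals cut out zero-dimensional subschemes of the same length $r$, they must agree. The main obstacle in the argument, in my view, is the first step: rigorously producing a border rank decomposition of $F$ from an arbitrary $\mathcal{J}\in\underline{\mathrm{VSP}}(F,\underline{{\bf r}}(F))$. This is the content of the result of Buczy\'nska--Buczy\'nski \cite[Theorem 3.15]{bb19} that I would invoke as a black box; the remainder of the proof is essentially an exercise in comparing two different kinds of flat limits.
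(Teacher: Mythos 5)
Your proposal is correct and follows essentially the same route as the paper: invoke Buczy\'nska--Buczy\'nski's Theorem~3.15 to realize $\mathcal J$ as the limit of ideals $\mathcal I_{R(t)}$ of a border rank decomposition, observe $\mathcal J\subseteq \mathcal I_Z$ for the limiting scheme $Z$, and conclude $\mathcal J^{sat}=\mathcal I_Z$ by comparing Hilbert polynomials of two nested saturated ideals. You simply spell out the inclusion $\mathcal J\subseteq\mathcal I_Z$ degree by degree, where the paper abbreviates this as ``by definition of limits''; the substance is identical.
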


\begin{theorem}\label{correspondence}
Suppose $F\in S^d V^{*}$ is concise and of minimal border rank $n+1$. Then every border rank decomposition of $F$ determines an ideal in $\underline{\mathrm{VSP}}(F, n+1)$.
\begin{proof}
Given any border rank decomposition $F = \lim_{t\rightarrow 0} H(t) $, where each $H(t) = \frac{1}{t^s}\left(L_1(t)^d + \ldots  +L_{n+1}(t)^d\right)$ has rank $n+1$,
one has that $\mathcal I_{R(t)}\subset \mathrm{Ann}(H(t))$ ($t\neq 0$) by the classical Apolarity lemma \cite[Lemma 1.15]{ik}. Since $F$ is concise, we can find $t \neq 0$ such that $H(t)$ is concise. In this case, $R(t)$ consists of $n+1$ linearly independent points. Therefore $\mathcal I_{R(t)}$ has the generic Hilbert function of $n+1$ points in $\PP^n$. Hence $\mathcal J:= \lim_{t\rightarrow 0} \mathcal I_{R(t)}\in \underline{\mathrm{VSP}}(F, n+1)$. 
\end{proof}
\end{theorem}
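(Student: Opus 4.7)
The plan is to extract the ideal $\mathcal J$ directly from the given border rank decomposition by taking the flat limit of the ideals of the reduced point schemes $R(t)\subset \PP^n$ whose points are $[L_1(t)],\ldots,[L_{n+1}(t)]$, and then to verify the two conditions defining $\underline{\mathrm{VSP}}(F,n+1)$: membership in $\mathrm{Slip}_{n+1,\PP^n}$ and containment in $\mathrm{Ann}(F)$. Writing $H(t)=t^{-s}\bigl(L_1(t)^d+\cdots+L_{n+1}(t)^d\bigr)$ so that $F=\lim_{t\to 0}H(t)$, and letting $\mathcal I_{R(t)}\subset T$ denote the saturated radical ideal of $R(t)$ for $t\neq 0$, the natural candidate is $\mathcal J := \lim_{t\to 0}\mathcal I_{R(t)}$ in the appropriate multigraded Hilbert scheme.

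First I would show $\mathcal J \in \mathrm{Slip}_{n+1,\PP^n}$, which by the definition given in the preliminaries reduces to exhibiting a value $t\neq 0$ for which $\mathcal I_{R(t)}$ is a radical ideal of $n+1$ distinct points with the generic Hilbert function $h_{n+1}$. Conciseness is an open condition on forms (the non-concise locus is cut out by linear degeneracy of the first catalecticant), so since $F$ is concise, $H(t)$ must be concise for all $t$ in a punctured neighborhood of $0$. Conciseness of $H(t)$ forces the $n+1$ linear forms $L_i(t)$ to be linearly independent, so the points $[L_i(t)]\in\PP^n$ are in linearly general position; consequently $\mathcal I_{R(t)}$ is the ideal of $n+1$ linearly independent reduced points and has the generic Hilbert function $h_{n+1}$, putting $\mathcal J$ into $\mathrm{Slip}_{n+1,\PP^n}$ by definition.

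For the inclusion $\mathcal J\subset \mathrm{Ann}(F)$, I would invoke the classical Apolarity Lemma \cite[Lemma 1.15]{ik}, which gives $\mathcal I_{R(t)}\subset \mathrm{Ann}(H(t))$ for each $t\neq 0$ where $H(t)$ is defined. In each fixed degree $k$, the condition that a given subspace of $T_k$ annihilates a form is a closed condition on the pair (subspace, form) in $\mathrm{Gr}(\dim\mathcal J_k,T_k)\times S^kV^*$, so taking the limit $t\to 0$ yields $\mathcal J_k\subset \mathrm{Ann}(F)_k$ degree by degree. This gives the desired containment and completes the verification that $\mathcal J\in\underline{\mathrm{VSP}}(F,n+1)$. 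The main subtlety I expect is ensuring that conciseness of $F$ really does transfer to conciseness of $H(t)$ for some $t\neq 0$: a priori the $L_i(t)$ are allowed to be Laurent in $t$, but this is precisely where the minimal border rank hypothesis $\underline{\mathbf{r}}(F)=n+1=\dim V$ plays its role, as it prevents any summand from being redundant and forces the first catalecticant of $H(t)$ to have full rank on an open set in $t$.
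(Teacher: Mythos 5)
Your proof is correct and follows essentially the same route as the paper: take $\mathcal J = \lim_{t\to 0}\mathcal I_{R(t)}$, use openness of conciseness to get $H(t)$ concise for small $t\neq 0$, conclude the $n+1$ points of $R(t)$ are linearly independent so $\mathcal I_{R(t)}$ has the generic Hilbert function (hence $\mathcal J\in\mathrm{Slip}_{n+1,\PP^n}$), and use the Apolarity Lemma plus a limiting argument for $\mathcal J\subset\mathrm{Ann}(F)$. Your degree-by-degree closedness argument for the containment, and your observation that conciseness is an open condition, simply make explicit what the paper states tersely.
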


\section{Wild forms of higher degree and their $\underline{\mathrm{VSP}}$}\label{higherdeg}

Let $d\geq 3$ and define the following infinite series of concise forms of degree $d$:
\[
G_d = \sum_{i=0}^{d-1} v_i u_0^i u_1^{d-1-i}. 
\]

For $d=3$, this coincides up to change of variables with the wild cubic form found in \cite[\S 4]{bb15}. This infinite series has
a classical geometric significance: they are the equations of the dual hypersurfaces of the rational ruled surfaces $\PP(\mathcal O_{\PP^1}(1)\oplus \mathcal O_{\PP^1}(d-1))$ embedded with the tautological bundle in $\PP^{d+1}$; see \cite[Theorem 3.1]{GRS20} and \cite[Chapter 3, Example 3.6]{GKZ}. (In the suggestive classical terminology, these rational surfaces are called {\it rational normal scrolls with line directrix}.)

Note that, for every $d\geq 3$, the partial derivatives of $G_d$ with respect to the variables $v_i$ are algebraically dependent: their relations coincide with the equations of the usual Veronese embedding of degree $d-1$ of $\PP^1$ in $\PP^{d-1}$. Thus $\mathrm{Hess}(G_d) = 0$ for $d\geq 3$.
Let $\mathrm{Ann}(G_d)\subset T = \CC[x_0,\ldots, x_{d-1}, y_0, y_1]$ be its annihilator, where $x_i$ is dual to $v_i$ and $y_j$ is dual to $u_j$.

\begin{lemma}\label{borderankGd}
For every $d\geq 3$, the form $G_d$ has minimal border rank $d+2$. 
\begin{proof}
Let $\ell_0^d,\ldots, \ell_{d}^d$ be $d+1$ pairwise distinct linear forms in $u_0, u_1$. They may 
be viewed as $d+1$ distinct points on the degree $d$ rational normal curve $\nu_d(\PP^1)\subset \PP^d$. 
They are linearly independent. (This is well-known and can be explicitly checked, for instance, by calculating the corresponding Wronskian matrix at the origin and show it is full rank.)
Any other form $\ell_{d+1}^d$ is linearly dependent to those above, because $\dim S^d\langle u_0, u_1\rangle = d+1$. 

Up to change of bases one has $G_d = \sum_{i=0}^{d-1}v_i\ell_{i}^{d-1}$. Let $\PP\widehat{T}_{\ell_i^{d}}$ denote the affine cone of the Zariski
tangent space to $\nu_d(\PP^1)$ at $\ell_i^{d}$. Therefore 
\[
G_d\in \left\langle \PP\widehat{T}_{\ell_i^{d}}\ , 0\leq i\leq d-1\right\rangle. 
\]
Since $\ell_0^d,\ldots, \ell_{d}^d, \ell_{d+1}^d$ are linearly dependent, Proposition \ref{tangentcones} yields the inequality $\underline{{\bf r}}(G_d)\leq d+2$.
Since $G_d$ is concise, $\underline{{\bf r}}(G_d)\geq d+2$. Thus equality holds. 
\end{proof}
\end{lemma}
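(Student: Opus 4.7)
The plan is to establish $\underline{{\bf r}}(G_d) = d+2$ by proving the two inequalities separately. For the lower bound, every variable $v_0,\ldots,v_{d-1},u_0,u_1$ occurs in some monomial of $G_d$, so $G_d$ is concise in a vector space $V^*$ of dimension $d+2$, and a concise form has border rank at least $\dim V = d+2$.

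For the upper bound I would invoke Proposition~\ref{tangentcones}. Choose $d+2$ pairwise distinct linear forms $\ell_0,\ldots,\ell_{d+1}$ in the two variables $u_0,u_1$ only, and consider the corresponding points $[\ell_i^d] \in \nu_d(\PP(V))$. They all lie in the linear subspace $\PP(S^d\langle u_0,u_1\rangle) \subset \PP(S^d V^*)$ of projective dimension $d$, and hence their span has dimension at most $d < (d+2)-1$. Proposition~\ref{tangentcones} then places the join of the affine tangent cones $\PP\widehat{T}_{[\ell_i^d]}$ inside $\sigma_{d+2}(\nu_d(\PP(V)))$.

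It remains to show $G_d$ belongs to this join. The affine cone of the tangent space to $\nu_d(\PP(V))$ at $[\ell^d]$ equals $\ell^{d-1} \cdot V^*$. Since any $d$ distinct powers $\ell_0^{d-1},\ldots,\ell_{d-1}^{d-1}$ form a basis of the $d$-dimensional space $S^{d-1}\langle u_0,u_1\rangle$ (they are $d$ points on the $(d-1)$-th rational normal curve in $\PP^{d-1}$, hence linearly independent), one can re-express each monomial $u_0^i u_1^{d-1-i}$ as a linear combination of the $\ell_j^{d-1}$. Absorbing the invertible change of coefficients into a linear substitution among the $v_i$'s, the form $G_d$ rewrites as $\sum_{i=0}^{d-1} w_i\, \ell_i^{d-1}$ for new coordinates $w_i$ on the $v$-subspace of $V^*$. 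Each summand $w_i \ell_i^{d-1}$ lies in $\PP\widehat{T}_{[\ell_i^d]}$, so $G_d$ is contained in the join of the tangent cones at $[\ell_0^d],\ldots,[\ell_{d-1}^d]$, and a fortiori at all $d+2$ chosen points. Combined with the lower bound, this yields $\underline{{\bf r}}(G_d) = d+2$.

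The key observation is that the partials $\partial G_d/\partial v_i = u_0^i u_1^{d-1-i}$ already span $S^{d-1}\langle u_0,u_1\rangle$, which reduces the entire border rank computation onto the degree-$d$ rational normal curve in $\PP^d$, where the dimension inequality required by Proposition~\ref{tangentcones} is automatic. No serious obstacle remains beyond spotting this reduction; one could equivalently record an explicit border rank decomposition by choosing a linear relation $\sum_{i=0}^{d+1} c_i \ell_i^d = 0$ among $d+2$ points of the curve and differentiating along directions $m_i \in V^*$ tuned so that $\sum_i d c_i m_i \ell_i^{d-1} = G_d$.
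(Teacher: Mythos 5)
Your proof is correct and follows essentially the same route as the paper: lower bound by conciseness, upper bound via Proposition~\ref{tangentcones} applied to $d+2$ points of the form $[\ell_i^d]$ on the rational normal curve $\nu_d(\PP^1)$, together with the observation that $G_d$ can be rewritten as $\sum_{i=0}^{d-1} w_i\,\ell_i^{d-1}$ after an invertible substitution on the $v_i$, placing it in the span of the affine tangent cones. You spell out a couple of steps the paper elides (that $\widehat{T}_{[\ell^d]} = \ell^{d-1}\cdot V^*$ and the explicit change of coefficients on the $v$-variables), but the argument is identical in substance.
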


\begin{corollary}\label{infiniteseriesdeg}
The forms $G_d$ are wild. 
\begin{proof}
By Lemma \ref{borderankGd}, $G_d$ has minimal 
border rank. Moreover, as noticed above, $\mathrm{Hess}(G_d) = 0$. Theorem \ref{hessvswild} shows that the degree $d$ forms $G_d$ are wild. 
\end{proof}
\end{corollary}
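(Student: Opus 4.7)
The statement is essentially a direct application of Theorem \ref{hessvswild} once the three hypotheses (conciseness, minimal border rank, and vanishing Hessian) are verified for $G_d$. My plan is therefore to check each of these in turn.

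First I would note that $G_d = \sum_{i=0}^{d-1} v_i u_0^i u_1^{d-1-i}$ is concise: no nonzero linear form in $x_0,\ldots,x_{d-1},y_0,y_1$ annihilates $G_d$, since the $v_i$ appear with linearly independent degree $(d-1)$ coefficients $u_0^i u_1^{d-1-i}$ in the variables $u_0,u_1$, and the partials $\partial G_d/\partial u_0$, $\partial G_d/\partial u_1$ are nonzero and independent of each other as well.

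Second, the minimal border rank statement $\underline{{\bf r}}(G_d) = d+2 = \dim V$ is exactly Lemma \ref{borderankGd}, so I may quote it directly.

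Third, I would observe $\mathrm{Hess}(G_d) = 0$ as follows. The partial derivatives $\partial G_d/\partial v_i = u_0^i u_1^{d-1-i}$ for $i=0,\ldots,d-1$ are precisely (a basis for the image of) the degree $(d-1)$ Veronese embedding of $\PP^1$, so they satisfy the $2\times 2$ minor relations $(\partial G_d/\partial v_i)(\partial G_d/\partial v_j) = (\partial G_d/\partial v_{i'})(\partial G_d/\partial v_{j'})$ whenever $i+j = i'+j'$. Thus the $d$ partials $\partial G_d/\partial v_i$ are algebraically dependent, and so the full collection $\partial G_d/\partial v_0,\ldots,\partial G_d/\partial v_{d-1},\partial G_d/\partial u_0,\partial G_d/\partial u_1$, consisting of $d+2$ forms in the $d+2$ variables, is algebraically dependent as well. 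By the criterion recalled in \cite[\S 7.2]{russo}, this is equivalent to $\mathrm{Hess}(G_d) = 0$.

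Finally, having established that $G_d$ is concise, of minimal border rank $d+2$, and with vanishing Hessian, Theorem \ref{hessvswild} immediately yields that $\mathrm{sr}(G_d) > \underline{{\bf r}}(G_d)$, i.e.\ $G_d$ is wild. There is no significant obstacle here; all the work has already been done in Lemma \ref{borderankGd} and Theorem \ref{hessvswild}, and the only elementary verification is the algebraic dependence of the $v_i$-derivatives, which is transparent from the Veronese interpretation.
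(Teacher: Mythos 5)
Your proof is correct and follows essentially the same route as the paper: invoke Lemma \ref{borderankGd} for minimal border rank, observe $\mathrm{Hess}(G_d)=0$ via the Veronese-type algebraic relations among the $v_i$-derivatives, and apply Theorem \ref{hessvswild}. The only addition is your explicit check of conciseness, which the paper asserts without elaboration when introducing $G_d$.
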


\begin{remark}
For every $d$, one has 
\[
d+2 = \underline{{\bf r}}(G_d) < \mathrm{cr}(G_d)\leq \mathrm{sr}(G_d)\leq 2d,
\] 
as every $2$-jet is smoothable and so is their union. 
\end{remark}

This corollary complements \cite[Theorem 1.3]{bb15}, as follows: 

\begin{theorem}
For every $d\geq 3$, there exist wild forms of degree $d$. 
\end{theorem}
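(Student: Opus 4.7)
The plan is to use the explicit family $G_d = \sum_{i=0}^{d-1} v_i u_0^i u_1^{d-1-i}$ as a universal witness for every $d\geq 3$; the statement is then exactly the content of Corollary \ref{infiniteseriesdeg}, so the task reduces to verifying the three hypotheses of Theorem \ref{hessvswild} for $G_d$, namely conciseness, minimal border rank, and vanishing Hessian.

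First I would record conciseness: the partials $\partial G_d/\partial v_i = u_0^i u_1^{d-1-i}$ for $0\leq i\leq d-1$ are linearly independent monomials, and the partials $\partial G_d/\partial u_j$ involve all of $v_0,\ldots,v_{d-1}$, so no nonzero linear form annihilates $G_d$. Next, the vanishing of $\mathrm{Hess}(G_d)$ is essentially built in by design: the $d$ forms $u_0^i u_1^{d-1-i}$ parametrize the rational normal curve $\nu_{d-1}(\PP^1)\subset \PP^{d-1}$, so they satisfy the classical $2\times 2$ minor relations, which exhibit algebraic dependence among all $d+2$ first partials of $G_d$ and hence force the Hessian determinant to be identically zero.

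The key step is the border rank computation, which is Lemma \ref{borderankGd}. I would carry it out exactly along the tangent-cone lines of Proposition \ref{tangentcones}: pick $d+1$ pairwise distinct points $[\ell_0^d],\ldots,[\ell_d^d]$ on the rational normal curve $\nu_d(\PP^1)\subset \PP^d$, and observe that any further point $[\ell_{d+1}^d]$ is linearly dependent on these, since $\dim S^d\langle u_0,u_1\rangle = d+1$. After an invertible change of basis on the $v_i$, the form $G_d$ becomes $\sum_{i=0}^{d-1} v_i \ell_i^{d-1}$, which lies in the span of the affine cones of the Zariski tangent spaces at $[\ell_0^d],\ldots,[\ell_{d-1}^d]$. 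Applying Proposition \ref{tangentcones} to the configuration of $d+2$ dependent points $[\ell_0^d],\ldots,[\ell_{d+1}^d]$ (with the last two tangent directions contributing nothing beyond the dependence) yields $\underline{{\bf r}}(G_d)\leq d+2$, and conciseness gives the matching lower bound $\underline{{\bf r}}(G_d)\geq \dim V = d+2$.

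With minimal border rank and vanishing Hessian established, Theorem \ref{hessvswild} immediately concludes that $G_d$ is wild, proving the existence statement for every $d\geq 3$. The main obstacle in the plan is the border rank upper bound: one has to identify the right configuration of points on $\nu_d(\PP^1)$ so that Proposition \ref{tangentcones} places $G_d$ exactly on the boundary of the $(d+2)$-nd secant variety, and this requires the combinatorial observation about dependence of $d+2$ degree-$d$ forms on $\PP^1$ together with the change-of-basis that realigns the $v_i$-coefficients with the chosen tangent directions.
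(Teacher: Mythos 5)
Your proposal is correct and follows the paper's own route essentially verbatim: use the forms $G_d$ as witnesses, establish minimal border rank via the tangent-cone criterion of Proposition~\ref{tangentcones} (this is Lemma~\ref{borderankGd}), observe that $\mathrm{Hess}(G_d)=0$ because the $v_i$-partials are algebraically dependent, and invoke Theorem~\ref{hessvswild}; this is exactly the content of Corollary~\ref{infiniteseriesdeg}. The only addition you make is an explicit verification of conciseness, which the paper leaves implicit.
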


\begin{proposition}\label{structureidealVSPGd}
Let $\mathcal J\subset T$ be an ideal. Then $\mathcal J\in \underline{\mathrm{VSP}}(G_d, d+2)$ if and only if $\mathcal J = \mathrm{Ann}(G_d)_{\leq d-1}+ \mathcal Q$, 
where $\mathcal Q$ is the principal ideal generated by a form $q\in S^{d+2}\langle y_0,y_1\rangle$. 
\begin{proof}
Let $\mathcal J\in  \underline{\mathrm{VSP}}(G_d, d+2)$ and let $\mathcal I =  \mathrm{Ann}(G_d)_{\leq d-1}$. 

Then $\mathcal J\in \mathrm{Slip}_{d+2,\PP^{d+1}}$ and, by definition, the Hilbert function of $\mathcal J$ is the generic Hilbert function of $d+2$ points in $\PP^{d+1}$. Since $\mathcal J\subset \mathrm{Ann}(G_d)$, it follows that $\mathcal J\supset \mathcal I$, as they must coincide up to degree $d-1$. 

Now, consider the Hilbert function of $\mathcal I$. The following relations hold in $T / \mathcal{I}$:
\begin{align*}
x_i x_j \equiv 0 \ \ (\text{mod } \mathcal{I}) \quad & \mbox{ for } \, 0 \leq i,j \leq d-1, \\
c_i x_i y_1 \equiv x_{i+1} y_0 \ \ (\text{mod } \mathcal{I}) \quad & \mbox{ for } \, -1 \leq i \leq d-1,
\end{align*}
where $c_{i}$ is a non-zero constant for every $i$ and $x_{-1} = x_d := 0$. Then it is a direct computation to show that $\mathrm{HF}(T/\mathcal I, d) = \mathrm{HF}(T/\mathcal I, d+1) = d+2$ and $\mathrm{HF}(T/\mathcal I, d+2) = d+3$. Moreover, one has
\[
\left(T/\mathcal I\right)_{d+2} = \langle y_0^{d+2}, y_0^{d+1}y_1,\ldots, y_1^{d+2}\rangle = S^{d+2}\langle y_0,y_1\rangle.
\] 
Thus if $\mathcal{J} \in \underline{\mathrm{VSP}}(G_d,d+2)$ then $\mathcal J$ contains $\mathcal I$ and an ideal $\mathcal Q$ generated by some element $q\in \left(T/\mathcal I\right)_{d+2}$.  

For the converse, let $\mathcal J_{\mathcal{Q}} = \mathrm{Ann}(G_d)_{\leq d-1}+ \mathcal Q$, for some $\mathcal Q=\langle q\rangle$ such that $q \in S^{d+2} \langle y_0,y_1 \rangle$. Note that $\mathrm{HF}(T / \mathcal{J}_{\mathcal{Q}},d+2) = d+2$. We can apply Gotzmann's Persistence Theorem \cite[Theorem 3.8]{Green} to conclude that $\mathrm{HF}(T / \mathcal{J}_{\mathcal{Q}},j) = d+2$ for all $j \geq d+2$. 

To show that all such $\mathcal{J}_{\mathcal{Q}}$ are in fact in $\underline{\mathrm{VSP}}(G_d,d+2)$, consider $\mathcal{Q} = \langle q \rangle$ where $q$ is a form with $d+2$ distinct roots. This gives us $d+2$ distinct points $z_1,\ldots, z_{d+2}\in \PP(\langle y_0,y_1 \rangle^{*})$. By Lemma~\ref{borderankGd} and Corollary~\ref{tangentcone1}, we may find a border rank decomposition defining a family of zero-dimensional schemes $R(t)$, whose limiting scheme is the smooth scheme $Z$ supported at $z_1,\ldots, z_{d+2}$. 

 Let $\mathcal I_{Z}$ be the radical ideal defining the limiting scheme $Z$. By Theorem \ref{correspondence}, there is a corresponding ideal $\mathcal J'\in  \underline{\mathrm{VSP}}(G_d,d+2)$ such that $\mathcal J'^{sat} = \mathcal I_{Z}$. Note that $\mathcal I_{Z} = \mathcal J_{\mathcal Q}^{sat}$. 

Since $\mathcal{J}'\in \underline{\mathrm{VSP}}(G_d,d+2)$, by the first part of this proof, $\mathcal{J}' = \mathcal{J}_{\mathcal{Q'}}$, for some $\mathcal Q' = \langle q'\rangle$. Therefore one has $\mathcal J_{\mathcal Q'}^{sat} = \mathcal I_{Z}$.  Since $\mathcal J_{\mathcal Q'}^{sat} = \mathcal J_{\mathcal Q}^{sat}$, we must have $\mathcal Q = \mathcal Q'$. In conclusion, we derive $\mathcal J_{\mathcal Q} = \mathcal J' \in \underline{\mathrm{VSP}}(G_d,d+2)$. 

Now, consider the morphism

\begin{align*}
\psi_d: \underline{\mathrm{VSP}}(G_d,d+2)&\longrightarrow   \PP((T / \mathcal{I})_{d+2}), \\
  \mathcal{J}_{\mathcal{Q}} & \longmapsto  [q].
\end{align*}
This morphism is proper and hence closed. We have shown that the generic point of $\PP((T / \mathcal{I})_{d+2})$ lies in the image, therefore $\psi_d$ is surjective. 
\end{proof}
\end{proposition}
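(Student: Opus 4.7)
\bigskip

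\noindent\textbf{Proof proposal for Proposition \ref{structureidealVSPGd}.}

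The plan is to prove the two implications separately, with the forward direction being essentially a Hilbert function bookkeeping argument and the reverse direction passing through the concrete border rank decompositions provided by Lemma \ref{borderankGd}.

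For the forward implication, let $\mathcal{I} = \mathrm{Ann}(G_d)_{\leq d-1}$. First I would analyze the quotient $T/\mathcal I$ using the relations coming from the defining monomials of $G_d$: the products $x_ix_j$ all lie in $\mathcal I$, and the products $x_iy_j$ reduce via constant-multiple identifications coming from the Veronese-type structure of $G_d$. From these relations a straightforward monomial count gives $\mathrm{HF}(T/\mathcal I,d) = \mathrm{HF}(T/\mathcal I,d+1) = d+2$ and $\mathrm{HF}(T/\mathcal I,d+2) = d+3$, with the single extra class in degree $d+2$ represented by $S^{d+2}\langle y_0,y_1\rangle$ modulo $\mathcal I$. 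Now let $\mathcal J \in \underline{\mathrm{VSP}}(G_d,d+2)$. Since $\mathcal J \subset \mathrm{Ann}(G_d)$ and has the generic Hilbert function of $d+2$ points in $\PP^{d+1}$, a dimension count forces $\mathcal J_j = \mathrm{Ann}(G_d)_j$ for $j \leq d-1$, hence $\mathcal I \subset \mathcal J$. Comparing $\mathrm{HF}(T/\mathcal J,d+2) = d+2$ with $\mathrm{HF}(T/\mathcal I,d+2) = d+3$ forces $\mathcal J$ to contain exactly one additional class in degree $d+2$ modulo $\mathcal I$, which must be a single form $q \in S^{d+2}\langle y_0,y_1\rangle$.

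For the reverse implication, let $\mathcal J_q = \mathcal I + \langle q \rangle$ with $q \in S^{d+2}\langle y_0,y_1\rangle$. A direct calculation shows $\mathrm{HF}(T/\mathcal J_q,d+2) = d+2$, and Gotzmann's Persistence Theorem ensures the Hilbert polynomial stabilizes to the constant $d+2$. It remains to realize $\mathcal J_q$ as a flat limit of ideals of $d+2$ distinct reduced points with generic Hilbert function. I would first handle the dense open case where $q$ has $d+2$ distinct roots. These determine $d+2$ distinct points on the rational normal curve parameterized by $y_0,y_1$. Using Lemma \ref{borderankGd} and Corollary \ref{tangentcone1}, I would produce a border rank decomposition whose limiting scheme is the reduced union of these $d+2$ points, and then appeal to Theorem \ref{correspondence} to obtain a corresponding $\mathcal J' \in \underline{\mathrm{VSP}}(G_d,d+2)$. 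By the forward direction already proved, $\mathcal J' = \mathcal I + \langle q' \rangle$, and comparing saturations (both must equal the radical ideal of the $d+2$ distinct points) forces $\langle q' \rangle = \langle q \rangle$, so $\mathcal J_q \in \underline{\mathrm{VSP}}(G_d,d+2)$.

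To conclude for arbitrary $q \in S^{d+2}\langle y_0,y_1\rangle$, I would introduce the morphism
\[
\psi_d : \underline{\mathrm{VSP}}(G_d,d+2) \longrightarrow \PP\bigl((T/\mathcal I)_{d+2}\bigr), \qquad \mathcal J_q \longmapsto [q],
\]
which is well-defined by the forward direction. Since $\underline{\mathrm{VSP}}(G_d,d+2)$ is closed in the multigraded Hilbert scheme, which is projective, $\psi_d$ is proper and hence closed. The previous paragraph shows that the image contains the dense open subset of classes represented by forms with $d+2$ distinct roots, so by closedness $\psi_d$ is surjective, giving the assertion for all $q$.

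The main obstacle is the Hilbert function computation for $T/\mathcal I$ in degrees $d$, $d+1$, $d+2$, together with the identification of the single ``extra'' degree-$(d+2)$ class as living in $S^{d+2}\langle y_0,y_1\rangle$; everything else is formal, but this step is what pins down both the source and target of $\psi_d$ and makes the whole strategy work.
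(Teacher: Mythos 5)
Your proposal is correct and follows essentially the same route as the paper: both directions hinge on the same computation of $\mathrm{HF}(T/\mathcal I)$ in degrees $d$, $d+1$, $d+2$, the reverse direction is handled for $q$ with distinct roots via Lemma \ref{borderankGd}, Corollary \ref{tangentcone1}, and Theorem \ref{correspondence}, and the general case is then obtained from the properness (hence closedness) of $\psi_d$ together with the density of the resolved locus. No substantive difference from the paper's argument.
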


Let $S^{d+2} \PP^1$ denote the $(d+2)$-fold symmetric product of the projective line. Proposition \ref{structureidealVSPGd} yields the following:

\begin{theorem}\label{vspprojectivespaces}
The projective variety $\underline{\mathrm{VSP}}(G_d, d+2)$ is isomorphic to the projective space $\PP^{d+2}\cong \PP(S^{d+2}\CC^2)\cong S^{d+2} \PP^1$.
\begin{proof}
The morphism $\psi_d$ in the proof of Proposition \ref{structureidealVSPGd} is surjective. It is also injective because the point $[q]$ uniquely determines the ideal $\mathcal{I}_{\mathcal{Q}}$. Since $\PP^{d+2}$ is smooth and so normal, the map $\psi_d$ is an isomorphism by a variant of Zariski's Main Theorem \cite[Corollary 4.6]{Liu}. The isomorphism follows from the description of the vector space $\left(T/\mathcal I\right)_{d+2}$ given in the proof of Proposition \ref{structureidealVSPGd}. 
\end{proof}
\end{theorem}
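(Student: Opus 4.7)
The strategy is to promote the set-theoretic parametrization already produced in Proposition~\ref{structureidealVSPGd} into a scheme-theoretic isomorphism. Concretely, Proposition~\ref{structureidealVSPGd} shows that every ideal in $\underline{\mathrm{VSP}}(G_d,d+2)$ has the form $\mathcal J_{\mathcal Q} = \mathrm{Ann}(G_d)_{\leq d-1} + \langle q\rangle$ with $q \in S^{d+2}\langle y_0,y_1\rangle$, and that conversely every such $q$ arises. Since the target of the resulting map is a projective space, the natural candidate for the isomorphism is the morphism
\[
\psi_d : \underline{\mathrm{VSP}}(G_d,d+2) \longrightarrow \PP\bigl((T/\mathcal I)_{d+2}\bigr), \quad \mathcal J_{\mathcal Q} \longmapsto [q],
\]
already introduced in the proof of Proposition~\ref{structureidealVSPGd}, after identifying $(T/\mathcal I)_{d+2}$ with $S^{d+2}\langle y_0,y_1\rangle\cong \CC^{d+3}$.

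The first step of the plan is to argue that $\psi_d$ is a genuine morphism of schemes, not merely a map on closed points. For this I would note that $\underline{\mathrm{VSP}}(G_d,d+2)$ sits as a closed projective subscheme of the multigraded Hilbert scheme, and that on a universal family $\psi_d$ is obtained by taking the degree $(d+2)$ graded piece modulo the fixed subspace $\mathcal I_{d+2}$; this produces a line bundle quotient and hence a morphism to $\PP^{d+2}$. Surjectivity of $\psi_d$ has already been obtained in Proposition~\ref{structureidealVSPGd} via the tangential border-rank decomposition of Lemma~\ref{borderankGd} combined with Corollary~\ref{tangentcone1} and Theorem~\ref{correspondence}: the generic degree $d+2$ form $q$ with $d+2$ distinct roots gives $d+2$ points on $\nu_d(\PP^1)$ that reconstruct the corresponding $\mathcal J_{\mathcal Q}$ as a flat limit of saturated ideals of $d+2$ points. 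For injectivity, I would simply invoke the uniqueness assertion of Proposition~\ref{structureidealVSPGd}: the fixed summand $\mathrm{Ann}(G_d)_{\leq d-1}$ together with the class $[q]$ completely determines the ideal $\mathcal J_{\mathcal Q}$.

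At this stage $\psi_d$ is a projective bijective morphism onto the normal (in fact smooth) variety $\PP^{d+2}$. The final step is to upgrade this bijection to a scheme-theoretic isomorphism. The plan here is to invoke a form of Zariski's Main Theorem: a projective, bijective morphism to a normal variety is finite, and a finite birational morphism to a normal variety is an isomorphism. The required birationality follows from bijectivity in characteristic zero, yielding the sought isomorphism $\psi_d$. The closing identifications $\PP^{d+2}\cong\PP(S^{d+2}\CC^2)\cong S^{d+2}\PP^1$ are classical, the latter coming from the identification of an unordered $(d+2)$-tuple of points on $\PP^1$ with its defining binary form of degree $d+2$ up to scaling.

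The substantive obstacle in this plan is not the bijectivity or the normality argument, both of which are transparent once Proposition~\ref{structureidealVSPGd} is in hand; rather, it is verifying that $\psi_d$ is an honest morphism of schemes with the claimed functorial description of its fibers. This hinges on the fact that the Hilbert function is locally constant on $\underline{\mathrm{VSP}}(G_d,d+2)$, so the subspace $\mathcal J_{d+2}\subset T_{d+2}$ has constant codimension one in $\mathcal I_{d+2}^{\perp}\cap \langle y_0,y_1\rangle^{d+2}$-direction, giving a well-defined line subbundle whose projectivization is $\psi_d$. Once this is established, the rest of the argument proceeds as sketched.
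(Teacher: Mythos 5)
Your proposal is correct and takes essentially the same route as the paper: surjectivity and injectivity of $\psi_d$ read off from Proposition~\ref{structureidealVSPGd}, followed by Zariski's Main Theorem applied to a bijective projective morphism onto the normal variety $\PP^{d+2}$. The extra discussion you provide of why $\psi_d$ is a genuine scheme morphism (via a line-bundle quotient on the universal family) is a useful expansion of a point the paper leaves implicit, but it does not constitute a different proof strategy.
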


\section{An infinite series of wild cubics and their $\underline{\mathrm{VSP}}$}\label{wildcubics}

For every $k\geq 1$ and $n=3k+1$, we introduce the following infinite series of concise cubic forms: 

\[
F_n = x_0x_1^2 + x_1x_2x_4 + x_3x_4^2+ x_4x_5x_7 + x_6x_7^2 + 
\]
\[
+ x_8x_7x_{10} + x_9x_{10}^2 +\cdots + x_{n-4}x_{n-3}^2 + 
\]
\[
+ x_{n-3}x_{n-2}x_n + x_{n-1}x_n^2.
\]
\vspace{2mm}
\begin{remark}
This infinite series is inspired by the examples appeared in \cite{gr15}, which in turn 
are a generalization of the {\it Perazzo cubic hypersurface} $\lbrace F_4 = 0\rbrace\subset \PP^4$. The latter is exactly
the wild cubic found in \cite[\S 4]{bb15}. 
\end{remark}

Let $\mathrm{Ann}(F_n)\subset T = \CC[y_0,\ldots, y_n]$ be the annihilator of $F_n$ with $y_i$ being dual to $x_i$.

\begin{remark}
Note that $F_4$ coincides with $G_3$ from \S \ref{higherdeg} up to change of basis. So $\underline{\mathrm{VSP}}(F_4,5)\cong \PP^5$. 
\end{remark}

The following combinatorial arrangement of lines is important for us to study the infinite series $F_n$; Proposition \ref{chainlines}
provides the motivation for looking at it. 

\begin{definition}[{\bf Chains of lines}]\label{chain}
A {\it chain of lines} is a collection of distinct lines $C^{1},\ldots, C^{m} = \PP^1$ such that (up to reindexing) for $1\leq i,j\leq m$: 
\[
C^{i} \cap C^{i+1} \neq \emptyset, \mbox{ and }
\]
\[
C^i\cap C^j = \emptyset \mbox{ otherwise }. 
\] 
\end{definition}

\begin{proposition}\label{chainlines}
The projective scheme whose ideal is $\left(\mathrm{Ann}(F_n)_2\right)^{sat}$ is a chain of lines.
\begin{proof}
Let $\mathcal I = \mathrm{Ann}(F_n)_2$ and let $\mathcal I^{sat}$ denote its saturation. Let $n=3k+1$ for $k\geq 1$. 
We divide the proof according to the residue $(\mathrm{mod}\ 3)$ of each index $0\leq j\leq n$ in $y_j$. \\

\begin{itemize}
\item $j\equiv 0 \ (\mathrm{mod} \ 3)$. The monomial $y_jy_i\in \mathcal I$ for all $i\neq j+1$. Moreover,
$y_jy_{j+1}^3\in \mathcal I$, therefore $y_j\in \mathcal I^{sat}$. \\

\item $j\equiv 2 \ (\mathrm{mod} \ 3)$. In this case, one has $y_jy_i\in \mathcal I$ for all $i\neq j-1,j+2$. 
Note that $y_jy_{j-1}^2, y_jy_{j+2}^2\in \mathcal I$.  Therefore $y_j\in \mathcal I^{sat}$. \\

\item $j\equiv 1 \ (\mathrm{mod} \ 3)$. In this case, $y_jy_{j+3h}\in \mathcal I\subset \mathcal I^{sat}$ for $h\geq 2$. 

For $i\equiv 1 \ (\mathrm{mod} \ 3)$, $y_i\notin \mathcal I^{sat}$. We show that $y_i^k\notin \mathcal I$ for any $k\geq 2$. 
Assuming on the contrary that $y_i^k\in \mathcal I$, one has $y_i^k = \sum_{j=0}^s m_j h_j$, where the $h_j$ are the generators of $\mathcal I$ and $m_j\in T$. 
However, on the right-hand side, every monomial that is divisible by $y_i$ is divisible by some other distinct $y_j$ as well. 

We show that $y_iy_{i+3}\notin \mathcal I^{sat}$. On the contrary, suppose $y_iy_{i+3}\in \mathcal I^{sat}$. Thus, there exists $k>1$ such that $\left(y_iy_{i+3}\right)^k\in \mathcal I$. Using the same argument as above, we see that every monomial in $\mathcal I$, that is divisible by $y_iy_{i+3}$, must be divisible by some other distinct $y_j$ as well. 
\end{itemize}

In conclusion, the saturated ideal $\mathcal I^{sat}$ is generated by: 
\[
\mathcal I^{sat} = \left\langle y_{3h}, y_{3h+2}, y_{3h+1}y_{3(h+s)+1}, h\geq 0, s\geq 2\right\rangle
\]

The projective scheme defined by $\mathcal I^{sat}$ is a chain of $k$ lines $C_k$, as in Definition \ref{chain}. Its irreducible components are lines $L_{i,j}$, such that $|i-j|=3$ and  $i\equiv 1 \ (\mathrm{mod} \ 3)$, whose ideal is defined by 
\[
\mathcal J_{i,j} = \left\langle y_k \ | \ k\neq i, j\right\rangle. 
\]
This concludes the proof. 
\end{proof}
\end{proposition}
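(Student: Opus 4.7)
The plan is to compute $\mathrm{Ann}(F_n)_2$ directly from the monomial structure of $F_n$, then describe its saturation by a case analysis based on the residue $j \bmod 3$, since the defining cubic has an evident period-$3$ pattern. Write $\mathcal I = \mathrm{Ann}(F_n)_2$. A quadratic monomial $y_i y_j$ lies in $\mathcal I$ precisely when $x_i x_j$ does not appear as a factor of any monomial summand of $F_n$; non-monomial quadrics contribute nothing beyond what monomials already give because the summands of $F_n$ have disjoint enough supports. I would start by tabulating, for each $j$, the short list of indices $i$ such that $y_i y_j \notin \mathcal I$; this is determined by which summands of $F_n$ contain $x_j$ and with what multiplicity.

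Next, I would split into the three cases $j \equiv 0, 1, 2 \pmod 3$. For $j \equiv 0 \pmod 3$, the variable $x_j$ occurs only once in $F_n$, as the linear factor in $x_j x_{j+1}^2$; hence $y_j y_i \in \mathcal I$ for every $i \neq j+1$, and the remaining obstruction $y_j y_{j+1}$ is killed by a further factor of $y_{j+1}^2$, placing $y_j$ in $\mathcal I^{sat}$. For $j \equiv 2 \pmod 3$, the variable $x_j$ appears linearly in two summands, and the same saturation argument (now multiplying by $y_{j-1}^2$ and $y_{j+2}^2$) yields $y_j \in \mathcal I^{sat}$. The remaining, and more delicate, case is $j \equiv 1 \pmod 3$: there $x_j$ appears squared, so $y_j \notin \mathcal I^{sat}$, and one must track which long-range products $y_j y_{j+3s}$ survive.

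The main obstacle is the converse direction: proving that certain elements are genuinely not in $\mathcal I^{sat}$. Concretely, one must show that no power of $y_{3h+1}$, nor any power of the near-neighbor product $y_{3h+1} y_{3(h+1)+1}$, lies in $\mathcal I$. I would handle this by a support argument: every generator of $\mathcal I$ is a quadric supported on at least two distinct variables chosen from a specific incidence pattern, so every monomial appearing in any element of $\mathcal I$ that is divisible by $y_{3h+1}$ must also be divisible by some other variable $y_k$ with $k \not\equiv 1 \pmod 3$ or with $|k - (3h+1)| \geq 6$. This incompatibility with pure powers of $y_{3h+1}$ (or of $y_{3h+1} y_{3(h+1)+1}$) rules them out of the saturation. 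Combining these two directions yields the explicit description
\[
\mathcal I^{sat} = \langle y_{3h},\ y_{3h+2},\ y_{3h+1} y_{3(h+s)+1} \ : \ h \geq 0,\ s \geq 2 \rangle.
\]

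Finally, I would read off the geometry. The linear generators force the scheme to lie in the coordinate subspace $\PP^k \subset \PP^n$ cut out by the vanishing of all $y_i$ with $i \not\equiv 1 \pmod 3$; on this $\PP^k$, the surviving quadratic relations $y_{3h+1} y_{3h'+1} = 0$ for $|h-h'| \geq 2$ are precisely the equations of the union of the $k$ lines $L_{3h+1,\,3h+4}$, each cut out by requiring only two consecutive residue-$1$ coordinates to be nonzero. Two such lines $L_{3h+1,\,3h+4}$ and $L_{3h+4,\,3h+7}$ meet at the single coordinate point where only $y_{3h+4}$ survives, while non-adjacent lines are disjoint. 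This is exactly the incidence pattern of Definition \ref{chain}, so the scheme defined by $\mathcal I^{sat}$ is a chain of $k$ lines, completing the proof.
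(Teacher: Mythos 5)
Your proposal follows the same route as the paper's own proof: the same residue-mod-$3$ case split, the same observation that for $j\equiv 0$ and $j\equiv 2\pmod 3$ the variable $y_j$ lands in $\mathcal I^{sat}$ after multiplying by suitable powers of its neighbors, the same monomial-support argument to show $y_{3h+1}$ and $y_{3h+1}y_{3h+4}$ do not lie in the saturation, and the same geometric read-off of the chain of lines from the resulting monomial ideal. The approaches are essentially identical.

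One small caveat worth flagging, which applies equally to the paper's write-up: $\mathcal I=\mathrm{Ann}(F_n)_2$ is not a monomial ideal — since $F_n$ is a sum of cubic monomials in which pairs of monomials share a common variable (e.g.\ $x_0x_1^2$ and $x_1x_2x_4$ share $x_1$), the kernel of the catalecticant contains binomial relations such as $y_0y_1-2y_2y_4$. Your sentence "non-monomial quadrics contribute nothing beyond what monomials already give because the summands of $F_n$ have disjoint enough supports'' is therefore not literally true, and a careful verification that $y_j\in\mathcal I^{sat}$ for $j\equiv 0,2$ actually needs to use these binomials: for example, $y_0y_1^N$ is not divisible by any monomial in $\mathcal I$, and one must pass through the binomial $y_0y_1-2y_2y_4$ and then again through a relation like $y_1y_2-\tfrac12 y_3y_4$ before the monomial $y_1y_3\in\mathcal I$ appears as a factor. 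The non-membership direction, however, does go through exactly as you describe, because the relevant monomials $y_{3h+1}^2$ and $y_{3h+1}y_{3h+4}$ each map under the catalecticant to a variable $x_m$ hit by no other quadratic monomial, so no element of $\mathcal I$ can contain either term. This is a genuine omission in both your sketch and the paper's, but not a different method.
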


\noindent {\it Notation.} In the following, let $C_k$ denote the chain of lines appearing in the proof of Proposition \ref{chainlines}, corresponding to the cubic $F_n$ when $n=3k+1$. Moreover, $C_k$ comes equipped with an ordering on its components $C_k^{h}$ induced by the lexicographic order on the variables $y_1> y_4 > \cdots > y_{3k+1}$. Therefore, the $h$-th component of $C_k$ refers to the line where the homogeneous coordinates are $y_{3h-2}$ and $y_{3h+1}$.

\begin{proposition}\label{borderankFn}
Let $n\geq 4$. The form $F_n$ has minimal border rank $n+1$. 
\begin{proof}
By conciseness, $\underline{{\bf r}}(F_n)\geq n+1$. We show the opposite inequality as follows. 
Let $n=3k+1$ and consider the powers of linear forms
\[
\ell_{1,1}^3, \ \ \ell_{1,2}^3, \ \  \ell_{1,3}^3, \ \ \ell_{1,4}^3,
\]
\[
 \ \ \ \ \ \ell_{2,2}^3, \ \ \ell_{2,3}^3, \ \ \ell_{2,4}^3, 
\]
\[
\cdots \ \ \cdots \ \ \cdots
\]
\[
 \ \ \ \ \ \ell_{k-1,2}^3, \ \ \ell_{k-1,3}^3, \ \ \ell_{k-1,4}^3, 
\]
\[
\ell_{k,1}^3, \ \ \ell_{k,2}^3, \ \  \ell_{k,3}^3, \ \ \ell_{k,4}^3,
\]
where $\ell_{i,j}$ is a linear form defined on the $i$-th component of the chain of lines $C_k$, i.e. $\ell_{i,j}$ depends only on
corresponding two variables. Note that there are $n+1$ of such forms. Moreover, we require $\ell^3_{i,j}\neq x_{3i-2}^3, x_{3i+1}^3$ for $1\leq i\leq k-1$, where the latter cubes correspond to the intersections between the $i$-th component of $C_k$ and the other lines in $C_k$. 
This may be regarded as a configuration of points on the lines $C_k^{i}$ of the chain $C_k$; an instance of this is depicted in Figure \ref{fig:config}. 

Write $\PP\widehat{T}_{\ell^3}$ for the affine cone of the Zariski tangent space to $\nu_3(\PP^n)$ at the point $\ell^3$. As in the proof of Proposition \ref{borderankGd},
for $d=3$, we have

\[
F_4 = x_0x_1^2 + x_1x_2x_4 + x_3x_4^2 \in \left\langle \PP\widehat{T}_{\ell_{1,j}^{3}} \mbox{ for } 2\leq j\leq 4 \right\rangle. 
\]

More generally, for each $1\leq i\leq k$, one has: 
\[
H_i = x_{3i-3}x_{3i-2}^2 + x_{3i-2}x_{3i-1}x_{3i+1} + x_{3i}x_{3i+1}^2\in  \left\langle \PP\widehat{T}_{\ell_{i,j}^{3}} \mbox{ for } 2\leq j\leq 4 \right\rangle.
\]
Thus $F_n  \in \left\langle \PP\widehat{T}_{\ell_{i,j}^{3}} \mbox{ for } 1\leq i\leq k, 2\leq j\leq 4 \right\rangle$.

The forms $\ell_{i,j}^3$ are linearly dependent. Indeed, consider the forms on the first component $C_k^{1}$ of the chain:  
$\ell_{1,1}^3, \ell_{1,2}^3, \ell_{1,3}^3$ and $\ell_{1,4}^3$ are linearly independent and span every cubic form defined on $C_k^{1}$. 
Thus the cube $x_4^3$ may be written as 
\[
x_4^3 = \sum_{j=1}^4 \lambda_j \ell_{1,j}^3.  
\]
On the line $C_k^{2}$, we have the linear relation: 
\[
\mu_1 x_4^3 + \left(\sum_{j=2}^4 \mu_j \ell_{2,j}^3\right) + \mu_5 x_7^3 = 0. 
\]
So we may rewrite the cube $x_7^3$ as a linear combination of the $\ell_{i,j}^3$ with $1\leq i\leq 2$. Proceed similarly up to $C_k^{k-1}$, where
we express $x_{3k-2}^3$ as a linear combination of $\ell_{i,j}^3$ with $1\leq i\leq k-1$. Finally, on the line $C_k^k$ use the linear relation among
the five cubic forms $x_{3k-2}^3, \ell_{k,1}^3, \ell_{k,2}^3, \ell_{k,3}^3$, and $\ell_{k,4}^3$. This process gives a linear relation among the $\ell_{i,j}^3$.

Now, to conclude, use Proposition \ref{tangentcones} which yields $\underline{{\bf r}}(F_n)\leq n+1$. 
\end{proof}
\end{proposition}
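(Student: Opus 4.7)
The approach will follow the template of Lemma \ref{borderankGd}, exploiting the fact that $F_n$ decomposes into blocks $H_i = x_{3i-3}x_{3i-2}^2 + x_{3i-2}x_{3i-1}x_{3i+1} + x_{3i}x_{3i+1}^2$ for $1 \leq i \leq k$, each of which (after relabelling) is isomorphic to $G_3 = F_4$ on its own six variables. Since $F_n$ is concise in $n+1$ variables, the lower bound $\underline{{\bf r}}(F_n) \geq n+1$ is automatic, so the real work is to produce a border rank decomposition of size $n+1$. The plan is to apply Proposition \ref{tangentcones}: exhibit $n+1$ points on $\nu_3(\PP^n)$ whose span has dimension strictly less than $n$, and verify that $F_n$ lies in the span of the affine cones of their Zariski tangent spaces.

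For the configuration of points, I would choose, on each component $C_k^i$ of the chain of lines from Proposition \ref{chainlines} (which depends only on the variables $x_{3i-2}, x_{3i+1}$), a suitable collection of linear forms $\ell_{i,j}$. Since $S^3\langle x_{3i-2}, x_{3i+1}\rangle$ is four-dimensional, taking three generic cubes $\ell_{i,j}^3$ ($j = 2, 3, 4$) on each middle block contributes $3(k-2)$ tangent directions; adding four cubes $\ell_{1,j}^3$ and $\ell_{k,j}^3$ ($j = 1, 2, 3, 4$) on the first and last components brings the total to $3k + 2 = n+1$. One should moreover insist that $\ell_{i,j}^3 \neq x_{3i-2}^3, x_{3i+1}^3$ for interior indices, since these "boundary" cubes play the role of bridges between consecutive blocks. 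With this configuration, the membership $F_n \in \langle \PP\widehat{T}_{\ell_{i,j}^3}\rangle$ follows block by block: each $H_i$, being a copy of $G_3$, lies in the tangent-cone span of $\ell_{i,j}^3$ for $j = 2, 3, 4$ by the argument of Lemma \ref{borderankGd}, and summing over $i$ yields the desired membership $F_n = \sum_i H_i$ in the full tangent-cone span.

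The crux of the argument, and the step I expect to be the main obstacle, is the linear dependence of the $n+1$ chosen points. I anticipate a chain induction across the components. On $C_k^1$, the four cubes $\ell_{1,j}^3$ span all of $S^3\langle x_1, x_4\rangle$, so $x_4^3$ can be expressed as a linear combination of them. Passing to $C_k^2$, the five cubes $x_4^3, \ell_{2,2}^3, \ell_{2,3}^3, \ell_{2,4}^3, x_7^3$ all live in the four-dimensional space $S^3\langle x_4, x_7\rangle$ and are therefore linearly dependent, allowing one to eliminate $x_7^3$ in favour of earlier forms. Iterating along the chain, one arrives on $C_k^k$ with the five cubes $x_{3k-2}^3, \ell_{k,1}^3, \ell_{k,2}^3, \ell_{k,3}^3, \ell_{k,4}^3$ inside a four-dimensional space, producing a final linear relation that unwinds to a non-trivial linear dependence among the original $\ell_{i,j}^3$. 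The delicate point is to choose the $\ell_{i,j}$ so that no premature dependence arises on any individual line and the propagation of relations is genuinely non-degenerate; once this is arranged, Proposition \ref{tangentcones} gives $\underline{{\bf r}}(F_n) \leq n+1$ and closes the argument.
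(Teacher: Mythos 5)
Your proposal matches the paper's proof essentially verbatim: the same configuration of $n+1 = 4 + 3(k-2) + 4$ cubes $\ell_{i,j}^3$ along the chain $C_k$, the same block-by-block membership of $F_n = \sum_i H_i$ in the tangent-cone spans via the $G_3$ argument, the same propagation of $x_4^3, x_7^3, \ldots, x_{3k-2}^3$ across the chain to produce the required linear dependence, and the same concluding appeal to Proposition \ref{tangentcones}. (Minor slip: each block $H_i$ depends on five variables, not six.)
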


\begin{center}
\begin{figure}
\caption{The chain of lines $C_k$ and a configuration of points featured in the proof of Proposition \ref{borderankFn}, for $k=4$ and $n=13$.}
\label{fig:config}
\includegraphics[scale=0.5]{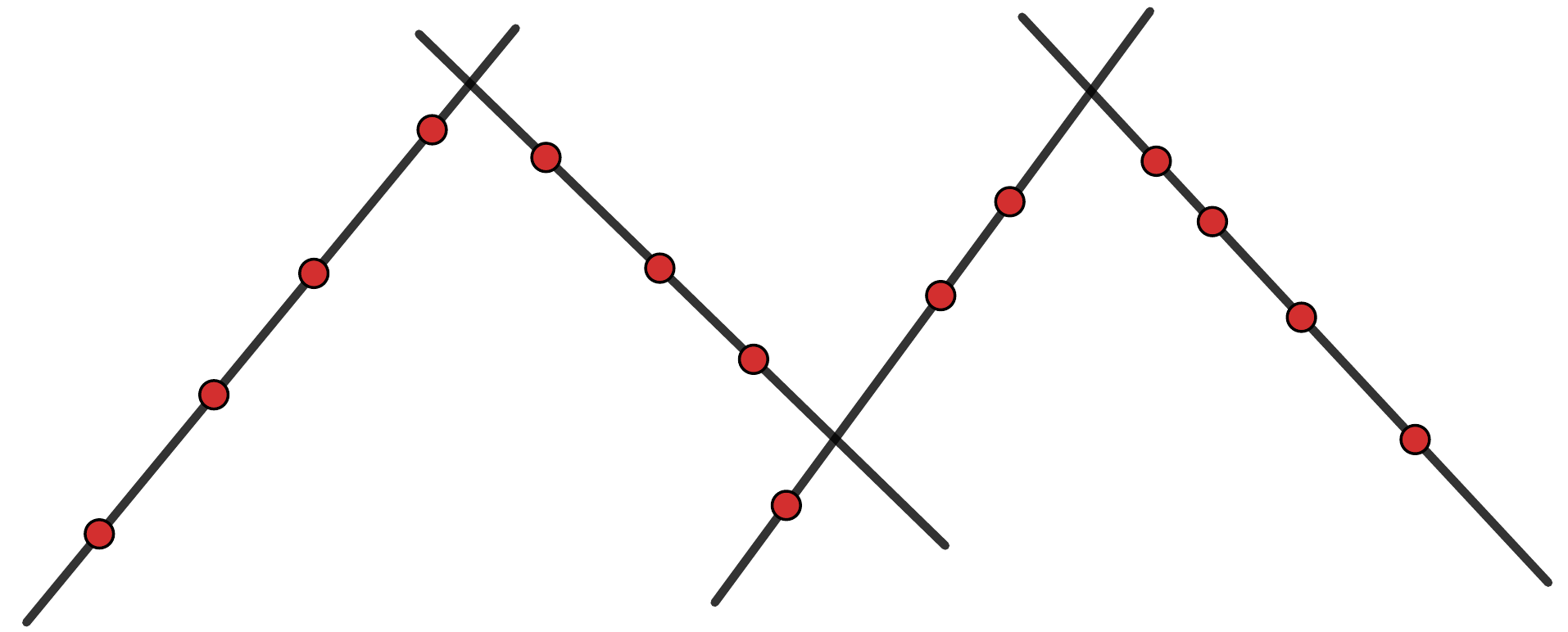}
\end{figure}
\end{center}

\begin{remark}
For every $k\geq 1$ and $n=3k+1$, one has 
\[
n+1 = \underline{{\bf r}}(F_n) < \mathrm{cr}(F_n)\leq \mathrm{sr}(F_n)\leq 6k = 2(n-1),
\] 
as every $2$-jet is smoothable and so is their union. 
\end{remark}

\begin{corollary}\label{infiniteseries}
The cubics $F_n$ are wild. 
\begin{proof}
Their Hessian is vanishing, as the partial derivatives are algebraically dependent. By Proposition \ref{borderankFn}, $F_n$ has minimal 
border rank. Theorem \ref{hessvswild} shows that the cubics $F_n$ are wild. 
\end{proof}
\end{corollary}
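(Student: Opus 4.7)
The plan is to apply Theorem~\ref{hessvswild} directly to $F_n$. That theorem requires three hypotheses for wildness: that $F_n$ is concise, that $F_n$ has minimal border rank, and that $\mathrm{Hess}(F_n) = 0$. My strategy is to verify each of these in turn, and then simply invoke the theorem.

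First, conciseness is immediate by inspection: every variable $x_0, x_1, \ldots, x_n$ appears in at least one monomial of $F_n$, so $\mathrm{Ann}(F_n)$ contains no linear form. Second, minimal border rank is precisely what Proposition~\ref{borderankFn} established, namely $\underline{{\bf r}}(F_n) = n+1 = \dim V$; this is the only genuinely nontrivial input and has already been dispatched. Third, for the vanishing Hessian, I would exhibit explicit polynomial relations among the partials. Decomposing $F_n = \sum_{i=1}^k H_i$ with $H_i = x_{3i-3}x_{3i-2}^2 + x_{3i-2}x_{3i-1}x_{3i+1} + x_{3i}x_{3i+1}^2$, the three partials
\[
\frac{\partial F_n}{\partial x_{3i-3}} = x_{3i-2}^2, \qquad \frac{\partial F_n}{\partial x_{3i-1}} = x_{3i-2}x_{3i+1}, \qquad \frac{\partial F_n}{\partial x_{3i}} = x_{3i+1}^2
\]
satisfy the nontrivial polynomial relation
\[
\Bigl(\tfrac{\partial F_n}{\partial x_{3i-3}}\Bigr)\Bigl(\tfrac{\partial F_n}{\partial x_{3i}}\Bigr) = \Bigl(\tfrac{\partial F_n}{\partial x_{3i-1}}\Bigr)^{\!2}
\]
for each $1 \leq i \leq k$. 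By the classical Gordan--Noether criterion (see \cite[\S 7.2]{russo}), algebraic dependence of the partial derivatives is equivalent to identical vanishing of the Hessian determinant, so $\mathrm{Hess}(F_n) = 0$.

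With the three hypotheses verified, Theorem~\ref{hessvswild} immediately yields $\mathrm{sr}(F_n) > \underline{{\bf r}}(F_n) = n+1$, i.e.~$F_n$ is wild. There is no real obstacle: the technical heart of the corollary is the border rank estimate in Proposition~\ref{borderankFn}, and once conciseness and the algebraic dependence displayed above are in hand, the main theorem does all the remaining work.
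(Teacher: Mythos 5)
Your argument follows exactly the paper's route: check that $F_n$ is concise, invoke Proposition~\ref{borderankFn} for minimal border rank, exhibit an algebraic relation among the partial derivatives to conclude $\mathrm{Hess}(F_n)=0$ via Gordan--Noether, and then apply Theorem~\ref{hessvswild}. One small inaccuracy: $F_n \ne \sum_{i=1}^k H_i$, since consecutive blocks $H_i$ and $H_{i+1}$ share the term $x_{3i}x_{3i+1}^2$; however, the displayed formulas for the partials $\partial F_n/\partial x_{3i-3}$, $\partial F_n/\partial x_{3i-1}$, $\partial F_n/\partial x_{3i}$ are nonetheless the correct partials of $F_n$ itself (each of those variables appears in only one monomial of $F_n$), so the dependence relation and the rest of the argument are sound.
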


\begin{lemma}\label{psiinj}

Let $n=3k+1$ and let $C_k$ be the chain of lines from Proposition \ref{chainlines}. Let $\mathcal J\in  \underline{\mathrm{VSP}}(F_n, n+1)$. 
Then: 
\begin{enumerate}
\item[(i)] there exist forms $q_1,\ldots, q_s$ in the variables $y_{3h+1}$'s such that $\mathcal J = \mathrm{Ann}(F)_2 + \langle q_1,\ldots, q_s\rangle$; 

\item[(ii)] the ideal $\mathcal J^{sat}$ defines a projective scheme that is a zero-dimensional scheme of length $n+1$ supported on $C_k$;

\item[(iii)] each form $q_j$ has degree at most $n+1$; 

\item[(iv)] there exists a proper map 
\[
\psi_k: \underline{\mathrm{VSP}}(F_n, n+1)\rightarrow \mathrm{Hilb}_{n+1}(C_k),
\]
where the latter is the Hilbert scheme of zero-dimensional schemes of length $n+1$ supported on the reducible curve $C_k\subset \PP^{n}$. 
\end{enumerate}
\begin{proof}
(i). From the description of $\mathcal I = \mathrm{Ann}(F)_2$ in the proof of Proposition \ref{chainlines}, it is straightforward to see that each degree $d\geq 4$ graded piece of the quotient ring $T/\mathcal I$ has a basis of monomials $\lbrace m_1,\ldots, m_h\rbrace$, where each $m_j$ is a monomial in two variables $y_{3h-2}$ and $y_{3h+1}$; each such a pair of variables corresponds to a unique line in the chain $C_k$. Since a necessary condition for membership of an ideal $\mathcal J$ in $\underline{\mathrm{VSP}}(F_n, n+1)$ is possessing a generic Hilbert function, we add forms $q_j$ (in the variables $y_{3h+1}$'s) to $\mathcal J$ until the ideal reaches a generic Hilbert function. \\
(ii). Let $\mathcal I = \mathrm{Ann}(F)_2$. Note that the Hilbert polynomial of $\mathcal J^{sat}$ is $n+1$. Moreover, $\mathcal J^{sat}\supset \mathcal{I}^{sat}$. By Proposition \ref{chainlines}, $\mathcal{I}^{sat}$ is the ideal defining the projective scheme $C_k$. Therefore $\mathcal J^{sat}$ defines a
zero-dimensional scheme of length $n+1$ supported on $C_k$. \\
(iii). The Castelnuovo-Mumford regularity of a zero-dimensional scheme of length $n+1$ is at most $n+1$ \cite[Theorem 1.69]{ik}, 
and the degrees of the generators $q_j$ are bounded above by the regularity. \\
(iv). By (ii), 
the projective scheme defined by $\mathcal J^{sat}$ is a zero-dimensional scheme of length $n+1$ supported on $C_k$. Define the map:
\[
\psi_k: \underline{\mathrm{VSP}}(F_n, n+1)\rightarrow \mathrm{Hilb}_{n+1}(C_k),
\]
\[
\mathcal J \mapsto \mathcal J^{sat}. 
\]
Then this is a well-defined morphism. It is proper because it is projective.
\end{proof}
\end{lemma}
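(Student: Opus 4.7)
The plan is to prove the four items in sequence, each leveraging the previous. I write $\mathcal{I} := \mathrm{Ann}(F_n)_2$ throughout. First I would establish the key reduction $\mathcal{J}_2 = \mathcal{I}$: the containment $\mathcal{J}_2 \subseteq \mathrm{Ann}(F_n)_2 = \mathcal{I}$ is automatic from the definition of $\underline{\mathrm{VSP}}$, and the reverse comes from a dimension count. Since $\mathcal{J}$ has the generic Hilbert function of $n+1$ points, $\dim \mathcal{J}_2 = \binom{n+2}{2} - (n+1)$; since $F_n$ is concise, of minimal border rank, and has vanishing Hessian, the Hilbert function of $T/\mathrm{Ann}(F_n)$ is $1,\, n+1,\, n+1,\, 1$ (as computed in the proof of Theorem \ref{hessvswild}), so $\dim \mathcal{I} = \binom{n+2}{2} - (n+1)$ as well, and equality is forced.

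For item (i), I would then analyze $(T/\mathcal{I})_d$ for $d \geq 3$ monomial-by-monomial, using the explicit quadric generators of $\mathcal{I}$ that come from the block structure $x_{3i-3} x_{3i-2}^2 + x_{3i-2} x_{3i-1} x_{3i+1} + x_{3i} x_{3i+1}^2$ of $F_n$. The goal is to show that in each sufficiently large degree a monomial basis of $(T/\mathcal{I})_d$ lies inside the subring $\CC[y_1, y_4, \ldots, y_{3k+1}]$; Proposition \ref{chainlines} already tells us that every other variable becomes $0$ in $T/\mathcal{I}^{sat}$, and the remaining work is to lift this from $\mathcal{I}^{sat}$ to $\mathcal{I}$ in high enough degree. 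Once this is in place, I would argue inductively, raising the degree and introducing forms $q_j$ in the chain variables to cut the Hilbert function down to $n+1$, yielding the description $\mathcal{J} = \mathcal{I} + \langle q_1,\ldots,q_s\rangle$. Item (ii) follows quickly: since $\mathcal{I} \subset \mathcal{J}$, we have $\mathcal{I}^{sat} \subset \mathcal{J}^{sat}$, and Proposition \ref{chainlines} identifies $V(\mathcal{I}^{sat}) = C_k$; as $\mathcal{J}^{sat}$ inherits the constant Hilbert polynomial $n+1$ from $\mathcal{J}$, it cuts out a zero-dimensional subscheme of $C_k$ of length $n+1$. Item (iii) is immediate from the standard bound $\reg(\mathcal{J}^{sat}) \leq n+1$ for a length-$(n+1)$ scheme in \cite[Theorem 1.69]{ik}, since minimal generators of a saturated ideal have degree bounded by the regularity.

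For item (iv), the map $\psi_k \colon \mathcal{J} \mapsto \mathcal{J}^{sat}$ is well-defined by (ii), it lands in $\mathrm{Hilb}_{n+1}(C_k)$, and it is a morphism because saturation is induced by a universal construction on the multigraded Hilbert scheme. Properness follows because both source and target are closed subschemes of projective Hilbert-type schemes. The main obstacle is item (i): rigorously verifying that every monomial of $T_d$ involving some variable $y_j$ with $j \not\equiv 1 \pmod 3$ is congruent modulo $\mathcal{I}$ to a polynomial in the chain variables, for all $d \geq 4$. This demands a combinatorial propagation argument that repeatedly uses the explicit quadric relations in $\mathcal{I}$ to push away the excess variables — in effect, promoting the vanishing in $T/\mathcal{I}^{sat}$ from the saturated level to $T/\mathcal{I}$ once enough total degree is available to absorb the saturation exponents.
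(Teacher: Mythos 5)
Your plan follows the paper's proof step for step: you reduce to $\mathcal J_2 = \mathcal I := \mathrm{Ann}(F_n)_2$ by a dimension count (the paper does this silently, using that $\mathcal J$ has the generic Hilbert function of $n+1$ points), you derive (ii) from $\mathcal I^{sat}\subset\mathcal J^{sat}$ and Proposition~\ref{chainlines}, you get (iii) from the regularity bound in \cite[Theorem 1.69]{ik}, and you define $\psi_k$ by saturation and invoke projectivity of the source for properness, exactly as the paper does.

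The one place where your proposal is still a plan rather than a proof is the claim you yourself single out as the ``main obstacle'' in (i). The paper dismisses it as ``straightforward to see,'' and indeed it is not a deep propagation argument: in degree $d\geq 4$ one does not merely show non-chain monomials are congruent to chain-variable polynomials, one shows they lie in $\mathcal I$ outright, using only the degree-two monomial generators already recorded in the proof of Proposition~\ref{chainlines}. Concretely, if $m=y_j m'$ with $j\equiv 0\ (\mathrm{mod}\ 3)$ and $\deg m'\geq 3$, then either $m'$ involves some $y_i$ with $i\neq j+1$ (and $y_jy_i\in\mathcal I$), or $m'=y_{j+1}^{\deg m'}$ with $\deg m'\geq 3$ (and $y_jy_{j+1}^3\in\mathcal I$); the case $j\equiv 2\ (\mathrm{mod}\ 3)$ is analogous using $y_j^2, y_jy_{j-1}^2, y_jy_{j+2}^2, y_jy_i\in\mathcal I$. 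Together with $y_{3h+1}y_{3(h+s)+1}\in\mathcal I$ for $s\geq 2$, this shows $(T/\mathcal I)_d$ is \emph{spanned} (which is all one needs — any spanning set of monomials contains a basis) by monomials in a single adjacent pair $y_{3h-2},y_{3h+1}$. So your approach is correct and the gap you flagged is easy to close; I'd just caution that the bound $d\geq 4$ is essential, since in degree $3$ monomials such as $y_0y_1^2$ are nonzero mod $\mathcal I$, and that your phrasing about ``promoting vanishing from $T/\mathcal I^{sat}$ to $T/\mathcal I$'' is a heavier framing than needed — the relevant degree-two relations already generate $\mathcal I$, so no appeal to saturation exponents is required.
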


\begin{theorem}\label{vspreducible}
Let $n=3k+1\geq 10$. The variety $\underline{\mathrm{VSP}}(F_n, n+1)$ is reducible. 
\begin{proof}
Since $\psi_k$ is proper by Lemma \ref{psiinj}(iv), if $\underline{\mathrm{VSP}}(F_n, n+1)$ were irreducible, then the image $\psi_k\left(\underline{\mathrm{VSP}}(F_n, n+1)\right)$ would be closed and irreducible. We show next that $\psi_k\left(\underline{\mathrm{VSP}}(F_n, n+1)\right)$ has at least two irreducible components.

As in proof of Proposition~\ref{borderankFn}, $F_n$ is in the span of the affine cones of Zariski tangent spaces of $\nu_3(\PP^n)$ at the following points:
\[
 \ \ \ \ \ \ell_{1,2}^3, \ \  \ell_{1,3}^3, \ \ \ell_{1,4}^3,
\]
\[
 \ \ \ \ \ \ell_{2,2}^3, \ \ \ell_{2,3}^3, \ \ \ell_{2,4}^3, 
\]
\[
\cdots \ \ \cdots \ \ \cdots
\]
\[
\ell_{a,1}^3, \ \ \ell_{a,2}^3, \ \  \ell_{a,3}^3, \ \ \ell_{a,4}^3,
\]
\[
\cdots \ \ \cdots \ \ \cdots
\]
\[
\ell_{b,1}^3, \ \ \ell_{b,2}^3, \ \ \ell_{b,3}^3, \ \ \ell_{b,4}^3,
\]
\[
\cdots \ \ \cdots \ \ \cdots
\]
\[
\ \ \ \ \ \ \ell_{k,2}^3, \ \  \ell_{k,3}^3, \ \ \ell_{k,4}^3.
\]
Here we generalize the original configuration described in Proposition \ref{borderankFn}, where the forms $\ell_{1,1}^3$ and $\ell_{k,1}^3$ are replaced by $\ell_{a,1}^3$ and $\ell_{b,1}^3$, with $1 \leq a < b \leq n-3$ and $a \equiv b \equiv 1 \ (\text{mod } 3)$. Note that, if $n \geq 10$, there are at least two such pairs $(a,b)$. 
To see that the forms $\ell_{i,j}^3$ above are linearly dependent, perform the same procedure presented in the proof of Proposition \ref{borderankFn}, 
starting from the $a$-th component and ending at the $b$-th component of $C_k$. This produces a linear relation among the $\ell_{i,j}^3$ with $a\leq i\leq b$. 

Thus, by Corollary \ref{tangentcone1} and Theorem \ref{correspondence}, we may find a border rank decomposition given by a family of schemes $R(t)$ over the base $\mathrm{Spec}(\CC[t^{\pm}])$, whose limiting scheme is supported on the points $\ell_{i,j}^3$ above, with a corresponding ideal $\mathcal J_{(a,b)}\in \underline{\mathrm{VSP}}(F_n, n+1)$. Thus $\psi_k(\mathcal J_{(a,b)})\in \mathrm{Hilb}_{n+1}(C_k)$. 

As before, let $C_k^{h}$ denote the $h$-th component of $C_k$. Let $\mathrm{Hilb}_{n+1}(C_k)^{(a,b)}$ be the irreducible component of $\mathrm{Hilb}_{n+1}(C_k)$
defined by: 

\[
\mathrm{Hilb}_{n+1}(C_k)^{(a,b)} = \overline{\lbrace Z\subset C_k \ | \ Z \mbox{ smooth, } Z\cap C_k^{a} = Z\cap C_k^{b} = 4, Z\cap C_k^{j}=3, j\neq a,b\rbrace}. 
\]
\vspace{1mm}

Consider the components $\mathrm{Hilb}_{n+1}(C_k)^{(a,b)}$. Notice that $\dim \mathrm{Hilb}_{n+1}(C_k)^{(a,b)} = 3k+2 = n+1$. To see this, let 
$Z\in \mathrm{Hilb}_{n+1}(C_k)^{(a,b)}$ be a general point and so $Z$ is a smooth zero-dimensional scheme. The normal bundle $N_{Z/C_k}$ has $n+1$ global sections (an affine coordinate at each smooth point of $Z$). Therefore 
\[
n+1 = h^0(N_{Z/C_k}) = \dim T_{Z} \ \mathrm{Hilb}_{n+1}(C_k) =   
\]
\[
 = \dim T_{Z} \ \mathrm{Hilb}_{n+1}(C_k)^{(a,b)}
= \dim \mathrm{Hilb}_{n+1}(C_k)^{(a,b)}.
\]

Next, we show that the components $\mathrm{Hilb}_{n+1}(C_k)^{(1,2)}$ and $\mathrm{Hilb}_{n+1}(C_k)^{(1,3)}$ are distinct. 
The general element $Z\in \mathrm{Hilb}_{n+1}(C_k)^{(1,2)}$ is such that $Z\cap C_k^{2}$ is a smooth scheme of length four supported
outside $C_k^{1}\cap C_k^{2}$ and $C_k^{2}\cap C_k^{3}$. 

However, any flat limit $R$ of zero-dimensional schemes $R(t) \in \mathrm{Hilb}_{n+1}(C_k)^{(1,3)}$,
with $\mathrm{length}(R\cap C_k^{2})=4$, is such that 
\[
\mbox{ \emph{either} } \ \mathrm{Supp}(R)\cap C_k^{1}\cap C_k^{2} \neq \emptyset \ \mbox{ {\it or} } \ \mathrm{Supp}(R)\cap C_k^{2}\cap C_k^{3} \neq \emptyset. 
\]
Thus $Z$ cannot be in the component $\mathrm{Hilb}_{n+1}(C_k)^{(1,3)}$. 

Since $\mathrm{Hilb}_{n+1}(C_k)^{(1,2)}$ and $\mathrm{Hilb}_{n+1}(C_k)^{(1,3)}$ have the same maximal dimension, they are two distinct irreducible components of the Hilbert scheme $\mathrm{Hilb}_{n+1}(C_k)$. As the image $\psi_k(\underline{\mathrm{VSP}}(F_n, n+1))$ is closed, it contains both of them; therefore the image is reducible. In conclusion, $\underline{\mathrm{VSP}}(F_n, n+1)$ must be reducible. 
\end{proof}
\end{theorem}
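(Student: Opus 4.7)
The plan is to exploit the proper morphism $\psi_k\colon \underline{\mathrm{VSP}}(F_n,n+1)\to \mathrm{Hilb}_{n+1}(C_k)$ from Lemma~\ref{psiinj}(iv). Since a proper image of an irreducible scheme is closed and irreducible, it suffices to exhibit at least two distinct irreducible components of $\mathrm{Hilb}_{n+1}(C_k)$, each meeting the image $\psi_k(\underline{\mathrm{VSP}}(F_n,n+1))$. The combinatorial gain granted by the hypothesis $n\geq 10$ (equivalently $k\geq 3$) is exactly that we have at least two admissible distributions of $n+1$ points over the $k$ components of the chain $C_k$ arising from genuine border rank decompositions of $F_n$.

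First, I would generalize the tangent-space configuration used in the proof of Proposition~\ref{borderankFn}. Fix a pair $(a,b)$ with $1\leq a<b\leq k$ and put four points (instead of three) on the $a$-th and the $b$-th components $C_k^a,C_k^b$, keeping three points on every other component $C_k^i$. Imitating the block-by-block cubic decomposition $H_i=x_{3i-3}x_{3i-2}^2+x_{3i-2}x_{3i-1}x_{3i+1}+x_{3i}x_{3i+1}^2$, one verifies that $F_n$ lies in the span of the affine cones of the Zariski tangent spaces of $\nu_3(\PP^n)$ at these $n+1$ cubes $\ell_{i,j}^3$, while the linear dependence of these cubes is produced by running the elimination argument from Proposition~\ref{borderankFn} only between the $a$-th and $b$-th components of $C_k$. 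Invoking Proposition~\ref{tangentcones}, Corollary~\ref{tangentcone1}, and Theorem~\ref{correspondence}, each pair $(a,b)$ furnishes an ideal $\mathcal J_{(a,b)}\in \underline{\mathrm{VSP}}(F_n,n+1)$ whose image $\psi_k(\mathcal J_{(a,b)})$ is a smooth length-$(n+1)$ subscheme of $C_k$ distributed as $(3,\ldots,3,4,3,\ldots,3,4,3,\ldots,3)$.

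Next, for each admissible $(a,b)$, define
\[
\mathrm{Hilb}_{n+1}(C_k)^{(a,b)}=\overline{\{Z\subset C_k \ |\ Z\ \text{smooth},\ \mathrm{length}(Z\cap C_k^a)=\mathrm{length}(Z\cap C_k^b)=4,\ \mathrm{length}(Z\cap C_k^j)=3\ \text{for}\ j\neq a,b\}}.
\]
A normal-bundle computation on the smooth locus shows $\dim \mathrm{Hilb}_{n+1}(C_k)^{(a,b)}=n+1$, so these are genuine irreducible components of $\mathrm{Hilb}_{n+1}(C_k)$ whenever they are pairwise distinct. I then pick two admissible pairs, say $(1,2)$ and $(1,3)$, and argue that $\mathrm{Hilb}_{n+1}(C_k)^{(1,2)}\neq \mathrm{Hilb}_{n+1}(C_k)^{(1,3)}$: a general $Z$ in the first has four distinct reduced points on $C_k^2$ avoiding the two nodes $C_k^1\cap C_k^2$ and $C_k^2\cap C_k^3$, while any flat limit of schemes in $\mathrm{Hilb}_{n+1}(C_k)^{(1,3)}$ that accumulates length $4$ on $C_k^2$ must concentrate some mass at a node of $C_k^2$ (because only three points were available on $C_k^2$ before degeneration, and the extra length can only flow in from the adjacent components $C_k^1$ or $C_k^3$, hence through a node).

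The main obstacle is this last separation-of-components argument, i.e.~controlling which length profiles arise as flat limits inside $\mathrm{Hilb}_{n+1}(C_k)$; everything else is a fairly mechanical extension of constructions already in the paper. Once the two components are shown to be distinct, both contain images of elements of $\underline{\mathrm{VSP}}(F_n,n+1)$ by the construction above, so $\psi_k(\underline{\mathrm{VSP}}(F_n,n+1))$ is a closed subset containing two distinct top-dimensional irreducible subvarieties. Hence the image is reducible, and therefore $\underline{\mathrm{VSP}}(F_n,n+1)$ itself is reducible, completing the proof.
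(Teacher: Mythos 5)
Your proposal follows essentially the same approach as the paper's proof: you propagate the chain-of-lines configuration to allow two extra points on any two components $C_k^a$, $C_k^b$, use Corollary~\ref{tangentcone1} and Theorem~\ref{correspondence} to realize these as images of ideals in the border VSP, define the loci $\mathrm{Hilb}_{n+1}(C_k)^{(a,b)}$ with the same normal-bundle dimension count $h^0(N_{Z/C_k})=n+1$, and separate $\mathrm{Hilb}_{n+1}(C_k)^{(1,2)}$ from $\mathrm{Hilb}_{n+1}(C_k)^{(1,3)}$ via the flat-limit argument at the nodes of $C_k$. The only cosmetic difference is that you index the pairs $(a,b)$ by component labels $1\leq a<b\leq k$ rather than by the residue condition $a\equiv b\equiv 1\ (\text{mod }3)$ used in the paper, which amounts to the same bookkeeping.
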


\begin{remark}
The variety $\underline{\mathrm{VSP}}(F_7, 8)$ is  at least $8$-dimensional. Let $\mathcal I = \mathrm{Ann}(F)_2$. One can check there is a generically injective map 
\[
\rho: \underline{\mathrm{VSP}}(F_7, 8) \longrightarrow \PP\left((T/\mathcal I)_4\right)= \PP^8, 
\]
\[
\mathcal J\longmapsto \mathcal J_4,
\]
i.e. the generic fiber of $\rho$ is a single point. We do not know whether $\underline{\mathrm{VSP}}(F_7, 8)$ is irreducible or not. The locus where the morphism is injective is then birational to $\PP^8$. If it is irreducible and $8$-dimensional, it cannot be isomorphic to $\PP^8$: the isomorphic fibers $\rho^{-1}(x_1^4)$ and $\rho^{-1}(x_7^4)$ both contain a linear space $\PP^4$. Thus these two linear spaces do not intersect. 
\end{remark}

\begin{question}
Is $\underline{\mathrm{VSP}}(F_7, 8)$ irreducible? Are the irreducible components of the border varieties $\underline{\mathrm{VSP}}(F_n, n+1)$ rational? More generally, it would be interesting to analyze rationality and unirationality of (the irreducible components of) border varieties of sums of powers $\underline{\mathrm{VSP}}$'s alike in the context of $\mathrm{VSP}$'s; see for instance \cite{mm} for several results in this direction. 
\end{question}

\end{document}